\theoremstyle{plain}
\renewcommand\thefigure{\thesection.\@arabic\c@figure}
\renewcommand{\thefigure}{\arabic{section}.\arabic{figure}}
\newtheorem{thm}{\bf Theorem}
\newenvironment{theorem}{\begin{thm}} {\end{thm}}
\newtheorem{cor}{\bf Corollary}
\newtheorem{lmm}{\bf Lemma}
\newenvironment{lemma}{\begin{lmm}}{\end{lmm}}
\theoremstyle{example}
\theoremstyle{assumption}
\newtheorem{assumption}{\bf Assumption}[section]
\theoremstyle{remark}
\newtheorem{rem}{\bf Remark}[section]
\theoremstyle{definition}
\numberwithin{table}{section}
\newcommand{\Bu}{{\boldsymbol{u}}}
\newcommand{\Bg}{{\boldsymbol{g}}}
\newcommand{\Be}{{\boldsymbol{e}}}
\newcommand{\Bv}{{\boldsymbol{v}}}
\newcommand{\Bw}{{\boldsymbol{w}}}
\newcommand{\Bx}{{\boldsymbol{x}}}
\renewcommand \wedge \times
\begin{document}
\title{\bf{}}
\title[A novel scheme for SNS equation] {An efficient fully explicit scheme for stochastic Navier-Stokes equations driven by multiplicative noise}%\thanks{This research is supported in part by NSFC grant 12271457.}
\author[
C. Huang   \; $\&$ \; W.W. Wang\; $\&$ \;  C.J. Xu
]{
	\;\; Can Huang${}^1$ \;\; and \;\; Weiwen Wang${}^2$ \;\; and\;\; Chuanju Xu${}^{1}$
}

\thanks{${}^1$School of Mathematical Sciences, Xiamen university and Fujian Provincial Key Laboratory on Mathematical Modeling \& High Performance Scientific Computing, Xiamen University, Fujian 361005, China. 
	\\
	\indent
	${}^2${School of Mathematics and Statistics, Guangdong University of Technology, Guangdong, Guangzhou, 510520, China.}
}

\begin{abstract}
	This work proposes an efficient, linear, and fully decoupled pressure-correction scheme 
	for the 2D stochastic Navier-Stokes equations with multiplicative noise and Dirichlet boundary condition. Leveraging the auxiliary variable approach, the scheme is fully explicit yet unconditionally stable. At each time step, it only requires solving Poisson-type equations with constant coefficients. To the best of our knowledge, this is the first application of the auxiliary variable method to stochastic Navier-Stokes equations. We provide a detailed strong convergence analysis for the linearized equation under standard assumptions.
\end{abstract}

\keywords{Stochastic Navier-Stokes equations, auxiliary variable, 
	full discretization, convergence rate}

\subjclass[2010]{60H15, 76D05, 65N12, 41A25}

\maketitle

\section{Introduction}
Let $(\Omega, \mathcal{F}, \{\mathcal{F}_t\}_{t\geq 0}, \mathbf{P})$ be a filtered probability space. In this article,
we investigate a numerical approximation of the following 2D Navier-Stokes (NS) equation perturbed by multiplicative  
noise:
\begin{equation}\label{model:NS}
	\begin{cases}
		\displaystyle{d{\Bu}=\Delta {\Bu}dt-[\Bu\cdot\nabla]\Bu dt-\nabla p dt+G({\Bu})d{ W}(t), \;\; a.s. \  \Bx\in \mathcal{O}=[0,1]^2},\\
		\text{div}\  {\Bu}=0, \;\; \ a.s.,\\
		{\Bu}(0)=\Bu_0,\;\; \ a.s..
	\end{cases}
\end{equation}
Here, $(\Bu,p)$ is the unknown velocity and pressure, $W(t)$ is a perturbed noise, and $G(\Bu)$ is a Nemytskii operator induced by function $g(\Bu)$:
$G(\Bu)\chi(\Bx)=g(\Bu)(\Bx)\chi(\Bx)$ \cite[Lemma 10.24]{LordPS14}.
Both $W(t)$ and $g$ will be specified in the next section. We are particularly interested in the setting of homogeneous Dirichlet boundary condition, due to its related error analysis is more difficult compared to the periodic situation since the equality $\int_\mathcal{O} (\Bu\cdot\nabla)\Bu\cdot \Delta \Bu dx=0$ is no longer available  \cite{BreitP24}. 

It is well-known that ``turbulence in dimensions two and three should be described by the Navier-Stokes equations with a random force
goes back to A. N. Kolmogorov" \cite{KuksinS12}.
In particular, Hairer and Mattingly \cite{HairerM06} proved that the stochastic NS equations on the two-dimensional torus
with finite-dimensional, additive noise have ergodic dynamics and estimated the rate
of convergence to the invariant measure. Inspired by it,
an arsenal of efforts have been devoted to numerical simulation of stochastic NS equation (cf. \cite{BessaihM22, BreitD21, BreitD24, BreitP24, BrzezniakCP13, CarelliP12, Dorsek12, FengV22, HausenblasR19, KukavicaUZ18}). Notably, most of these studies focus on the equation with periodic boundary condition in space, which serves as one of the key motivations for the present work.

Even for the stochastic Navier-Stokes equations with periodic boundary condition, 
the primary challenges in numerical simulation stem from the convection term, 
the noise term, and their interplay. 
A direct consequence of these complexities is the limited regularity of both the velocity and pressure fields. To address these issues, fully implicit methods, which, to some extent, facilitate convergence analysis, are commonly adopted \cite{BessaihM22, BreitD21, BreitD24, BrzezniakCP13, CarelliP12, FengV22, HausenblasR19, KukavicaUZ18}. The standard strategy for ensuring convergence involves localizing the convection term over a space of large probability, thereby obtaining a linear inequality in expectation, which can then be bounded using Gronwall's inequality. This approach ultimately yields a convergence rate in probability.
Notably, under sufficiently smooth noise assumptions, Feng et al. \cite{FengV22} circumvent the localization technique by reversing the order of taking expectations and applying Gronwall's inequality. Their method requires proving exponential integrability of the numerical solutions, rather than relying only on bounds for the usual $p$-th order moments.
Despite significant progress on periodic boundary problems, numerical simulations of \eqref{model:NS} remain relatively scarce (cf. \cite{Breckner00, BreitP23, Doghman24, LiXZ25, OndrejatPW23}). In these studies, the convection term is treated either fully implicitly or in a linearized fashion.

As can be readily observed, a direct consequence of fully implicit methods 
is the necessity of solving a nonlinear system at each time step. 
The existence and uniqueness of solutions to such nonlinear systems,
especially under stochastic perturbations, remain unclear, and identifying the correct solution poses a significant challenge. In this context, linearized methods emerge as a natural alternative. These methods require solving a coupled elliptic system with variable coefficients instead \cite{BessaihM22, BrzezniakCP13, CarelliP12}, and their convergence rates in probability are typically established by comparison with the fully implicit schemes mentioned above \cite{BessaihM22, CarelliP12}. The simplest alternative, however, lies in explicit methods, which reduce each time step to solving a stochastic Stokes system --- though this comes at the cost of a stringent CFL condition.

To balance the strengths and limitations of the three types of methods discussed above, 
we propose a novel pressure-correction scheme for \eqref{model:NS} based on stochastic auxiliary variables. Inspired by \cite{LiSL21}, our scheme treats the convection term in a fully explicit manner using an auxiliary variable while preserving unconditional stability (Lemma \ref{lem:sta}). Combined with a pressure-correction step, the resulting method is linear and decoupled. It is noteworthy that our strategy for managing the stochastic term $G(\Bu) dW(t)$ 
in \eqref{model:NS} involves introducing a second auxiliary variable to counterbalance its random effects. 
As will be shown, both auxiliary variables evolve as stochastic processes with constant mean and small variance.

Owing to the combined use of two auxiliary variables and the pressure-correction technique, 
the proposed scheme exhibits the following notable features:

$\bullet$ The method requires solving only a sequence of Poisson-type equations with constant coefficients, along with a positive-definite $2 \times 2$ linear algebraic system at each time step, regardless of boundary condition. This leads to a substantial improvement in computational efficiency compared to the fully implicit or semi-linear schemes discussed earlier.

$\bullet$ Despite treating the convection term explicitly, the scheme maintains unconditional stability in the $2m$-th moment sense for any $m \geq 1$.

$\bullet$ The auxiliary variable introduced for the stochastic term demonstrates an automatic and adaptive mean-reverting property, which contributes to the robustness of the method.

The remainder of this paper is structured as follows. Section 2 presents some preliminaries, including notations, properties of the convection term, and the assumptions used throughout the paper. In particular, we establish the exponential integrability of the velocity $\Bu$. Section 3 focuses on the auxiliary variable-based temporal semi-discretization and provides an analysis of its unconditional stability. In Section 4, we derive a rigorous strong convergence rate for the proposed scheme, based on the linearized equation \eqref{model:NS1}. Section 5 presents numerical results for the stochastic Navier-Stokes equations to validate the main theoretical results, and Section 6 is the concluding remarks. 

\section{Prerequisite results and Assumptions}

\subsection{Notations and spaces}
Let $(\mathbf{L}^2(\mathcal{O}), \|\cdot\|), (\mathbf{H}^k(\mathcal{O}), \|\cdot\|_k), (\mathbf{H}_0^k(\mathcal{O}), \|\cdot\|_k)$ be standard vector Sobolev space on $\mathbb{R}^d$.  Define the following two frequently used spaces associated with the model. 
$$
\mathbf{H}=\{\Bv\in \mathbf{L}^2(\mathcal{O}): \nabla\cdot \Bv=0\}, 
\;\; \mathbf{V}=\{\Bv\in \mathbf{H}^1(\mathcal{O}): \nabla\cdot \Bv=0\}. 
$$ 
Let $P_\mathbf{H}$ denote the Helmholtz-Leray projector on $\mathbf{H}$. It is well-known that $P_\mathbf{H}$ is stable in both $\mathbf{L}^2$ and $\mathbf{H}^1$ \cite[Page 251]{BoyerF12}. 

Denote the trilinear form by
$
b(\Bu,\Bv,\Bw)=(\Bu\cdot\nabla\Bv, \Bw).
$
It enjoys the following skew-symmetric and orthogonal properties:
\begin{align}
	&b(\Bu,\Bv, \Bw)=-b(\Bu,\Bw,\Bv),\ \ \ \Bu\in\mathbf{H}, \Bv,\Bw\in\mathbf{H}_0^1;\notag\\
	&b(\Bu,\Bv,\Bv)=0, \ \ \ \qquad\qquad \ \  \Bu\in\mathbf{H}, \ \Bv\in \mathbf{H}_{0}^1.
\end{align}
Furthermore, $b(\Bu,\Bv,\Bw)$ satisfies the following inequalities:
\begin{equation}\label{eq:bddb}
	b(\Bu,\Bv,\Bw)\leq 
	\begin{cases}
		c\|\Bu\|_2 \|\Bv\| \|\Bw\|_1,  \ \ \ \forall \Bu\in \mathbf{H}^2\cap\mathbf{H}, \ \Bv\in\mathbf{H}, \ \Bw\in \mathbf{H}_{0}^1; \\
		c\|\Bu\|_2 \|\Bv\|_1 \|\Bw\|,  \ \ \ \forall \Bu\in \mathbf{H}^2\cap\mathbf{H}, \ \Bv\in\mathbf{H}, \ \Bw\in \mathbf{H}_{0}^1; \\
		c\|\Bu\|_1 \|\Bv\|_2 \|\Bw\|,  \ \ \ \forall \Bv\in \mathbf{H}^2\cap\mathbf{H}, \ \Bu\in\mathbf{H}, \ \Bw\in \mathbf{H}_{0}^1; \\
		c\|\Bu\| \|\Bv\|_2 \|\Bw\|_1,  \ \ \ \forall \Bv\in \mathbf{H}^2\cap\mathbf{H}, \ \Bu\in\mathbf{H}, \ \Bw\in \mathbf{H}_{0}^1.
	\end{cases}
\end{equation}
We emphasize that in our numerical analysis, the property $b(\Bv,\Bv, A\Bv)=0$ where $A=-P_\mathbf{H}\Delta$ \cite[Chapter 6]{DaPrato12} will not be used. It is maintained under the restriction of the periodic boundary condition. 

Let $U$ and $V$ be two  Hilbert spaces.  We denote the norm in $L^p(\Omega, \mathcal{F}, \mathbf{P}; U)$ by $\|\cdot\|_{L^p(\Omega; U)}$, that is,
$$
\|Y\|_{L^p(\Omega;U)}=\big(\mathbf{E}\big[\|Y\|_U^p\big]\big)^{\frac{1}{p}},\; \; Y\in L^p(\Omega, \mathcal{F}, \mathbf{P}; U).
$$
Denote by $\mathcal{L}_1(U,V)$ the nuclear operator space from $U$ to $V$ and for $T\in \mathcal{L}_1(U,V)$, its norm is given by
$$
\|T\|_{\mathcal{L}_1}=\sum\limits_{i=1}^\infty |(Te_i,e_i)_U|\ \;\text{and}\;\; Tr(T)=\sum\limits_{i=1}^\infty (Te_i,e_i)_U
$$
for any orthonormal basis $\{e_i\}$ of $U$. Let $\mathcal{L}_2(U,V)$ be the Hilbert-Schmidt space such that for any $T\in \mathcal{L}_2(U,V)$
$$
\|T\|_{\mathcal{L}_2(U,V)}=\|T\|_{HS}=\bigg(\sum\limits_{i=1}^\infty\|Te_i\|^2\bigg)^{1/2}<\infty.
$$

Following \cite{Kruse14}, we introduce the space $U_0:=Q^{\frac{1}{2}}(U)$ together with its inner product
$$
(u_0, v_0)_{U_0}=(Q^{-\frac{1}{2}}u_0, Q^{-\frac{1}{2}}v_0)_U, \ \ \ \forall u_0, v_0\in U_0.
$$
Furthermore, we  introduce  the notations $\mathcal{L}_2^0:=\mathcal{L}_2(U_0, H)$ with its norm
$$
\|T\|_{\mathcal{L}_2^0}=\|TQ^{\frac{1}{2}}\|_{\mathcal{L}_2(U,H)},\ \ \ \ \forall T\in \mathcal{L}(U,H).
$$

\subsection{Q-Wiener process} For a Hilbert space $\mathbf{K}$, define a $\mathbf{K}$-valued wiener process by 
$$
W(t)=\sum\limits_{j=1}^\infty \sqrt{q_j}e_j\beta_j(t),
$$
where $\beta_j(t)$ are standard Brownian motion, and $(q_j, e_j)_{j=1}^\infty$ are eigenpairs of a given symmetric and positive-definite operator $Q$.   
In this work, we shall restrict ourselves to the so-called trace class noise, that is, $\text{tr}(Q)=\sum\limits_{j=1}^\infty q_j<\infty$. 

We shall frequently use the following standard Burkholder-Davis-Gundy (BDG) inequality  and the BDG inequality 
for a sequence of $L^2$-valued discrete martingales.
\begin{lemma}\label{lem:BDG} \cite{Kruse14}
	For any $p\geq 2, 0\leq \tau_1\leq \tau_2\leq T$, and for any predictable stochastic process $\Phi:[0,T]\times\Omega\to \mathcal{L}_2^0$, which satisfies
	$$
	\mathbf{E}\left(\int_{\tau_1}^{\tau_2}\|\Phi(t)\|^2_{\mathcal{L}_2^0}ds\right)^{\frac{p}{2}}<\infty.
	$$
	Then,
	\begin{align*}
		\mathbf{E} \left\|\int_{\tau_1}^{\tau_2}\Phi(t)dt\right\|^p\leq C_p\mathbf{E}\left(\int_{\tau_1}^{\tau_2}\|\Phi(t)\|^2_{\mathcal{L}_2^0}ds\right)^{\frac{p}{2}},
	\end{align*}
	where $C_2=1$ and
	$$C_p=\left(\frac{p}{2}(p-1)\right)^{\frac{p}{2}}\left(\frac{p}{p-1}\right)^{p(\frac{p}{2}-1)}, \  p>2.$$
\end{lemma}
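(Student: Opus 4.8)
The plan is to establish the inequality in its sharp form by coupling It\^o's formula for the $p$-th power of the norm with Doob's maximal inequality; I read $\int_{\tau_1}^{\tau_2}\Phi\,dt$ as the It\^o stochastic integral $\int_{\tau_1}^{\tau_2}\Phi\,dW$ against the $Q$-Wiener process, which is the reading that makes the right-hand side and the stated constant $C_p$ consistent. Write $M(t)=\int_{\tau_1}^{t}\Phi(s)\,dW(s)$, which under the integrability hypothesis is a continuous $H$-valued martingale with $M(\tau_1)=0$. For $p=2$ the claim is exactly the It\^o isometry, $\mathbf{E}\|M(\tau_2)\|^2=\mathbf{E}\int_{\tau_1}^{\tau_2}\|\Phi(s)\|_{\mathcal{L}_2^0}^2\,ds$, which gives $C_2=1$; so I concentrate on $p>2$.

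For $p>2$ the map $\phi(x)=\|x\|^p$ is $C^2$ on $H$ (its second derivative vanishes continuously at the origin), so It\^o's formula applies and yields
\begin{align*}
\|M(\tau_2)\|^p=\int_{\tau_1}^{\tau_2}p\|M\|^{p-2}\langle M,\Phi\,dW\rangle+\frac12\int_{\tau_1}^{\tau_2}\mathrm{tr}\!\left[\phi''(M)\,\Phi Q^{1/2}(\Phi Q^{1/2})^*\right]ds.
\end{align*}
The first integral is a martingale whose expectation vanishes. For the It\^o correction I use the pointwise bound $\phi''(x)(h,h)=p\|x\|^{p-2}\|h\|^2+p(p-2)\|x\|^{p-4}\langle x,h\rangle^2\le p(p-1)\|x\|^{p-2}\|h\|^2$ (Cauchy--Schwarz), and summing over an orthonormal basis gives $\mathrm{tr}[\phi''(M)\,\Phi Q^{1/2}(\Phi Q^{1/2})^*]\le p(p-1)\|M\|^{p-2}\|\Phi\|_{\mathcal{L}_2^0}^2$. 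Taking expectations therefore yields
\begin{align*}
\mathbf{E}\|M(\tau_2)\|^p\le\frac{p(p-1)}{2}\,\mathbf{E}\!\left[(M^*)^{p-2}\int_{\tau_1}^{\tau_2}\|\Phi(s)\|_{\mathcal{L}_2^0}^2\,ds\right],\qquad M^*:=\sup_{\tau_1\le s\le\tau_2}\|M(s)\|.
\end{align*}

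To close the estimate with the sharp constant I apply H\"older's inequality with exponents $\tfrac{p}{p-2}$ and $\tfrac{p}{2}$ to split off the $\mathcal{L}_2^0$-factor, and then Doob's $L^p$ maximal inequality $\mathbf{E}(M^*)^p\le(\tfrac{p}{p-1})^p\,\mathbf{E}\|M(\tau_2)\|^p$, valid since $\|M(\cdot)\|$ is a nonnegative submartingale. Writing $A=\mathbf{E}\|M(\tau_2)\|^p$ and $B=\mathbf{E}(\int_{\tau_1}^{\tau_2}\|\Phi\|_{\mathcal{L}_2^0}^2 ds)^{p/2}$, these two steps combine to $A\le\frac{p(p-1)}{2}(\frac{p}{p-1})^{p-2}A^{(p-2)/p}B^{2/p}$; dividing by $A^{(p-2)/p}$ and raising to the power $p/2$ produces precisely $C_p=(\frac{p(p-1)}{2})^{p/2}(\frac{p}{p-1})^{p(p/2-1)}$.

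The main obstacle is that the final division by $A^{(p-2)/p}$ is legitimate only once $A<\infty$ is known a priori; otherwise the argument is circular. I would remove this by localization: introduce the stopping times $\rho_n=\inf\{t\ge\tau_1:\|M(t)\|\ge n\}$, run the whole computation for the stopped martingale $M^n(t):=M(\min\{t,\rho_n\})$ (for which every quantity is finite and Doob applies), obtain $\mathbf{E}\|M^n(\tau_2)\|^p\le C_p\,B$ uniformly in $n$, and then let $n\to\infty$ using monotone convergence on the left and the hypothesis $B<\infty$ on the right. A secondary technical point, the mild non-smoothness of $\|x\|^p$ near the origin, can be handled in the standard way by applying It\^o's formula to the regularized potential $(\varepsilon+\|x\|^2)^{p/2}$ and passing to the limit $\varepsilon\downarrow0$.
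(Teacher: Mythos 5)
Your proposal is correct, and in fact proves more than the paper does: the paper states this lemma with a citation to \cite{Kruse14} and gives no proof at all, so what you have written is a self-contained reconstruction of the standard argument from that literature (It\^o's formula for $\|x\|^p$, the pointwise Hessian bound, H\"older, and Doob), which is exactly how the sharp-constant version is proved in the cited reference and in Da Prato--Zabczyk. Two remarks. First, your reading of the statement is the right one: the $dt$ on the left-hand side is a typo for $dW(t)$, since the $\mathcal{L}_2^0$-norm on the right is only meaningful for an It\^o integral against the $Q$-Wiener process, and your computation confirms this by reproducing the stated constant exactly, via $A\le \tfrac{p(p-1)}{2}\big(\tfrac{p}{p-1}\big)^{p-2}A^{(p-2)/p}B^{2/p}$ and hence $C_p=\big(\tfrac{p(p-1)}{2}\big)^{p/2}\big(\tfrac{p}{p-1}\big)^{p(p-2)/2}$, which coincides with $\big(\tfrac{p}{2}(p-1)\big)^{p/2}\big(\tfrac{p}{p-1}\big)^{p(p/2-1)}$. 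Your treatment of the two technical obstacles --- a priori finiteness of $A$ via stopping times, and the failure of $\|x\|^p$ to be $C^2$ at the origin for $2<p<4$ via the regularization $(\varepsilon+\|x\|^2)^{p/2}$ --- is the standard and correct remedy. Second, one small repair: after localization, $\|M^n(\tau_2)\|^p$ is not monotone in $n$, so ``monotone convergence on the left'' should be Fatou's lemma; alternatively, observe that the sequence is a.s.\ eventually constant, because $\rho_n>\tau_2$ as soon as $n$ exceeds the a.s.\ finite supremum of $\|M\|$ over $[\tau_1,\tau_2]$. This does not affect the conclusion.
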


\begin{lemma}\label{lem:dBDG} \cite{HutzenhalerJ11,LiuQ21}
	Let $p\geq 2$ and $\{Z_m\}$ be a sequence of $L^2$-valued random variables with bounded $p$-moments such that 
	$\mathbf{E}[Z_{m+1}|Z_0, \cdots,  Z_m] = 0$ for all $1\leq m\leq N-1$. Then there exists a constant $C=C(p)$ such that
	\begin{align}
		\bigg(\mathbf{E}\bigg\|\sum\limits_{i=0}^m Z_i\bigg\|^p\bigg)^{\frac{1}{p}}\leq C\bigg(\sum\limits_{i=0}^m(\mathbf{E}\|Z_i\|^p)^{\frac{2}{p}}\bigg)^{\frac{1}{2}}.
	\end{align}
\end{lemma}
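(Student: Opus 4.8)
The plan is to recognise that the hypothesis $\mathbf{E}[Z_{m+1}\mid Z_0,\dots,Z_m]=0$ makes $\{Z_i\}$ a martingale difference sequence with respect to the filtration $\mathcal{F}_m:=\sigma(Z_0,\dots,Z_m)$, so that the partial sums $S_m:=\sum_{i=0}^m Z_i$ form an $L^2(\mathcal{O})$-valued discrete martingale. The target estimate is then a discrete Burkholder--Davis--Gundy (square-function) inequality, and the whole argument splits into two essentially independent pieces: first controlling $\|S_m\|_{L^p(\Omega;L^2)}$ by the $L^p(\Omega)$-norm of the square function $\big(\sum_i\|Z_i\|^2\big)^{1/2}$, and then passing from that square function to the separated sum $\big(\sum_i(\mathbf{E}\|Z_i\|^p)^{2/p}\big)^{1/2}$ appearing on the right-hand side.

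First I would establish the square-function bound
$$\Big(\mathbf{E}\big\|S_m\big\|^p\Big)^{1/p}\le C_p\,\Big\|\big(\textstyle\sum_{i=0}^m\|Z_i\|^2\big)^{1/2}\Big\|_{L^p(\Omega)}.$$
For $p=2$ this is exact with $C_2=1$: expanding $\|S_m\|^2=\|S_{m-1}\|^2+2(S_{m-1},Z_m)+\|Z_m\|^2$ and taking conditional expectation kills the cross term because $\mathbf{E}[Z_m\mid\mathcal{F}_{m-1}]=0$, so by induction $\mathbf{E}\|S_m\|^2=\sum_{i=0}^m\mathbf{E}\|Z_i\|^2$. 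For general $p\ge2$ the mechanism is the $2$-smoothness (martingale type $2$) of the Hilbert space $L^2(\mathcal{O})$: I would use the elementary pointwise inequality, valid for $x,y$ in a Hilbert space and $p\ge2$,
$$\|x+y\|^p\le\|x\|^p+p\,\|x\|^{p-2}(x,y)+C_p\big(\|x\|^{p-2}\|y\|^2+\|y\|^p\big),$$
apply it with $x=S_{i-1}$, $y=Z_i$, take $\mathbf{E}[\,\cdot\mid\mathcal{F}_{i-1}]$ so that the linear term again vanishes, and close the resulting recursion by induction together with Young's inequality. This reproduces the estimate recorded in the cited references \cite{HutzenhalerJ11,LiuQ21}, equivalently Pinelis' bound for martingales in $2$-smooth spaces.

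With the square-function bound in hand, the second step is purely deterministic functional analysis. Since $p\ge2$, the exponent $p/2\ge1$, so $\|\cdot\|_{L^{p/2}(\Omega)}$ is a genuine norm and the triangle inequality applied to the nonnegative summands $\|Z_i\|^2$ gives
$$\Big\|\textstyle\sum_{i=0}^m\|Z_i\|^2\Big\|_{L^{p/2}(\Omega)}\le\sum_{i=0}^m\big\|\,\|Z_i\|^2\big\|_{L^{p/2}(\Omega)}=\sum_{i=0}^m\big(\mathbf{E}\|Z_i\|^p\big)^{2/p}.$$
Taking square roots and combining with the previous display yields exactly the claimed inequality with $C=C_p$.

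I expect the main obstacle to be the first step, namely upgrading the elementary $p=2$ orthogonality identity to the genuinely nonlinear $L^p$ square-function estimate: the difficulty is that $t\mapsto t^{p/2}$ is no longer affine, so one cannot simply sum conditional expectations and must instead exploit the $2$-smoothness of $L^2(\mathcal{O})$ to absorb the higher-order remainder terms $\|x\|^{p-2}\|y\|^2$ and $\|y\|^p$ through an inductive Young-inequality argument. The second (Minkowski) step is routine, and the martingale identification at the outset is immediate from the prescribed conditional-expectation hypothesis.
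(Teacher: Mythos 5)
The paper itself offers no proof to compare against: the lemma is quoted from \cite{HutzenhalerJ11,LiuQ21}. Measured against the standard derivation in those references, your two-step architecture is exactly right: identify $S_m=\sum_{i=0}^m Z_i$ as a Hilbert-space-valued martingale, prove an $L^p$ estimate via the $2$-smoothness inequality $\|x+y\|^p\le\|x\|^p+p\|x\|^{p-2}(x,y)+C_p(\|x\|^{p-2}\|y\|^2+\|y\|^p)$ with the linear term killed by $\mathbf{E}[\,\cdot\mid\mathcal{F}_{i-1}]$, and then pass to the separated sum by the triangle inequality in $L^{p/2}(\Omega)$. The Minkowski step and the martingale identification are correct as stated.

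There is, however, one step that fails as literally written: closing the recursion ``by induction together with Young's inequality.'' If Young is applied pointwise to the remainder, $\|S_{i-1}\|^{p-2}\|Z_i\|^2\le\epsilon\|S_{i-1}\|^p+C_\epsilon\|Z_i\|^p$, the iteration gives $\mathbf{E}\|S_m\|^p\le(1+C_p\epsilon)^m\big(\cdots\big)$, and no choice of $\epsilon$ removes the dependence on the number of steps: taking $\epsilon\sim 1/m$ tames the exponential factor but leaves $C_\epsilon\sim m^{p/2-1}$ in front of $\sum_i\mathbf{E}\|Z_i\|^p$, which is strictly weaker than the claimed bound whenever the moments $\mathbf{E}\|Z_i\|^p$ are unbalanced (e.g.\ concentrated on a single increment). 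This is not a cosmetic issue --- the whole content of the lemma is that $C=C(p)$ is independent of $m$, and the paper applies it with $m\sim T/\tau$ terms. The standard repair is to sum the recursion over $i$ and apply H\"older with exponents $\tfrac{p}{p-2}$ and $\tfrac{p}{2}$ to each cross term, $\mathbf{E}[\|S_{i-1}\|^{p-2}\|Z_i\|^2]\le\big(\max_{j\le m}\mathbf{E}\|S_j\|^p\big)^{(p-2)/p}\big(\mathbf{E}\|Z_i\|^p\big)^{2/p}$, then use Young once on the resulting single product and absorb $\max_{j\le m}\mathbf{E}\|S_j\|^p$ (finite by the bounded-moment hypothesis) into the left-hand side. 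Done this way you obtain the separated-sum estimate directly with a constant depending only on $p$, and as a bonus the genuine square-function bound of your first step --- which would additionally require Doob's maximal inequality to control $\max_{j}\|S_j\|$ --- can be bypassed altogether.
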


\subsection{Essential assumptions}
For the sake of convergence analysis and algorithm development, some assumptions are placed in order:
\begin{assumption}\label{assump:1}
	The function $g(\Bu)$ is uniformly bounded and Lipschitz continuous, i.e., there exists a constant $C_g>0$ such that
	\begin{align}
		&\|g(\Bu)\|_\infty \leq C_g,\ \ \; 
		\|g(\Bu)-g(\Bv)\| \leq C_g\|\Bu-\Bv\|,\notag\\
		& \|(-\Delta)^{\frac{1}{{2}}}g(\Bu)Q^{\frac{1}{2}}\|_{HS}\leq C_g,\ \ \ \Bu, \Bv\in \mathbf{H}. \label{assump1eq1}
	\end{align}
\end{assumption}
\begin{rem}
	Clearly, if $g(\Bu)=1$, \eqref{assump1eq1} is reduced to $\|(-\Delta)^{\frac{1}{{2}}}Q^{\frac{1}{2}}\|_{HS}\leq C_g$. This type of assumption is widely used in the literature (cf. \cite{BrehierCH19, KovacsLL11, KovacsLL15, LiMS25, QiW20}).
\end{rem}
\begin{rem} 
	Since $G(\Bu)$ is a Nemytskii operator induced by $g$ and $W(t)$ is of trace-class, one can easily have
	\begin{align}\label{eq:rem1}
		\|\Bg(\Bu)\|_{\mathcal{L}_2^0}^2&=\|g(\Bu)Q^{\frac{1}{2}}\|^2_{\mathcal{L}_2}=\sum\limits_{j=1}^\infty (g(\Bu)Q^{\frac{1}{2}}, e_j)^2
		{ \leq}\|g(\Bu)\|_\infty^2\sum\limits_{j=1}^\infty q_j \|e_j\|^2\leq C_g^2Tr(Q). 
	\end{align}
\end{rem}

\begin{assumption}\label{assump:2} 
	The initial condition 
	$
	\Bu_0\in \mathbf{V}. 
	$
\end{assumption}

Under the Assumptions \ref{assump:1} and \ref{assump:2}, there exists a strong solution $\Bu$ of \eqref{model:NS} such that (cf. \cite{BreitP24,Gourcy17})
\begin{align*}
	(\Bu(t),\boldsymbol{\phi})=(\Bu_0, \boldsymbol{\phi})-\int_0^t (\nabla \Bu(s), \nabla \boldsymbol{\phi})ds-\int_0^t ([\Bu(s)\cdot\nabla]\Bu(s), \boldsymbol{\phi})ds+\int_0^t (g(\Bu(s))dW(s), \boldsymbol{\phi}), \ \boldsymbol{\phi}\in \mathbf{V}.
\end{align*}

The pressure $p$ plays a crucial role in both our algorithm development and error analysis, we need to impose an assumption on it. 
\begin{assumption}\label{assump:3}
	For $m\geq 2$, 
	\begin{align}
		\mathbf{E}\|\nabla p(t)-\nabla p(s)\|^{\frac{m}{2}}\leq C|t-s|^\frac{\theta m}{4},  \forall \ 0\leq\theta<\frac{1}{2}.
	\end{align}
\end{assumption}

\begin{rem}
	This assumption appears to be a necessary trade-off for \eqref{model:NS} with Dirichlet boundary condition, and it is introduced solely for the purpose of error analysis. For the stochastic Navier-Stokes equations with additive noise under periodic boundary condition, the following continuity condition can be established (cf.\cite{HausenblasR19}): 
	\begin{align}
		\mathbf{E}\|p(t)-p(s)\|^{\frac{m}{2}}\leq C|t-s|^\frac{\theta m}{4},  \forall \ 0\leq\theta<\frac{1}{2}.
	\end{align}
\end{rem}
\subsection{Exponential integrability of velocity}

We take a Leray projection $P_\mathbf{H}$ on both sides of \eqref{model:NS} and still denote the main equation as
$$
d{\Bu}=\Delta {\Bu}dt-[\Bu\cdot\nabla]\Bu dt+g({\Bu})d{ W}(t), \;\; a.s. \  x\in \mathcal{O}=[0,1]^2.
$$

The following lemma will be used to obtain exponential integratibility of $\Bu$. 
\begin{lemma}\cite{CuiHL17} \label{lem:staCHL}
	Let $H$ be a Hilbert space, and let $X$ be an adapted $H$-valued stochastic process with continuous sample paths, satisfying 
	$X_t=X_0+\int_0^t \mu(X_r)dr+\int_0^t \sigma(X_r)dW_r$, for all $t\in [0,T]$, where almost surely 
	$\int_0^T \|\mu(X_t)\|+\|\sigma(X_t)\|^2dt<\infty$. Assume that there exist two functionals $\bar{V}$ and $V\in C^2(H; \mathcal{R})$  and an $\mathscr{F}_0$-measurable random variable $\alpha$, such that for almost sure $t\in [0,T]$, 
	$$
	DV(X_t)\mu(X_t)+\frac{1}{2}tr[D^2V(X_t)\sigma(X_t)\sigma^*(X_t)]+\frac{\|\sigma^*(X_t)DV(X_t)\|^2}{2e^{\alpha t}}+\bar{V}(X_t)\leq \alpha V(X_t).
	$$
	Then,
	$$
	\sup\limits_{t\in[0,T]}\mathbf{E} \bigg[\exp\bigg(\frac{V(X_t)}{e^{\alpha t}}+\int_0^t \frac{\bar{V}(X_r)}{e^{\alpha r}}dr\bigg)\bigg]\leq\mathbf{E}V(X_0).
	$$
\end{lemma}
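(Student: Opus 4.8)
The plan is to realize the exponential on the left-hand side as a non-negative local supermartingale and then pass through a localization argument. Concretely, set
\[
Y_t = \frac{V(X_t)}{e^{\alpha t}} + \int_0^t \frac{\bar{V}(X_r)}{e^{\alpha r}}\,dr,
\qquad \Psi_t = e^{Y_t},
\]
so that $\Psi_0 = \exp(V(X_0))$ and the desired conclusion becomes $\sup_{t\in[0,T]}\mathbf{E}[\Psi_t]\le \mathbf{E}[\Psi_0]$. Since $\alpha$ is $\mathscr{F}_0$-measurable, the factor $e^{-\alpha t}$ is an adapted process that is differentiable in $t$ with $\tfrac{d}{dt}e^{-\alpha t}=-\alpha e^{-\alpha t}$, treating $\alpha$ as a random constant; this lets all the subsequent Itô computations proceed as in the deterministic-$\alpha$ case.

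First I would compute $dY_t$ by combining the infinite-dimensional Itô formula for $V(X_t)$,
\[
dV(X_t)=\Big(DV(X_t)\mu(X_t)+\tfrac12\,\mathrm{tr}[D^2V(X_t)\sigma(X_t)\sigma^*(X_t)]\Big)dt+DV(X_t)\sigma(X_t)\,dW_t,
\]
with the product rule against $e^{-\alpha t}$ and the contribution $e^{-\alpha t}\bar{V}(X_t)\,dt$ of the integral term. Collecting terms, the drift of $Y_t$ is $e^{-\alpha t}\big[DV(X_t)\mu(X_t)+\tfrac12\mathrm{tr}(D^2V(X_t)\sigma(X_t)\sigma^*(X_t))+\bar{V}(X_t)-\alpha V(X_t)\big]$, while its martingale part is $\int_0^t e^{-\alpha r}DV(X_r)\sigma(X_r)\,dW_r$, whose quadratic variation satisfies $d\langle Y\rangle_t=e^{-2\alpha t}\|\sigma^*(X_t)DV(X_t)\|^2\,dt$.

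Next I would apply Itô's formula a second time to $\Psi_t=e^{Y_t}$, whose only additional contribution is the correction $\tfrac12\Psi_t\,d\langle Y\rangle_t$. Gathering everything, the drift of $\Psi_t$ equals
\[
\Psi_t\,e^{-\alpha t}\Big[DV(X_t)\mu(X_t)+\tfrac12\mathrm{tr}[D^2V(X_t)\sigma(X_t)\sigma^*(X_t)]+\frac{\|\sigma^*(X_t)DV(X_t)\|^2}{2e^{\alpha t}}+\bar{V}(X_t)-\alpha V(X_t)\Big].
\]
The standing hypothesis is exactly the assertion that the bracket is $\le 0$ for a.e.\ $t$, and since $\Psi_t e^{-\alpha t}\ge 0$, the drift of $\Psi_t$ is non-positive. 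Hence $\Psi_t$ is a non-negative local supermartingale. I would then remove the ``local'' qualifier by choosing a localizing sequence $\tau_n\uparrow\infty$ reducing the stochastic integral, so $\mathbf{E}[\Psi_{t\wedge\tau_n}]\le \mathbf{E}[\Psi_0]$, and passing to the limit by Fatou's lemma (legitimate because $\Psi\ge 0$) to get $\mathbf{E}[\Psi_t]\le \mathbf{E}[\Psi_0]=\mathbf{E}[\exp(V(X_0))]$ for every $t\in[0,T]$; taking the supremum over $t$ finishes the argument.

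The main obstacle is the rigorous justification of the two successive applications of the Itô formula for $V\in C^2(H;\mathbb{R})$ under the sole a.s.\ integrability hypothesis $\int_0^T(\|\mu(X_t)\|+\|\sigma(X_t)\|^2)\,dt<\infty$: one must give precise meaning to the trace term $\mathrm{tr}[D^2V\,\sigma\sigma^*]$ and to $\|\sigma^*DV\|^2$ in the $\mathcal{L}_2^0$ sense appropriate to the trace-class $Q$-Wiener process, and verify that these correction terms are a.s.\ finite along the paths so that the Itô expansions are valid. Once this analytic bookkeeping is in place, the supermartingale identification and the localization/Fatou step are routine.
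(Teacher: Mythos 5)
Your proposal is correct, but note that the paper itself offers no proof of this lemma: it is quoted verbatim from \cite{CuiHL17}, so the only meaningful comparison is with that reference, whose argument is exactly the exponential-supermartingale proof you give (It\^o's formula for $V(X_t)$, product rule against $e^{-\alpha t}$, a second It\^o step for $e^{Y_t}$ producing the quadratic-variation correction $\tfrac12 e^{-2\alpha t}\|\sigma^*(X_t)DV(X_t)\|^2$, non-positivity of the drift via the standing hypothesis, then localization and Fatou). Two small remarks. First, you implicitly (and correctly) repaired a typo in the statement: the right-hand side of the conclusion should be $\mathbf{E}\big[\exp(V(X_0))\big]$, not $\mathbf{E}V(X_0)$ --- this is what your argument delivers, what the cited reference asserts, and what the paper actually uses in its Lemma 2.4, where the bound $e^{\rho\|\Bu_0\|^2}$ is obtained for deterministic initial data. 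Second, your closing caveat is the right one: the only nontrivial bookkeeping is the validity of the infinite-dimensional It\^o formula for $V\in C^2(H;\mathbb{R})$ with the trace term read against the covariance of the $Q$-Wiener process, together with the conditioning on $\mathscr{F}_0$ that lets the random constant $\alpha$ pass through the computation; once that is granted, the supermartingale identification and the localization/Fatou step are exactly as in \cite{CuiHL17}.
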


\begin{lemma}\label{lem:eiu}
	Under Assumptions \ref{assump:1} and \ref{assump:2}, the true solution $\Bu$ of \eqref{model:NS} has the following exponential integral stability: for any $\rho\geq 0$
	\begin{align}
		\mathbf{E}{ \bigg[}\exp\bigg(\rho\|\Bu(t)\|^2+\rho\int_0^t \|\nabla \Bu(s)\|^2ds\bigg){ \bigg]}<\infty. 
	\end{align}
\end{lemma}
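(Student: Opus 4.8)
The plan is to read off the result from the exponential-integrability criterion of Lemma \ref{lem:staCHL}, applied to the Leray-projected equation $d\Bu=\big(\Delta\Bu-[\Bu\cdot\nabla]\Bu\big)dt+g(\Bu)dW(t)$ with the identifications $\mu(\Bu)=\Delta\Bu-[\Bu\cdot\nabla]\Bu=-A\Bu-P_\mathbf{H}[\Bu\cdot\nabla]\Bu$ and $\sigma(\Bu)=g(\Bu)$. As a preliminary step I would record that, under Assumptions \ref{assump:1} and \ref{assump:2}, the strong solution $\Bu$ is an adapted $\mathbf{H}$-valued process with continuous paths satisfying the a.s.\ integrability $\int_0^T\big(\|\mu(\Bu)\|+\|\sigma(\Bu)\|^2_{\mathcal{L}_2^0}\big)dt<\infty$ required by the lemma, the second bound following from \eqref{eq:rem1}.

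The heart of the matter is choosing the two functionals and the scalar $\alpha$ so that the pointwise Lyapunov inequality of Lemma \ref{lem:staCHL} closes. I would take $V(\Bu)=\rho\|\Bu\|^2+K$ and $\bar V(\Bu)=\rho\|\nabla\Bu\|^2$, so that $DV(\Bu)=2\rho\Bu$ and $D^2V(\Bu)=2\rho\,I$. Then the four contributions are computed as follows: the drift term is $DV(\Bu)\mu(\Bu)=2\rho(\Bu,\Delta\Bu)-2\rho\,b(\Bu,\Bu,\Bu)=-2\rho\|\nabla\Bu\|^2$, where the convection part vanishes by the orthogonality $b(\Bu,\Bu,\Bu)=0$ and $(\Bu,\Delta\Bu)=-\|\nabla\Bu\|^2$ uses the homogeneous Dirichlet condition; the It\^o trace term equals $\tfrac12\mathrm{tr}[D^2V\sigma\sigma^*]=\rho\|g(\Bu)\|^2_{\mathcal{L}_2^0}\le\rho C_g^2\,\mathrm{tr}(Q)$ by \eqref{eq:rem1}; and the quadratic correction is $\frac{\|\sigma^*(\Bu)DV(\Bu)\|^2}{2e^{\alpha t}}=\frac{2\rho^2\|\sigma^*(\Bu)\Bu\|_{U_0}^2}{e^{\alpha t}}\le\frac{2\rho^2C_g^2\,\mathrm{tr}(Q)}{e^{\alpha t}}\|\Bu\|^2\le 2\rho^2C_g^2\,\mathrm{tr}(Q)\|\Bu\|^2$, using $\|\sigma^*(\Bu)\Bu\|_{U_0}\le\|g(\Bu)\|_{\mathcal{L}_2^0}\|\Bu\|$. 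Collecting these, the required inequality $DV\mu+\tfrac12\mathrm{tr}[D^2V\sigma\sigma^*]+\frac{\|\sigma^*DV\|^2}{2e^{\alpha t}}+\bar V\le\alpha V$ reduces, after $-\rho\|\nabla\Bu\|^2+\bar V$ is discarded as nonpositive, to $\rho C_g^2\,\mathrm{tr}(Q)+2\rho^2C_g^2\,\mathrm{tr}(Q)\|\Bu\|^2\le\alpha\rho\|\Bu\|^2+\alpha K$, which holds upon taking $\alpha=2\rho C_g^2\,\mathrm{tr}(Q)$ and $K=C_g^2\,\mathrm{tr}(Q)\,\rho/\alpha$.

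With these choices the conclusion of Lemma \ref{lem:staCHL} reads
\[
\sup_{t\in[0,T]}\mathbf{E}\,\exp\!\Big(\tfrac{V(\Bu(t))}{e^{\alpha t}}+\textstyle\int_0^t\tfrac{\bar V(\Bu(r))}{e^{\alpha r}}\,dr\Big)\le\mathbf{E}\,V(\Bu_0).
\]
Since $e^{\alpha r}\le e^{\alpha T}$ on $[0,T]$ and $K\ge0$, the exponent dominates $\tfrac{\rho}{e^{\alpha T}}\big(\|\Bu(t)\|^2+\int_0^t\|\nabla\Bu(r)\|^2dr\big)$, while the right-hand side $\rho\,\mathbf{E}\|\Bu_0\|^2+K$ is finite by Assumption \ref{assump:2}. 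Running the same construction with the scale of $V$ adjusted to the prescribed $\rho$ then gives the stated finiteness; note that $\alpha$ grows linearly in $\rho$, which is exactly the role of the weight $e^{\alpha t}$ built into the lemma.

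The step I expect to be the main obstacle is the quadratic-in-$\rho$ correction $\frac{\|\sigma^*DV\|^2}{2e^{\alpha t}}\sim\rho^2\|\Bu\|^2$ produced by the multiplicative noise: unlike the drift and the trace term it scales like $\rho^2$, so it cannot be absorbed by the linear-in-$\rho$ dissipation $\bar V$. The device that rescues the argument is to route this term into $\alpha V$ rather than into $\bar V$, allowing $\alpha$ to depend on $\rho$. Two subsidiary points also demand care: the constant It\^o-trace contribution $\rho\|g(\Bu)\|^2_{\mathcal{L}_2^0}$ forces the additive shift $K$ in $V$ (otherwise the inequality would fail at $\Bu=0$), and one must verify that $V,\bar V\in C^2(\mathbf{H};\mathbb{R})$ with the Fr\'echet derivatives computed above, together with the adaptedness and integrability hypotheses, so that the abstract criterion genuinely applies to the projected Navier--Stokes dynamics.
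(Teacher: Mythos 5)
Your proposal follows essentially the same route as the paper: both apply the exponential-integrability criterion of Lemma \ref{lem:staCHL} to the (Leray-projected) equation with $V(\Bx)\sim\rho\|\Bx\|^2$, a gradient-dissipation functional $\bar V$, and $\alpha$ proportional to $\rho C_g^2$, the only cosmetic deviation being that you absorb the It\^o-trace constant through an additive shift $K$ in $V$ while the paper subtracts it inside $\bar V$. Your closing step---passing from the weighted bound involving $e^{-\alpha t}$ with $\alpha\sim\rho$ to the unweighted statement for the prescribed $\rho$---is exactly as terse as the paper's own ``which is the desired result,'' so the proposal matches the paper's proof in both substance and level of detail.
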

\begin{proof}
	We largely follow the proof of \cite[Proposition 4.3]{BrehierCH19}. 
	Since in our case $\mu(\Bx)=\Delta \Bx+(\Bx\cdot\nabla)\Bx, \sigma(\Bx)=g(\Bx)$, we choose  $V(\Bx)=\rho\|\Bx\|^2$ and have
	\begin{align}
		&{ \langle DV(\Bx),\mu(\Bx) \rangle}+\frac{1}{2} \text{tr}[g(\Bx)g^*(\Bx)D^2V(\Bx)]+\frac{1}{2} \|g^*(\Bx)DV(\Bx)\|^2 \notag\\
		&\leq -2\rho\|\nabla \Bx\|^2+\rho C_g^2 Tr(Q)+2\rho^2C_g^2\|\Bx\|^2.
	\end{align} 
	Hence, we take $\alpha\geq 2\rho C_g^2$, and define
	$$
	\bar{V}(\Bx)=2\rho \|\nabla \Bx\|^2-\rho C_g^2Tr(Q).
	$$
	Applying Lemma \ref{lem:staCHL}, we can obtain
	\begin{align}
		\mathbf{E} \bigg[\exp\bigg(e^{-\alpha t}\rho\|\Bu(t)\|^2+\rho\int_0^t e^{-\alpha s}\|\nabla \Bu(s)\|^2ds \bigg) \bigg]\leq e^{\rho \|\Bu_0\|^2},
	\end{align}
	which is the desired result. 
\end{proof}
\begin{rem}
	Similar exponential integrability results for 2D stochastic Navier-Stokes equations with additive trace-class noise are provided in \cite{Debussche13, Gourcy17}.
\end{rem}
\begin{cor}\label{cor:up}
	Under Assumptions \ref{assump:1} and \ref{assump:2}, the true solution $\Bu$ of \eqref{model:NS} has high-order bounded moments, i.e. for any $m\geq 1$, 
	\begin{align}
		\mathbf{E}\|\Bu(t)\|^m<\infty, \ \ t\in [0,T].
	\end{align} 
\end{cor}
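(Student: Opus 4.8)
The plan is to obtain the polynomial moment bounds as an immediate consequence of the exponential integrability proved in Lemma \ref{lem:eiu}, using nothing more than the elementary fact that any polynomial is dominated by the exponential of a quadratic. The key observation is that $\mathbf{E}\|\Bu(t)\|^m$, for an arbitrary power $m\geq 1$, can be controlled by a single exponential moment of the form $\mathbf{E}[\exp(\rho\|\Bu(t)\|^2)]$, and the latter is already guaranteed finite by Lemma \ref{lem:eiu}.

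First I would fix $m\geq 1$ and a parameter $\rho>0$ (say $\rho=1$). Since the map $s\mapsto s^{m/2}e^{-\rho s}$ is continuous on $[0,\infty)$ and tends to $0$ as $s\to\infty$, it is bounded; denote its supremum by $C_{m,\rho}<\infty$. Substituting $s=\|\Bu(t)\|^2$ yields the pathwise inequality
\begin{align}
	\|\Bu(t)\|^m\leq C_{m,\rho}\exp\big(\rho\|\Bu(t)\|^2\big), \qquad \text{a.s.}
\end{align}
Taking expectations, and noting that the integral term in the exponent of Lemma \ref{lem:eiu} is nonnegative so that $\exp(\rho\|\Bu(t)\|^2)\leq \exp(\rho\|\Bu(t)\|^2+\rho\int_0^t\|\nabla\Bu(s)\|^2\,ds)$, I would arrive at
\begin{align}
	\mathbf{E}\|\Bu(t)\|^m\leq C_{m,\rho}\,\mathbf{E}\Big[\exp\Big(\rho\|\Bu(t)\|^2+\rho\int_0^t\|\nabla\Bu(s)\|^2\,ds\Big)\Big].
\end{align}
The right-hand side is finite for every $t\in[0,T]$ by Lemma \ref{lem:eiu}, which establishes the claim.

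Because the comparison constant $C_{m,\rho}$ depends only on $m$ and $\rho$, and not on $t$, the very same argument in fact delivers the stronger uniform-in-time bound $\sup_{t\in[0,T]}\mathbf{E}\|\Bu(t)\|^m<\infty$, provided one invokes the $\sup_t$ form of the exponential estimate furnished by Lemma \ref{lem:staCHL}. There is essentially no obstacle at this stage: the entire difficulty has already been front-loaded into Lemma \ref{lem:eiu}, whose proof supplies the quadratic Lyapunov functional $V(\Bx)=\rho\|\Bx\|^2$ and verifies the drift condition of Lemma \ref{lem:staCHL}. The only minor point worth checking is the finiteness of $C_{m,\rho}$, which is guaranteed by the super-exponential decay of $s^{m/2}e^{-\rho s}$; with this in hand the corollary reduces to combining the pathwise domination with the previously established exponential moment bound.
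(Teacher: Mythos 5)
Your proof is correct and follows exactly the route the paper intends: the paper's own proof of Corollary \ref{cor:up} is the one-line statement that it is a direct consequence of Lemma \ref{lem:eiu}, and your argument simply makes that deduction explicit via the pathwise bound $\|\Bu(t)\|^m\leq C_{m,\rho}\exp(\rho\|\Bu(t)\|^2)$. Nothing is missing; your elaboration (including the observation that the gradient term in the exponent is nonnegative) is precisely the standard reasoning the authors left implicit.
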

\begin{proof}
	It is a direct consequence of Lemma \ref{lem:eiu}. 
\end{proof}

Moreover, the velocity has the following H\"{o}lder continuity condition: 
\begin{lemma}
	For $p\geq 2$, % depending on regulairty of $\Bu_0$, 
	\begin{align}\label{eq:conu}
		&\mathbf{E}\|\Bu(t)-\Bu(s)\|_{L^4}^p\leq C|t-s|^{\theta p},\ \,\notag\\
		&\mathbf{E}\|\Bu(t)-\Bu(s)\|_{\mathbf{V}}^p\leq C|t-s|^{\frac{\theta p}{2}}, \ \ \forall\  0<\theta<\frac{1}{2}.
	\end{align}
\end{lemma}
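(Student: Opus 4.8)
The plan is to work from the mild (semigroup) formulation of \eqref{model:NS}. Writing $A=-P_\mathbf{H}\Delta$ for the Stokes operator and $B(\Bu)=P_\mathbf{H}[(\Bu\cdot\nabla)\Bu]$ for the projected convection term, I would fix $0\le s<t\le T$, set $\tau=t-s$, and use the evolution from $s$ to $t$ to split
\[
\Bu(t)-\Bu(s)=\underbrace{(e^{-A\tau}-I)\Bu(s)}_{=:I_1}\;-\;\underbrace{\int_s^t e^{-A(t-r)}B(\Bu(r))\,dr}_{=:I_2}\;+\;\underbrace{\int_s^t e^{-A(t-r)}g(\Bu(r))\,dW(r)}_{=:I_3}.
\]
Each of $\|\Bu(t)-\Bu(s)\|_{L^4}$ and $\|\Bu(t)-\Bu(s)\|_{\mathbf V}$ will be estimated term by term, after which I take the $L^p(\Omega)$-norm and split the resulting products of spatial norms by H\"older's inequality in $\Omega$, inserting finite moments supplied by Corollary \ref{cor:up} and Lemma \ref{lem:eiu}. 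For the $L^4$ bound I would pass through the two-dimensional Sobolev embedding $\mathbf H^{1/2}\hookrightarrow \mathbf L^4$, so that it suffices to control each $I_j$ in $\mathbf H^{1/2}=D(A^{1/4})$; the $\mathbf V=\mathbf H^1=D(A^{1/2})$ bound is handled in the same framework with one more half-derivative, which is precisely why its admissible rate is halved to $\theta/2$.

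The stochastic term $I_3$ is the most favourable. Applying the Burkholder--Davis--Gundy inequality (Lemma \ref{lem:BDG}) and the analyticity of $e^{-A\cdot}$, I would bound, for the $\mathbf V$-norm,
\[
\mathbf E\|I_3\|_{\mathbf V}^p\le C_p\,\mathbf E\Big(\int_s^t\|A^{1/2}e^{-A(t-r)}g(\Bu(r))\|_{\mathcal L_2^0}^2\,dr\Big)^{p/2},
\]
and then use the third bound in Assumption \ref{assump:1}, namely $\|A^{1/2}g(\Bu)\|_{\mathcal L_2^0}\le C_g$, together with the contractivity of the semigroup, to obtain a clean factor $|t-s|^{p/2}$; the $\mathbf H^{1/2}$ estimate is identical with $A^{1/4}$ in place of $A^{1/2}$. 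Thus $I_3$ contributes the full rate $1/2$ and is never the bottleneck. The linear term $I_1$ is treated with the standard smoothing/H\"older bound $\|(e^{-A\tau}-I)A^{-\gamma}\|\le C\tau^{\gamma}$, which converts the gain $\tau^{\theta}$ (resp.\ $\tau^{\theta/2}$) into a demand of $\theta$ (resp.\ $\theta/2$) extra derivatives on $\Bu(s)$ beyond $\mathbf H^{1/2}$ (resp.\ $\mathbf H^{1}$); here the regularity of the solution and the strict restriction $\theta<1/2$ are used to keep the required index below $\mathbf H^{3/2}$.

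The nonlinear term $I_2$ is where the real work lies. I would estimate $\|A^{\sigma}e^{-A(t-r)}B(\Bu(r))\|$ with $\sigma=1/4$ (for $L^4$) or $\sigma=1/2$ (for $\mathbf V$), using the smoothing $\|A^{\sigma}e^{-A(t-r)}\|\le C(t-r)^{-\sigma}$ against an $\mathbf L^2$-bound of $B(\Bu)$ obtained from the two-dimensional Ladyzhenskaya inequality and the trilinear estimates \eqref{eq:bddb}, schematically $\|B(\Bu)\|\le C\|\Bu\|^{1/2}\|\nabla\Bu\|\,\|A\Bu\|^{1/2}$. A H\"older splitting in $r$ then balances the integrable singularity $(t-r)^{-\sigma}$ against the time-integrated factor $\big(\int_s^t\|A\Bu\|^2\,dr\big)^{1/4}$, producing the required H\"older gain $|t-s|^{\theta}$ (resp.\ $|t-s|^{\theta/2}$). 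Taking expectations, the high powers of $\|\Bu\|$ and $\|\nabla\Bu\|$ that appear are precisely the quantities controlled by the exponential integrability of Lemma \ref{lem:eiu} and the moment bounds of Corollary \ref{cor:up}, which is the reason that estimate was established first.

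I expect the main obstacle to be exactly the control of $I_2$ (and, for the initial layer $s\approx 0$, of $I_1$) under the limited spatial regularity available for the Dirichlet problem. Because the cancellation $\int_{\mathcal O}(\Bu\cdot\nabla)\Bu\cdot\Delta\Bu\,dx=0$ fails here --- the very difficulty emphasised after \eqref{model:NS} --- one cannot promote the integrated bound $\int_0^T\|A\Bu\|^2\,dr$ to a pointwise-in-time $\mathbf H^2$ moment bound. Consequently the convection term can only be absorbed in a time-integrated sense, and it is this loss that both forbids the endpoint $\theta=1/2$ and forces the reduced exponent $\theta/2$ in the stronger $\mathbf V$-norm; the delicate point throughout is to choose the interpolation and H\"older exponents so that every singular kernel remains integrable while the surviving high moments of $\|\nabla\Bu\|$ stay within the reach of Lemma \ref{lem:eiu}.
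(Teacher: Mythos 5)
The paper's own proof is a one-line citation: it defers to Carelli--Prohl \cite{CarelliP12}, remarking only that Lemma 2.2 of Giga \cite{Giga85} (the fractional-power/Stokes-semigroup machinery) remains valid for the Dirichlet problem. Your mild-formulation skeleton --- splitting the increment into $(e^{-A\tau}-I)\Bu(s)$, the convection convolution, and the stochastic convolution, then combining semigroup smoothing, the embedding $D(A^{1/4})\hookrightarrow \mathbf{L}^4$, and BDG --- is exactly the type of argument carried out in that reference, so the route itself is the intended one.

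The genuine gap lies in the regularity inputs you invoke. Corollary \ref{cor:up} gives only $\mathbf{E}\|\Bu(t)\|^m<\infty$ in the $\mathbf{L}^2$-norm, and Lemma \ref{lem:eiu} controls $\|\Bu(t)\|^2$ pointwise in time but $\|\nabla\Bu\|^2$ only in time-integrated form; neither supplies what your estimates consume. Concretely: (i) your convection bound $\|B(\Bu)\|\le C\|\Bu\|^{1/2}\|\nabla\Bu\|\,\|A\Bu\|^{1/2}$, integrated against the kernel $(t-r)^{-\sigma}$, requires moments of $\sup_t\|\nabla\Bu(t)\|$ and of $\big(\int_0^T\|A\Bu\|^2\,dr\big)^{1/2}$, and the latter quantity is established nowhere in this paper --- it is precisely the delicate point under Dirichlet boundary conditions, since $b(\Bu,\Bu,A\Bu)\neq 0$; (ii) the term $I_1$ is even more demanding: to obtain $\|(e^{-A\tau}-I)\Bu(s)\|_{\mathbf{L}^4}\le C\tau^{\theta}$ with $\theta$ arbitrarily close to $1/2$ one needs pointwise-in-time moments of $\Bu(s)$ in $D(A^{1/4+\theta})\subset\mathbf{H}^{1/2+2\theta}$ (and in $D(A^{1/2+\theta/2})$ for the $\mathbf{V}$-bound). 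With only pointwise $\mathbf{V}$-regularity (itself not proven in the paper), $I_1$ yields at best the rate $\tau^{1/4}$, so the whole range $0<\theta<1/2$ is out of reach; note in particular that at $s=0$ Assumption \ref{assump:2} provides only $\Bu_0\in\mathbf{V}$. These missing moment bounds are exactly the content of the regularity lemmas of \cite{CarelliP12} that the paper implicitly imports together with its citation; a self-contained proof along your lines must first establish (or explicitly cite) them, and cannot be closed from Lemma \ref{lem:eiu} and Corollary \ref{cor:up} alone, as your write-up claims.
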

\begin{proof}
	The proofs are the same as those in \cite{CarelliP12} by noting that Lemma 2.2 of Giga \cite{Giga85} is valid for  Dirichlet boundary condition. 
\end{proof}

\section{Semi-discretization and its stability result}

In this section, we will present our scheme based on auxiliary variable approach for \eqref{model:NS}. As will be demonstrated, the scheme is fully explicit in time, while still maintaining unconditional stability, in contrast to its fully implicit counterpart.

\subsection{Algorithm development}
To start with, we rewrite the equation as
\begin{equation}\label{model:NS2}
	\begin{cases}
		\displaystyle{d{\Bu}=\Delta {\Bu}dt-\xi[\Bu\cdot\nabla]\Bu dt-\nabla p dt+\eta g({\Bu})d{ W}(t), \;\; a.s. \  x\in \mathcal{O}=[0,1]^2,}\\
		\text{div}\  {\Bu}=0, \;\; \ a.s. ,\\
		{\Bu}(0)=\Bu_0,\;\; \ a.s. , \\
		d\xi=([\Bu\cdot\nabla]\Bu, \Bu)dt,   \ \xi(0)=1,\\
		d\eta=-(g(\Bu)dW, \Bu-\Bu),\ \ \eta(0)=1.
	\end{cases}
\end{equation}
It is seen that we introduce two auxiliary variables, i.e., $\xi(t)$ and $\eta(t)$ to handle the nonlinear term $(\Bu\cdot\nabla)\Bu dt$ and the stochastic term $g(\Bu)dW(t)$ respectively, 
and
they are theoretically identical to 1. Hence, \eqref{model:NS2} is equivalent to \eqref{model:NS}. 

The pressure-correction scheme based on the auxiliary variables 
can be constructed as follows: 
{find} $(\tilde{\Bu}^{n+1}, \Bu^{n+1}, p^{n+1},$
$ \xi^{n+1}, \eta^{n+1})$ by solving 
\begin{subnumcases}
	{}\tilde{\Bu}^{n+1}-\Bu^n=\tau\Delta \tilde{\Bu}^{n+1}-\tau\xi^{n+1} [\Bu^n\cdot\nabla]\Bu^{n}-\tau\nabla p^{n}+\eta^{n+1}g(\Bu^n)\delta W^n, \label{eq:SAV1}\\
	\qquad\qquad\qquad\qquad\qquad\qquad\qquad\qquad\qquad\qquad\qquad\qquad\tilde{\Bu}^{n+1}|_{\partial\mathcal{O}}=0, \\
	\Bu^{n+1}-\tilde{\Bu}^{n+1}+\tau(\nabla p^{n+1}-\nabla p^n)=0, \label{eq:SAV2} \\
	\nabla\cdot \Bu^{n+1}=0, \; \Bu^{n+1}\cdot {\bf n} |_{\partial\mathcal{O}}=0, \label{eq:SAV21}\\
	\xi^{n+1}-\xi^n={\tau([\Bu^n\cdot\nabla]\Bu^{n}, \tilde{\Bu}^{n+1})}, \label{eq:SAV3}\\
	\eta^{n+1}-\eta^n=-{(g(\Bu^n)\delta W^n, \tilde{\Bu}^{n+1}-\Bu^n-g(\Bu^n)\delta W^n)}, \label{eq:SAV4}\\
	\Bu^0=\Bu_0, \ \xi^0=1, \ \eta^0=1,
\end{subnumcases}
where $\delta W^n=W(t_{n+1})-W(t_n)$.
%{\color{blue}Can: (3.2e) has a mean-reverting mechanism for $\eta$ automatically, see explanation in (3.17). If we also want to make a mean-reverting mechanism for $\xi$, then our scheme would be better. But, (3.2d) would become
	%$$\xi^{n+1}-\xi^n=\tau ([\Bu^n\cdot\nabla]\Bu^n, \tilde{u}^{n+1}-\Bu^n+\tau [\Bu^n\cdot\nabla]\Bu^n).$$
	%The term $\|[\Bu^n\cdot\nabla]\Bu^n\|^2$ causes an issue for stability analysis proof.
	%}

Next, we explain how to solve this system effectively.  

The first step is to
set $\tilde{\Bu}^{n+1}=\tilde{\Bu}_1^{n+1}+\xi^{n+1}\tilde{\Bu}_2^{n+1}+\eta^{n+1}\tilde{\Bu}_3^{n+1}$ in \eqref{eq:SAV1} and solve $\tilde{\Bu}_i^{n+1}, (i=1,2,3)$ from the following decoupled equations
\begin{subnumcases}
	{}\tilde{\Bu}^{n+1}_1-\Bu^n=\tau\Delta \tilde{\Bu}_1^{n+1}-\tau\nabla p^n, \ \ \ \tilde{\Bu}_1^{n+1}|_{\partial\mathcal{O}}=0, \\
	\tilde{\Bu}_2^{n+1}+\tau(\Bu^n\cdot\nabla)\Bu^n=\tau\Delta \tilde{\Bu}_2^{n+1}, \ \ \tilde{\Bu}_2^{n+1}|_{\partial\mathcal{O}}=0, \label{eq:tildeu2}\\
	\tilde{\Bu}_3^{n+1}=\tau\Delta \tilde{\Bu}_3^{n+1}+g(\Bu^n)\delta W^n, \ \ \ \tilde{\Bu}_3^{n+1}|_{\partial\mathcal{O}}=0. \label{eq:tildeu3}
\end{subnumcases}

Next, set 
\begin{align}
	{\Bu}^{n+1}&={\Bu}_1^{n+1}+\xi^{n+1}{\Bu}_2^{n+1}+\eta^{n+1}{\Bu}_3^{n+1}, \label{eq:semiu}\\
	p^{n+1}&=p_1^{n+1}+\xi^{n+1} p_2^{n+1}+\eta^{n+1} p_3^{n+1}, \label{eq:semip}
\end{align}
and solve $\Bu^{n+1}_i, p_i^{n+1}, (i=1,2,3)$ from  
\begin{subnumcases}
	{}\Bu_1^{n+1}-\tilde{\Bu}_1^{n+1}+\tau(\nabla p_1^{n+1}-\nabla p^n)=0,  \\
	\Bu_2^{n+1}-\tilde{\Bu}_2^{n+1}+\tau\nabla p_2^{n+1}=0,  \\
	\Bu_3^{n+1}-\tilde{\Bu}_3^{n+1}+\tau\nabla p_3^{n+1}=0,  \\
	\nabla\cdot \Bu_i^{n+1}=0, \ \ \ \Bu_i^{n+1}\cdot {\bf n}|_{\partial\mathcal{O}}=0.
\end{subnumcases}

Based upon the computed $\tilde{\Bu}_i^{n+1}, \Bu_i^{n+1}, p_i^{n+1}$, we can determine $\xi^{n+1}$ and $\eta^{n+1}$ by substituting expressions of $\tilde{\Bu}^{n+1}$ and $\Bu^{n+1}$ into \eqref{eq:SAV3} and \eqref{eq:SAV4} and solve the following linear system:
$$AX=b,$$
where 
\begin{align}
	&A=\begin{pmatrix}
		1-\tau([\Bu^n\cdot\nabla]\Bu^n, \tilde{\Bu}_2^{n+1}) & -\tau([\Bu^n\cdot\nabla]\Bu^n, \tilde{\Bu}_3^{n+1})\\
		(g(\Bu^n)\delta W^n, \tilde{\Bu}_2^{n+1})& 1+(g(\Bu^n)\delta W^n, \tilde{\Bu}_3^{n+1})
	\end{pmatrix}, \quad\ \ 
	X=\begin{pmatrix}
		\xi^{n+1} \\
		\eta^{n+1}
	\end{pmatrix}, \\
	&b=\begin{pmatrix}
		\xi^n+\tau([\Bu^n\cdot\nabla]\Bu^n, \tilde{\Bu}_1^{n+1})\\
		\eta^n-(g(\Bu^n)\delta W^n, \tilde{\Bu}_1^{n+1}-\Bu^n-g(\Bu^n)\delta W^n)
	\end{pmatrix}.
\end{align}

However, it is not easy to determine whether $A$ is singular or not. Towards this end, 
taking inner product of both sides of \eqref{eq:tildeu2} with $\tilde{\Bu}_2^{n+1}$ and $\tilde{\Bu}_3^{n+1}$ respectively results in
\begin{align}
	&-\tau([\Bu^n\cdot\nabla]\Bu^n, \tilde{\Bu}_2^{n+1}) =\tau\|\nabla\tilde{\Bu}_2^{n+1}\|^2+\|\tilde{\Bu}^{n+1}_2\|^2, \\
	&-\tau([\Bu^n\cdot\nabla]\Bu^n, \tilde{\Bu}_3^{n+1})=\tau(\nabla\tilde{\Bu}_2^{n+1}, \nabla\tilde{\Bu}_3^{n+1})+(\tilde{\Bu}_2^{n+1}, \tilde{\Bu}_3^{n+1}).
\end{align}
Similarly, take inner product of both sides of \eqref{eq:tildeu3} with $\tilde{\Bu}_3^{n+1}$ and  $\tilde{\Bu}_2^{n+1}$  leads to
\begin{align}
	&(g(\Bu^n)\delta W^n, \tilde{\Bu}_3^{n+1}) =\tau\|\nabla\tilde{\Bu}_3^{n+1}\|^2+\|\tilde{\Bu}^{n+1}_3\|^2, \\
	&(g(\Bu^n)\delta W^n, \tilde{\Bu}_2^{n+1})=\tau(\nabla\tilde{\Bu}_2^{n+1}, \nabla\tilde{\Bu}_3^{n+1})+(\tilde{\Bu}_2^{n+1}, \tilde{\Bu}_3^{n+1}).
\end{align}
Hence, $A$ has the equivalent form
\begin{equation}
	A=\begin{pmatrix}
		1+\tau\|\nabla\tilde{\Bu}_2^{n+1}\|^2+\|\tilde{\Bu}^{n+1}_2\|^2 & \tau(\nabla\tilde{\Bu}_2^{n+1}, \nabla\tilde{\Bu}_3^{n+1})+(\tilde{\Bu}_2^{n+1}, \tilde{\Bu}_3^{n+1})\\
		\tau(\nabla\tilde{\Bu}_2^{n+1}, \nabla\tilde{\Bu}_3^{n+1})+(\tilde{\Bu}_2^{n+1}, \tilde{\Bu}_3^{n+1}) & 1+\tau\|\nabla\tilde{\Bu}_3^{n+1}\|^2+\|\tilde{\Bu}^{n+1}_3\|^2
	\end{pmatrix}.
\end{equation}
A simple application of Cauchy-Schwarz inequality shows that $A$ is symmetric and positive-definite. Hence, the $2\times 2$ linear system is uniquely solvable.
Once $\xi^{n+1}$ and $\eta^{n+1}$ are obtained, we can use \eqref{eq:semiu} and \eqref{eq:semip} to update $\Bu^{n+1}$ and $p^{n+1}$. 

It is noteworthy that, at each time step, only $6$ linear systems and a $2 \times 2$ linear algebraic system need to be solved. Furthermore, the computed solution $\Bu^{n+1}$ enjoys the following unconditional stability property.

\subsection{Unconditional stability}
It is easy to show that our method is unconditionally stable up to $2m$ moments. More specifically, we have
\begin{lemma}\label{lem:sta}
	Let Assumption \ref{assump:1} be fulfilled. The auxiliary variable-based pressure-correction scheme \eqref{eq:SAV1}-\eqref{eq:SAV4} is unconditionally stable in the sense that
	\begin{align*}
		\mathbf{E}\|\Bu^{n+1}\|^{2m}+\mathbf E \bigg(\tau \sum\limits_{j=0}^n\|\nabla \tilde{\Bu}^{j+1}\|^2\bigg)^m+\mathbf{E}|\xi^{n+1}|^{2m}+\mathbf{E}|\eta^{n+1}|^{2m}+\tau^{2m} \mathbf{E}\|\nabla p^{n+1}\|^{2m}<\infty, \ \ \forall m\geq 1. 
	\end{align*}
\end{lemma}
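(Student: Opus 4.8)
The plan is to derive an exact discrete energy identity that exploits the defining property of the two auxiliary variables, and then propagate it to the $2m$-th moment via the discrete Burkholder--Davis--Gundy inequality (Lemma \ref{lem:dBDG}) and a discrete Gronwall argument. First I would test \eqref{eq:SAV1} with $2\tilde{\Bu}^{n+1}$, multiply \eqref{eq:SAV3} by $2\xi^{n+1}$ and \eqref{eq:SAV4} by $2\eta^{n+1}$, and add the three relations. The decisive cancellation is that the convection contribution $2\tau\xi^{n+1}([\Bu^n\cdot\nabla]\Bu^n,\tilde{\Bu}^{n+1})$ produced by the viscous test is matched exactly by $2\xi^{n+1}(\xi^{n+1}-\xi^n)$, while the stochastic contribution $2\eta^{n+1}(g(\Bu^n)\delta W^n,\tilde{\Bu}^{n+1})$, after the splitting $\tilde{\Bu}^{n+1}=(\tilde{\Bu}^{n+1}-\Bu^n-g(\Bu^n)\delta W^n)+\Bu^n+g(\Bu^n)\delta W^n$ and use of $2\eta^{n+1}(\eta^{n+1}-\eta^n)$, reduces to the two residual terms $2\eta^{n+1}(g(\Bu^n)\delta W^n,\Bu^n)$ and $2\eta^{n+1}\|g(\Bu^n)\delta W^n\|^2$. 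Treating the pressure through the projection step \eqref{eq:SAV2}--\eqref{eq:SAV21}, via $(\Bu^{n+1},\nabla q)=0$ and the Pythagorean relation $\|\tilde{\Bu}^{n+1}\|^2=\|\Bu^{n+1}\|^2+\tau^2\|\nabla p^{n+1}-\nabla p^n\|^2$, makes the pressure terms telescope. With $E^n=\tfrac12(\|\Bu^n\|^2+|\xi^n|^2+|\eta^n|^2+\tau^2\|\nabla p^n\|^2)$ this yields
\begin{align*}
& E^{n+1}-E^{n}+\tau\|\nabla\tilde{\Bu}^{n+1}\|^2+\tfrac12\big(\|\tilde{\Bu}^{n+1}-\Bu^n\|^2+|\xi^{n+1}-\xi^n|^2+|\eta^{n+1}-\eta^n|^2\big)\\
& \quad =\eta^{n+1}(g(\Bu^n)\delta W^n,\Bu^n)+\eta^{n+1}\|g(\Bu^n)\delta W^n\|^2,
\end{align*}
in which the nonlinear term has vanished without any CFL restriction --- this is the mechanism behind unconditional stability.

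Summing over $j=0,\dots,n$ and abbreviating $\mathcal{E}^{n+1}=\|\Bu^{n+1}\|^2+|\xi^{n+1}|^2+|\eta^{n+1}|^2+\tau^2\|\nabla p^{n+1}\|^2+2\tau\sum_{j=0}^n\|\nabla\tilde{\Bu}^{j+1}\|^2$, I would raise the resulting relation to the power $m$ and take expectations. For the martingale contribution I split $\eta^{j+1}=\eta^j+(\eta^{j+1}-\eta^j)$: the part $\sum_j\eta^j(g(\Bu^j)\delta W^j,\Bu^j)$ is a genuine discrete martingale because $\eta^j,\Bu^j$ are $\mathcal{F}_{t_j}$-measurable and $\delta W^j$ has zero conditional mean, so Lemma \ref{lem:dBDG} combined with the single-step bound $\mathbf{E}[\|g(\Bu^j)\delta W^j\|^m\mid\mathcal{F}_{t_j}]\le C\tau^{m/2}(C_g^2\,\mathrm{Tr}(Q))^{m/2}$ (from Lemma \ref{lem:BDG} and \eqref{eq:rem1}) and Cauchy--Schwarz controls it by $C\sum_j\tau\,\mathbf{E}(\mathcal{E}^j)^m$ after a discrete Jensen step. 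The It\^o-type term $\sum_j\eta^j\|g(\Bu^j)\delta W^j\|^2$ is handled by conditioning, giving $\le C\tau\sum_j(1+\mathbf{E}|\eta^j|^{2m})$.

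The correction terms carrying $\eta^{j+1}-\eta^j$ must be shown to be of higher order in $\tau$. Here I would insert the explicit formula for $\eta^{j+1}-\eta^j$ from \eqref{eq:SAV4} together with \eqref{eq:SAV1}; the delicate piece is $\tau(g(\Bu^j)\delta W^j,\Delta\tilde{\Bu}^{j+1})$, which I would integrate by parts and estimate using the $H^1$-type bound $\|(-\Delta)^{1/2}g(\Bu)Q^{1/2}\|_{HS}\le C_g$ of Assumption \ref{assump:1} (this is exactly why that bound is imposed), with Young's inequality absorbing the $\|\nabla\tilde{\Bu}^{j+1}\|$ factor into the left-hand dissipation and leaving an $O(\tau^2)$, hence summable, remainder. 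Collecting everything produces $\mathbf{E}(\mathcal{E}^{n+1})^m\le C+C\tau\sum_{j=0}^n\mathbf{E}(\mathcal{E}^j)^m$, and the discrete Gronwall inequality closes the estimate uniformly for $n\tau\le T$, yielding all five quantities in the statement.

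The main obstacle is precisely the implicit coupling of $\eta^{n+1}$ (and $\xi^{n+1}$, $\tilde{\Bu}^{n+1}$) with the current increment $\delta W^n$: since $\eta^{n+1}$ is not $\mathcal{F}_{t_n}$-measurable, the residual $\eta^{n+1}(g(\Bu^n)\delta W^n,\Bu^n)$ is not a clean martingale increment, and the decomposition $\eta^{n+1}=\eta^n+(\eta^{n+1}-\eta^n)$ must be combined with the implicit one-step relation $\eta^{n+1}(1+\|g(\Bu^n)\delta W^n\|^2)=\eta^n+\text{(drift)}+\|g(\Bu^n)\delta W^n\|^2$ to verify that the correction is genuinely $O(\tau)$ per step. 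Arranging the bookkeeping so that the unknown high moment $\mathbf{E}|\eta^{n+1}|^{2m}$ enters only inside the Gronwall sum, never with a coefficient that fails to vanish as $\tau\to0$, is the technical heart of the argument.
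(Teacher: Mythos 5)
Your Part 1 is exactly the paper's argument: test \eqref{eq:SAV1} with $2\tilde{\Bu}^{n+1}$, multiply \eqref{eq:SAV3} by $2\xi^{n+1}$ and \eqref{eq:SAV4} by $2\eta^{n+1}$, exploit the exact cancellation of the convection term and the partial cancellation of the stochastic term, and telescope the pressure through the projection step; your energy identity is the paper's relation divided by two. Your treatment of the adapted terms $\sum_j\eta^j(g(\Bu^j)\delta W^j,\Bu^j)$ (discrete BDG, Lemma \ref{lem:dBDG}) and $\sum_j\eta^j\|g(\Bu^j)\delta W^j\|^2$ (conditioning) also matches the paper's estimates $S_2$ and $S_4$.

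The gap is in your handling of the correction terms carrying $\eta^{j+1}-\eta^j$. You propose to insert the explicit formula from \eqref{eq:SAV4} together with \eqref{eq:SAV1}. But that substitution does not only produce the Laplacian piece $\tau(g(\Bu^j)\delta W^j,\Delta\tilde{\Bu}^{j+1})$ you call ``the delicate piece''; it also reintroduces $\tau\xi^{j+1}(g(\Bu^j)\delta W^j,[\Bu^j\cdot\nabla]\Bu^j)$, i.e.\ precisely the quadratic convection term whose elimination is the raison d'\^etre of the auxiliary variable $\xi$. With only $\|\Bu^j\|$ under control (no $L^4$ or $H^1$ bound on $\Bu^j$ is available at this stage, and \eqref{eq:bddb} requires an $H^2$ factor), this term cannot be estimated, even after the skew-symmetry trick $b(\Bu^j,\Bu^j,g\delta W^j)=-b(\Bu^j,g\delta W^j,\Bu^j)$, and your claimed closure $\mathbf{E}(\mathcal{E}^{n+1})^m\le C+C\tau\sum_j\mathbf{E}(\mathcal{E}^j)^m$ does not follow. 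The paper never expands $\eta^{n+1}-\eta^n$ at all: it applies Young's inequality $2ab\le \tfrac14 a^2+4b^2$ to $2(\eta^{n+1}-\eta^n)(g(\Bu^n)\delta W^n,\Bu^n)$ and $2(\eta^{n+1}-\eta^n)\|g(\Bu^n)\delta W^n\|^2$, absorbing $\tfrac12|\eta^{n+1}-\eta^n|^2$ into the same quantity already present on the left-hand side of the identity (which your version also has, with coefficient $\tfrac12$), and the leftovers $4|(g(\Bu^n)\delta W^n,\Bu^n)|^2$ and $4\|g(\Bu^n)\delta W^n\|^4$ are of size $O(\tau)\mathbf{E}\|\Bu^n\|^{2m}$ and $O(\tau^2)$ respectively, by boundedness of $g$, the trace-class property of $Q$, and independence of $\delta W^n$ from $\mathcal{F}_{t_n}$. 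A side symptom of the detour: you assert the bound $\|(-\Delta)^{1/2}g(\Bu)Q^{1/2}\|_{HS}\le C_g$ ``is exactly why that bound is imposed'' --- in fact the stability proof uses only $\|g\|_\infty\le C_g$ and $\mathrm{Tr}(Q)<\infty$; that $H^1$-type bound is needed later, in the error analysis (the estimate of $A_{12}$).
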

\begin{proof}
	Taking inner product of \eqref{eq:SAV1} with $2\tilde{\Bu}^{n+1}$ and using the basic identity $(a-b, 2a)=\|a\|^2-\|b\|^2+\|a-b\|^2$ lead to
	\begin{align}
		&\|\tilde{\Bu}^{n+1}\|^2-\|\Bu^n\|^2+\|\tilde{\Bu}^{n+1}-\Bu^n\|^2+2\tau\xi^{n+1}([\Bu^n\cdot\nabla]\Bu^n, \tilde{\Bu}^{n+1})+2\tau\|\nabla\tilde{\Bu}^{n+1}\|^2+2\tau(\nabla p^n, \tilde{\Bu}^{n+1})\notag\\
		&=2\eta^{n+1}(g(\Bu^n)\delta W^n, \tilde{\Bu}^{n+1}). \label{eq:sta1}
	\end{align}
	Next, we rewrite \eqref{eq:SAV2} as 
	\begin{align}
		\Bu^{n+1}+\tau\nabla p^{n+1}=\tilde{\Bu}^{n+1}+\tau\nabla p^n.
	\end{align}
	Taking inner product of both sides with itself and using \eqref{eq:SAV21}  gives
	\begin{align}
		\|\Bu^{n+1}\|^2+\tau^2\|\nabla p^{n+1}\|^2=\|\tilde{\Bu}^{n+1}\|^2+2\tau (\tilde{\Bu}^{n+1}, \nabla p^n)+\tau^2\|\nabla p^n\|^2. \label{eq:sta2}
	\end{align}
	Similarly, multiplying both sides of \eqref{eq:SAV3} by $2\xi^{n+1}$ and multiplying both sides of \eqref{eq:SAV4} by $2\eta^{n+1}$ give
	\begin{align}
		&|\xi^{n+1}|^2-|\xi^n|^2+|\xi^{n+1}-\xi^n|^2=2\tau \xi^{n+1}([\Bu^n\cdot\nabla]\Bu^n, \tilde{\Bu}^{n+1}), \label{eq:sta3} \\
		&|\eta^{n+1}|^2-|\eta^n|^2+|\eta^{n+1}-\eta^n|^2=-2\eta^{n+1}(g(\Bu^n)\delta W^n, \tilde{\Bu}^{n+1}-\Bu^n-g(\Bu^n)\delta W^n). \label{eq:sta4}
	\end{align}
	Summing up \eqref{eq:sta1}, \eqref{eq:sta2}, \eqref{eq:sta3} and \eqref{eq:sta4}, we derive
	\begin{align}
		&\|\Bu^{n+1}\|^2-\|\Bu^n\|^2+\|\tilde{\Bu}^{n+1}-\Bu^n\|^2+2\tau\|\nabla \tilde{\Bu}^{n+1}\|^2+\tau^2\|\nabla p^{n+1}\|^2 \notag\\
		&+|\xi^{n+1}|^2-|\xi^n|^2+|\xi^{n+1}-\xi^n|^2+|\eta^{n+1}|^2-|\eta^n|^2+|\eta^{n+1}-\eta^n|^2\notag\\
		&=\tau^2\|\nabla p^n\|^2+2(\eta^{n+1}-\eta^n)(g(\Bu^n)\delta W^n, \Bu^n)+2\eta^n(g(\Bu^n)\delta W^n, \Bu^n)\notag\\
		&\quad+2(\eta^{n+1}-\eta^n)\|g(\Bu^n)\delta W^n\|^2+2\eta^n\|g(\Bu^n)\delta W^n\|^2\notag\\
		&\leq \tau^2\|\nabla p^n\|^2+\frac{1}{2}|\eta^{n+1}-\eta^n|^2
		+4|(g(\Bu^n)\delta W^n, \Bu^n)|^2+2(g(\Bu^n)\delta W^n, \eta^n\Bu^n)\notag\\
		&\quad+4\|g(\Bu^n)\delta W^n\|^4+2\eta^n\|g(\Bu^n)\delta W^n\|^2,
	\end{align}
	where the Young's inequality $2ab\leq 4a^2+\frac{1}{4}b^2$ has been used twice. 
	
	Hence, summing over $n$ implies
	\begin{align}
		&\|\Bu^{n+1}\|^2+\tau \sum\limits_{j=0}^n\|\nabla \tilde{\Bu}^{j+1}\|^2+\tau^2 \|\nabla p^{n+1}\|^2+|\xi^{n+1}|^2+|\eta^{n+1}|^2\notag\\
		&\leq \|\Bu^0\|^2+|\xi^0|^2+|\eta^0|^2+\tau^2\|\nabla p^0\|^2+4\sum\limits_{j=0}^n|(g(\Bu^j)\delta W^j, \Bu^j)|^2+2\sum\limits_{j=0}^n(g(\Bu^j)\delta W^j, \eta^j\Bu^j)\notag\\
		&\quad+4\sum\limits_{j=0}^n\|g(\Bu^j)\delta W^j\|^4+2\sum\limits_{j=0}^n\eta^j\|g(\Bu^j)\delta W^j\|^2. 
	\end{align}
	Taking $m$-th power and then expectation on both sides, and applying the elementary inequality
	\begin{align}\label{eq:elem1}
		|a_1+a_2+\cdots+a_n|^p\leq n^{p-1}\sum\limits_{j=1}^n |a_j|^p,\ \  p\geq 1,
	\end{align}
	we can obtain
	\begin{align}\label{eq:stan1}
		&\mathbf{E}\|\Bu^{n+1}\|^{2m}+\mathbf E \bigg(\tau \sum\limits_{j=0}^n\|\nabla \tilde{\Bu}^{j+1}\|^2\bigg)^m+\mathbf{E}|\xi^{n+1}|^{2m}+\mathbf{E}|\eta^{n+1}|^{2m}+\tau^{2m} \mathbf{E}\|\nabla p^{n+1}\|^{2m}\notag\\
		&\leq C+\tau^{2m}\mathbf{E}\|\nabla p^0\|^{2m}+C\mathbf{E}\bigg[\sum\limits_{j=0}^n|(g(\Bu^j)\delta W^j, \Bu^j)|^2\bigg]^m+C\mathbf{E}\bigg|\sum\limits_{j=0}^n(g(\Bu^j)\delta W^j, \Bu^j\eta^j)\bigg|^m\notag\\
		&\quad+C\mathbf{E}\bigg[\sum\limits_{j=0}^n\|g(\Bu^j)\delta W^j\|^4\bigg]^m+C\mathbf{E}\bigg[\sum\limits_{j=0}^n \eta^j\|g(\Bu^j)\delta W^j\|^2\bigg]^m\notag\\
		&:=C(\|\Bu_0\|,m)+\tau^{2m}\mathbf{E}\|\nabla p^0\|^{2m}+S_1+S_2+S_3+S_4.
		%&\leq C+\tau^{2m}\mathbf{E}\|\nabla p^0\|^{2m}+C\mathbf{E}\bigg[\sum\limits_{j=0}^n\|\Bu^j\delta W^j\|^2\bigg]^m+C\tau \bigg(\sum\limits_{j=0}^n \mathbf{E}\|\Bu^j\|^{2m}+\mathbf{E}|\eta^j|^{2m}\bigg)\notag\\
		%&\leq C+\tau^{2m}\mathbf{E}\|\nabla p^0\|^{2m}+C\tau \bigg(\sum\limits_{j=0}^n \mathbf{E}\|\Bu^j\|^{2m}+\mathbf{E}|\eta^j|^{2m}\bigg)
	\end{align}
	Next, we estimate  terms $S_i, (i=1,2,3,4)$ individually. By Assumption \ref{assump:1} 
	we derive
	\begin{align}
		S_1&\leq C\mathbf{E}\bigg[\sum\limits_{j=0}^n\|\delta W^j\|^2\|\Bu^j\|^2\bigg]^m\leq Cn^{m-1}\sum\limits_{j=0}^n \mathbf{E}[\|\delta W^j\|^{2m}\|\Bu^j\|^{2m}] \notag\\
		&\leq Cn^{m-1}\sum\limits_{j=0}^n \mathbf{E}\|\delta W^j\|^{2m}\mathbf{E}\|\Bu^j\|^{2m}\notag\\
		&\leq C(T,m)\tau\sum\limits_{j=0}^n \mathbf{E}\|\Bu^j\|^{2m}.
	\end{align}
	Based upon the discrete BDG ineuality and Assumption \ref{assump:1}, we can have
	\begin{align}
		S_2&\leq C\bigg[\sum\limits_{j=0}^n \bigg(\mathbf{E}|(g(\Bu^j)\delta W^j, \Bu^j\eta^j)|^m\bigg)^{\frac{2}{m}}\bigg]^{\frac{m}{2}}\notag\\
		&\leq C\bigg[\sum\limits_{j=0}^n \bigg(\mathbf{E}(\|\delta W^j\|^m \|\Bu^j\|^m|\eta^j|^m)\bigg)^{\frac{2}{m}}\bigg]^{\frac{m}{2}}\notag\\
		&\leq C\bigg[\sum\limits_{j=0}^n \bigg(\mathbf{E}( \|\Bu^j\|^m|\eta^j|^m\tau^{\frac{m}{2}})\bigg)^{\frac{2}{m}}\bigg]^{\frac{m}{2}}\notag\\
		&\leq C\bigg[\tau\sum\limits_{j=0}^n \bigg(\mathbf{E}( \|\Bu^j\|^{2m}+\mathbf{E}|\eta^j|^{2m})\bigg)^{\frac{2}{m}}\bigg]^{\frac{m}{2}}\notag\\
		&\leq  C\bigg[\tau\sum\limits_{j=0}^n \big(\mathbf{E}( \|\Bu^j\|^{2m}\big)^{\frac{2}{m}}+\big(\mathbf{E}|\eta^j|^{2m})\big)^{\frac{2}{m}}\bigg]^{\frac{m}{2}},
	\end{align}
	where the following elementary inequality has been used:
	$$(a+b)^{\frac{2}{m}}\leq 2(a^{\frac{2}{m}}+b^{\frac{2}{m}}), \ a, b\geq 0, \ m\geq 1.$$
	Employing \eqref{eq:elem1} again, we directly have
	\begin{align}
		S_2&\leq C(T,m, Tr(Q))\tau (\mathbf{E} \|\Bu^j\|^{2m}+\mathbf{E}|\eta^j|^{2m}).
	\end{align}
	Following a similar manner, 
	\begin{align}
		S_3&\leq C\mathbf{E}\bigg[\sum\limits_{j=0}^n \|\delta W^j\|^4\bigg]^m\leq Cn^{m-1}\sum\limits_{j=0}^n\mathbf{E}\|\delta W^j\|^{4m}\leq C(T, Tr(Q),m)\tau
	\end{align}
	and 
	\begin{align}
		S_4&\leq C\mathbf{E}\bigg[\sum\limits_{j=0}^n |\eta^j|\|\delta W^j\|^2\bigg]^m\leq Cn^{m-1}\sum\limits_{j=0}^n \mathbf{E}(|\eta^j|^m\|\delta W^j\|^{2m}) \notag\\
		&\leq Cn^{m-1}\sum\limits_{j=0}^n \mathbf{E}|\eta^j|^m \mathbf{E}\|\delta W^j\|^{2m} \leq C(T,m, Tr(Q))\tau\sum\limits_{j=0}^n \mathbf{E}|\eta^j|^{2m}.
	\end{align}
	Substituting the estimates of $S_1$-$S_4$ into \eqref{eq:stan1}, we have
	\begin{align}
		&\mathbf{E}\|\Bu^{n+1}\|^{2m}+\mathbf E \bigg(\tau \sum\limits_{j=0}^n\|\nabla \tilde{\Bu}^{j+1}\|^2\bigg)^m+\mathbf{E}|\xi^{n+1}|^{2m}+\mathbf{E}|\eta^{n+1}|^{2m}+\tau^{2m} \mathbf{E}\|\nabla p^{n+1}\|^{2m}\\
		&\leq C(\|\Bu_0\|,m)+\tau^{2m}\mathbf{E}\|\nabla p^0\|^{2m}+C(T,m, Tr(Q))\tau\sum\limits_{j=0}^n \mathbf{E}|\eta^j|^{2m}+C(T,m, Tr(Q))\tau\sum\limits_{j=0}^n \mathbf{E}|\Bu^j|^{2m}. \notag
	\end{align}
	The desired result is achieved by the discrete Gronwall's inequality.
\end{proof}
\begin{rem}\label{rem1}
	In contrast to \eqref{eq:SAV4}, an alternative discretization of the governing equation for $\eta$ is given by
	$$\tilde{\eta}^{n+1}-\tilde{\eta}^n=-(g(\Bu^n)\delta W^n, \tilde{\Bu}^{n+1}-\Bu^n).$$
	By following the same approach as in our proof, one can derive the stability result stated in our lemma. However, \eqref{eq:SAV4} incorporates a mean-reverting mechanism, which is analogous to \eqref{eq:illeta} below. Furthermore, this mechanism facilitates the development of the error analysis, as will be demonstrated in the next section.
\end{rem}

\subsection{Why two auxiliary variables}
As inferred from \cite{LiSL21}, the auxiliary variable $\xi$ is employed to handle the nonlinear term in \eqref{model:NS}, analogous to its deterministic counterpart. By utilizing this approach, we are able to handle the nonlinear term explicitly while preserving unconditional stability. In this subsection, we will demonstrate the necessity of the auxiliary variable $\eta$ through the following Langevin stochastic differential equation:
\begin{align}
	dX(t)=-\nabla V(x)dt+\sqrt{2}dB(t), \ \  X(0)=x_0, \ \ t\in [0,T],
\end{align}
where $V(x)=\frac{1}{4}(x^2-1)^2$ is a double-well potential function. It is well-known that the invariant measure of this process 
$$
\pi(x)\propto \exp(-V(x)). 
$$

We shall make a comparison of the following one-auxiliary variable (OAV) method and two-auxiliary variable (TAV) method for the SDE ($\tau$ denotes time stepsize and $\delta B^n\sim\sqrt{\tau}\mathscr{N}(0,1)$).

(1) OAV method:
\begin{subnumcases}
	{}X^{n+1}-X^n=-\tau\xi^{n+1}\nabla V(X^n)+\sqrt{2}\delta B^n, \\
	\xi^{n+1}-\xi^n=\tau(\nabla V(X^n), X^{n+1}-X^n+\tau \nabla V(X^n)), \\
	X^0=x_0,\  \xi^0=1; 
\end{subnumcases}

(2) TAV method:
\begin{subnumcases}
	{}X^{n+1}-X^n=-\tau\xi^{n+1}\nabla V(X^n)+\sqrt{2}\eta^{n+1}\delta B^n, \label{eq:illx}\\
	\xi^{n+1}-\xi^n=\tau(\nabla V(X^n), X^{n+1}-X^n+\tau \nabla V(X^n)), \label{eq:illxi}\\
	\eta^{n+1}-\eta^n=-(\sqrt{2}\delta B^n, X^{n+1}-X^n-\sqrt{2}\delta B^n), \label{eq:illeta}\\
	X^0=x_0,\  \xi^0=1,\ \eta^0=1. 
\end{subnumcases}

We have added a mean-reverting mechanism (cf. \cite{ColemanHW25}) in both methods such that larger step sizes are permitted. More specifically, in the latter method, we substitute \eqref{eq:illx} into \eqref{eq:illxi} and \eqref{eq:illeta} and get
\begin{align}
	&\xi^{n+1}-\sqrt{2}\tau \delta B^n \nabla V(X^n) \eta^{n+1}=\xi^n-\tau^2\nabla V(X^n)^2 (\xi^{n+1}-1);\notag\\
	&\eta^{n+1}-\sqrt{2}\tau \delta B^n \nabla V(X^n) \xi^{n+1}=\eta^n-2|\delta B^n|^2(\eta^{n+1}-1).
\end{align}
Apparently, $-\tau^2\nabla V(X^n)^2 (\xi^{n+1}-1)$ and $-2|\delta B^n|^2(\eta^{n+1}-1)$ are  mean-reverting terms for $\xi^{n+1}$ and $\eta^{n+1}$ respectively. 
Similar pattern can be easily observed for OAV method. 

In our numerical experiments, we take $T=20, x_0\sim 10 \mathscr{N}(0,1)$. Moreover, we repeat $50000$ times for  Monte Carlo approximation and use the Kullback-Leibler (KL) divergence to measure the distance of histograms produced for both methods and $\pi(x)$.

\renewcommand{\arraystretch}{1.5}
\begin{table}
	\begin{tabular}{ c|c|c } 
		\hline
		$\tau$ & OAV method & TAV method \\ 
		\hline
		$\frac{T}{200} $& 0.3521 & 0.0290 \\ 
		$\frac{T}{400}$  & 0.2914 & 0.0591 \\ 
		$\frac{T}{800} $&0.0151 & 0.0020\\
		$\frac{T}{1600}$ & 0.0009& 0.0011\\ 
		$\frac{T}{3200}$ & 0.0005 & 0.0006\\
		\hline
	\end{tabular}
	%\vskip .1in
	\caption{KL divergence between numerical results of both methods and $\pi(x)$. }\label{table31}
\end{table}

As evident from Table \ref{table31}, the TAV method demonstrates a clear advantage in terms of KL divergence for large step sizes. This is further illustrated by the histogram plot in Fig \ref{fig1}. However, for small time step sizes, the two methods exhibit only minor differences.
\begin{figure}[ht]
	\centering
	\vskip -1in
	\resizebox{100mm}{130mm}{\includegraphics{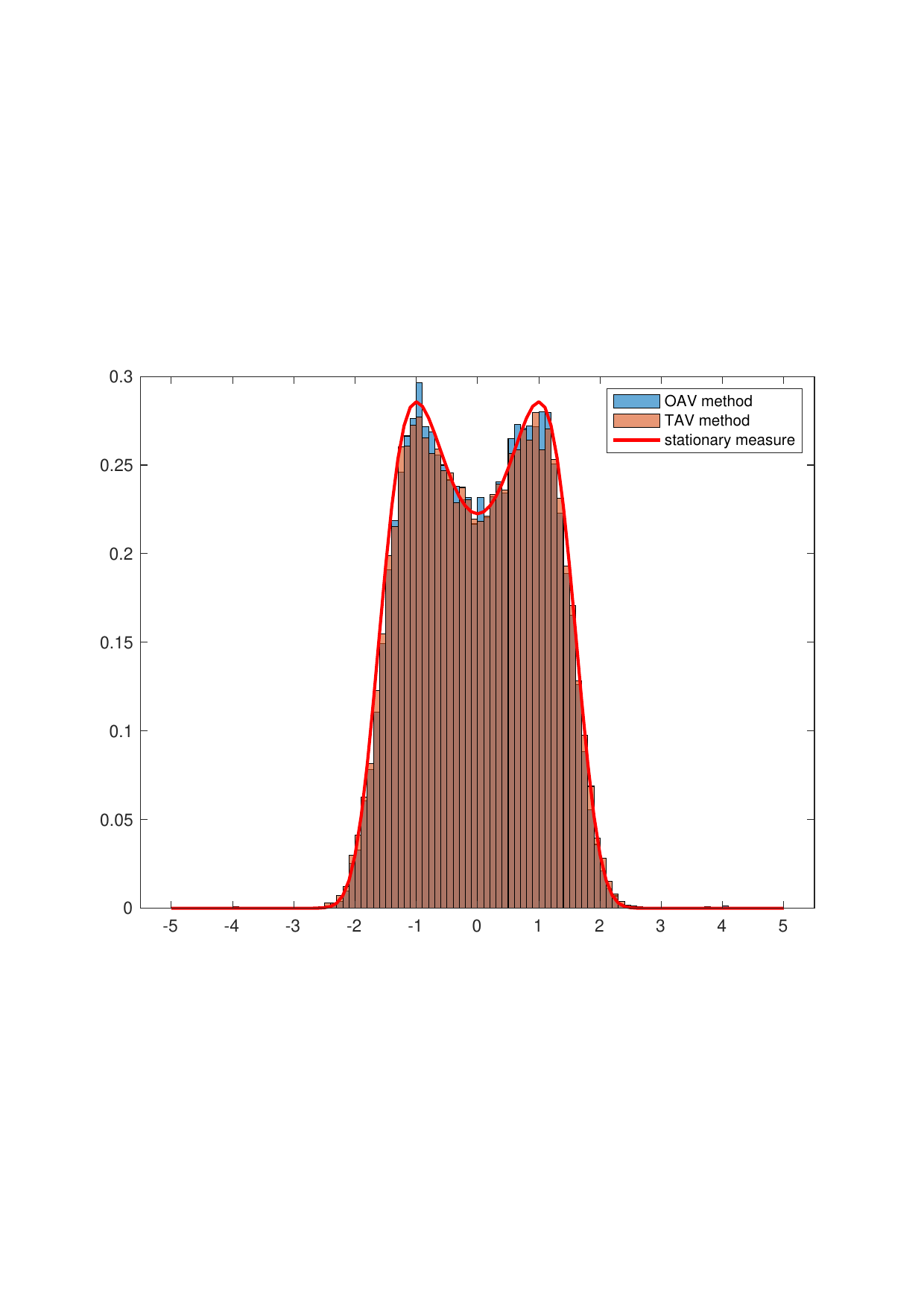}} \vskip -1.4in
	\caption{Histogram produced by OAV method (KL divergence=$0.0190$) and TAV method (KL divergence=$0.0027$) for $\tau=T/700$.} \label{fig1}
\end{figure}
\section{Error analysis for linearized equation}

In this section, we will conduct a convergence analysis for our auxiliary variable-based pressure-correction scheme, leveraging the unconditional stability result from Lemma \ref{lem:sta}. Due to the complexity of the convection term, which interacts with the noise, many error analyses for numerical schemes applied to the stochastic Navier-Stokes equations have been performed locally (cf. \cite{BessaihM22, BreitD21, CarelliP12, HausenblasR19}). To mitigate this challenge and take advantage of established regularity results, we provide an error analysis for our auxiliary variable scheme applied to the linearized stochastic Navier-Stokes equations. Specifically, we focus on the error analysis for the following equation.
\begin{equation}\label{model:NS1}
	\begin{cases}
		\displaystyle{d{\Bu}=\Delta {\Bu}dt-[\Bv\cdot\nabla]\Bu dt-\nabla p dt+g({\Bu})d{ W}(t), \;\; a.s. \  \Bx\in \mathcal{O}=[0,1]^2,}\\
		\text{div}\  {\Bu}=0, \;\; \ a.s. ,\\
		{\Bu}(0)=\Bu_0,\;\; \ a.s. ,
	\end{cases}
\end{equation}
where we assume that $\Bv\in H^2$ w.p.1, and is independent of $\Bu$. Our motivation of this treatment resides in the governing equations of Oseen's flow at low Reynolds numbers where $\Bv$ is considered as a constant vector (cf. \cite{Batchelor92, Oseen27}), as well as similar approach in the numerical treatment of deterministic Navier-Stokes equations (cf. \cite{LLS00}).

Based upon \eqref{model:NS1}, the equation \eqref{eq:SAV1} in our scheme is accordingly replaced by
\begin{align}\label{eq:SAV1b}
	\tilde{\Bu}^{n+1}-\Bu^n=\tau\Delta \tilde{\Bu}^{n+1}-\tau\xi^{n+1} [\Bv^n\cdot\nabla]\Bu^{n}-\tau\nabla p^{n}+\eta^{n+1}g(\Bu^n)\delta W^n
\end{align}
and \eqref{eq:SAV2}-\eqref{eq:SAV4} remain unchanged. 

Let $(\tilde{\Bu}^{n+1},\Bu^{n+1}, p^{n+1},\xi^{n+1}, \eta^{n+1})$ be solutions of \eqref{eq:SAV1}-\eqref{eq:SAV4}.  Denote
\begin{align*}
	&\tilde{e}_\Bu^{n+1}=\tilde{\Bu}^{n+1}-\Bu(t_{n+1}),  \ e_\Bu^{n+1}=\Bu^{n+1}-\Bu(t_{n+1}),\  e_p^{n+1}=p^{n+1}-p(t_{n+1}), \\
	&\ e_\xi^{n+1}=\xi^{n+1}-\xi(t_{n+1}), \ e_{\eta}^{n+1}=\eta^{n+1}-\eta(t_{n+1}), \ e_\Bv^{n+1}=\Bv^{n+1}-\Bv(t_{n+1}).
\end{align*}
Clearly, $\text{div}\  e_\Bu^{n+1}=0$ and $e_\Bu^0=\tilde{e}_\Bu^0=0$.

\begin{theorem}
	Suppose that Assumptions \ref{assump:1}- \ref{assump:3} are fulfilled. Then, our scheme for the linearized stochastic N-S equation \eqref{model:NS1} has the following error bound:
	\begin{align}\label{eq:final}
		\mathbf{E}\| \Bu(t_{n+1})-\Bu^{n+1}\|^2\leq C\tau^{2\theta}, \ \ \forall\  0<\theta<\frac{1}{2},\ \ 1\leq n\leq \frac{T}{\tau},
	\end{align} 
	where $C$ is independent of $\tau$, but depends on $u_0, T, C_g$.
\end{theorem}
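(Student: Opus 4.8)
The plan is to subtract the scheme \eqref{eq:SAV1b} from the variational form of \eqref{model:NS1} integrated over $[t_n,t_{n+1}]$ and then run an energy estimate for $\tilde{e}_\Bu^{n+1}$ coupled to the pressure-correction identity, mirroring the algebra of Lemma \ref{lem:sta}. Writing $\Bu(t_{n+1})-\Bu(t_n)=\int_{t_n}^{t_{n+1}}\big(\Delta\Bu-[\Bv\cdot\nabla]\Bu-\nabla p\big)ds+\int_{t_n}^{t_{n+1}}g(\Bu)dW$ and subtracting, the residual equation for $\tilde{e}_\Bu^{n+1}-e_\Bu^n-\tau\Delta\tilde{e}_\Bu^{n+1}$ splits into four groups: (i) the deterministic consistency errors $\int_{t_n}^{t_{n+1}}\Delta(\Bu(s)-\Bu(t_{n+1}))ds$, $\int_{t_n}^{t_{n+1}}([\Bv\cdot\nabla]\Bu(s)-[\Bv^n\cdot\nabla]\Bu^n)ds$ and $\int_{t_n}^{t_{n+1}}\nabla(p(s)-p(t_n))ds-\tau\nabla e_p^n$; (ii) the auxiliary-variable defects $-\tau(\xi^{n+1}-1)[\Bv^n\cdot\nabla]\Bu^n$ and $(\eta^{n+1}-1)g(\Bu^n)\delta W^n$; (iii) the stochastic discretization error $\int_{t_n}^{t_{n+1}}(g(\Bu^n)-g(\Bu(s)))dW$; and (iv) the solution error $e_\Bu^n$ itself, which the Gronwall argument will eventually absorb. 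Because the equation is linearized and $\Bv\in\mathbf{H}^2$ is independent of $\Bu$, the convection residual can be controlled directly through the bounds \eqref{eq:bddb} and the regularity of $\Bv$, with no localization --- this is the principal simplification over the nonlinear analyses cited in the introduction.

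A preliminary step is to quantify how close the auxiliary variables stay to their exact value $1$. Feeding \eqref{eq:SAV1b} into \eqref{eq:SAV3}--\eqref{eq:SAV4} exposes the mean-reverting structure highlighted in Remark \ref{rem1}: the increments of $\xi$ and $\eta$ carry a damping factor proportional to $\tau\|\nabla\tilde{\Bu}^{n+1}\|^2$, respectively $\|g(\Bu^n)\delta W^n\|^2=O(\tau)$, together with a zero-mean forcing of size $O(\tau^{3/2})$. Combined with the moment bounds of Lemma \ref{lem:sta} and Corollary \ref{cor:up}, this should yield smallness estimates of the form $\mathbf{E}|e_\xi^{n+1}|^{2}+\mathbf{E}|e_\eta^{n+1}|^{2}\le C\tau$ (and analogously for higher moments), so that the defects in group (ii), once paired with a test function and H\"older's inequality in $\Omega$, contribute only higher-order terms.

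The core estimate tests the residual equation with $2\tilde{e}_\Bu^{n+1}$ and uses $(a-b,2a)=\|a\|^2-\|b\|^2+\|a-b\|^2$, producing $\|\tilde{e}_\Bu^{n+1}\|^2-\|e_\Bu^n\|^2+\|\tilde{e}_\Bu^{n+1}-e_\Bu^n\|^2+2\tau\|\nabla\tilde{e}_\Bu^{n+1}\|^2$ on the left after integrating the diffusion term by parts, which is legitimate since $\tilde{e}_\Bu^{n+1}$ vanishes on $\partial\mathcal{O}$. Squaring the correction step \eqref{eq:SAV2}, rewritten as $e_\Bu^{n+1}+\tau\nabla e_p^{n+1}=\tilde{e}_\Bu^{n+1}+\tau\nabla e_p^n+\tau\nabla(p(t_n)-p(t_{n+1}))$, and invoking $(e_\Bu^{n+1},\nabla e_p^{n+1})=0$ gives $\|e_\Bu^{n+1}\|^2+\tau^2\|\nabla e_p^{n+1}\|^2=\|\tilde{e}_\Bu^{n+1}\|^2+\tau^2\|\nabla e_p^n\|^2+\cdots$, whose $\tau^2\|\nabla e_p\|^2$ terms telescope upon summation exactly as in Lemma \ref{lem:sta}. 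The deterministic consistency errors are then bounded through \eqref{eq:conu} and Assumption \ref{assump:3}: the $\mathbf{V}$-H\"older bound handles the diffusion and convection residuals after integration by parts and Young's inequality (absorbing the gradient into $2\tau\|\nabla\tilde{e}_\Bu^{n+1}\|^2$), while $\tau^2\mathbf{E}\|\nabla(p(t_n)-p(t_{n+1}))\|^2\le C\tau^{2+\theta}$ sums to a higher-order contribution. The sharp exponent $\tau^{2\theta}$ originates in group (iii): by the Burkholder-Davis-Gundy inequality (Lemma \ref{lem:BDG}), the Lipschitz bound on $g$ and the $\mathbf{L}^4$-H\"older estimate in \eqref{eq:conu}, one has $\mathbf{E}\|\int_{t_n}^{t_{n+1}}(g(\Bu^n)-g(\Bu(s)))dW\|^2\le C\int_{t_n}^{t_{n+1}}\big(\mathbf{E}\|e_\Bu^n\|^2+\tau^{2\theta}\big)ds$, which after summation yields the claimed $C\tau^{2\theta}$ plus a term fed back into the Gronwall loop.

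The main obstacle is the treatment of the stochastic contributions in groups (ii) and (iii) paired with the implicit test function $2\tilde{e}_\Bu^{n+1}$, which is \emph{not} $\mathcal{F}_{t_n}$-measurable, so the martingale/independence property is unavailable directly. The remedy is to split $\tilde{e}_\Bu^{n+1}=e_\Bu^n+(\tilde{e}_\Bu^{n+1}-e_\Bu^n)$: the inner product of a stochastic increment with the $\mathcal{F}_{t_n}$-measurable part $e_\Bu^n$ has zero conditional mean and, after summation, is estimated by the discrete BDG inequality of Lemma \ref{lem:dBDG}; the inner product with the remainder $\tilde{e}_\Bu^{n+1}-e_\Bu^n$ is absorbed by the dissipative term $\|\tilde{e}_\Bu^{n+1}-e_\Bu^n\|^2$ already present on the left, via Young's inequality and the $O(\tau)$ smallness of $\mathbf{E}\|g(\Bu^n)\delta W^n\|^2$. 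Balancing this absorption against the auxiliary-variable coupling $(\eta^{n+1}-1)g(\Bu^n)\delta W^n$, whose smallness rests on the preliminary bound for $\mathbf{E}|e_\eta^{n+1}|^{2m}$, is the delicate point; once it is in place, taking expectations, summing over $n$, and applying the discrete Gronwall inequality delivers \eqref{eq:final}.
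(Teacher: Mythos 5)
The fatal weakness is your ``preliminary step'': the entire architecture rests on an a priori smallness estimate $\mathbf{E}|e_\xi^{n+1}|^{2}+\mathbf{E}|e_\eta^{n+1}|^{2}\leq C\tau$ (plus fourth moments), derived \emph{independently} of the velocity error, and this estimate is only asserted (``should yield''), not proved --- and in fact it cannot be extracted from the available bounds. Write out the $\eta$-increment by inserting \eqref{eq:SAV1b} into \eqref{eq:SAV4}: it contains the term $\tau\big(g(\Bu^n)\delta W^n,\nabla p^{n}\big)$ involving the \emph{numerical} pressure. Lemma \ref{lem:sta} controls only $\tau^{2m}\mathbf{E}\|\nabla p^{n}\|^{2m}$, i.e.\ $\tau\|\nabla p^{n}\|=O(1)$ in moments, not $O(\tau)$; the conditional variance of this zero-mean increment is therefore of order $\tau$ per step, and summing $N=T/\tau$ steps gives only $\mathbf{E}|e_\eta^{n}|^2\leq C$ --- boundedness, not smallness. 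The $\xi$-increment is no better: after using $b(\Bv^n,\Bu^n,\Bu^n)=0$ one needs per-step control of $\|\Bu^n\|_1$ or of $\|\tilde{\Bu}^{n+1}-\Bu^n\|_1$, neither of which Lemma \ref{lem:sta} provides (it bounds only the accumulated quantity $\tau\sum_j\|\nabla\tilde{\Bu}^{j+1}\|^2$). Obtaining smallness of $e_\eta$ would essentially require pointwise-in-$n$ stability of the discrete pressure, which is itself a by-product of the error analysis, so the argument is circular. Once the preliminary bound is gone, the perturbative treatment of the defects $-\tau(\xi^{n+1}-1)[\Bv^n\cdot\nabla]\Bu^n$ and $(\eta^{n+1}-1)g(\Bu^n)\delta W^n$ collapses, and with it your Gronwall loop.

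The paper's proof is built precisely to avoid ever needing such a bound: the error equations for the auxiliary variables (obtained by subtracting the trivial identity $1-1=0$ from \eqref{eq:SAV3}, \eqref{eq:SAV4} and multiplying by $2e_\xi^{n+1}$, $2e_\eta^{n+1}$) are \emph{added} to the velocity energy identity, whereupon the non-adapted coupling terms $2\tau e_\xi^{n+1}b(\Bv^n,\Bu^n,\tilde{e}_\Bu^{n+1})$ and $2e_\eta^{n+1}\big(g(\Bu^n)\delta W^n,\tilde{e}_\Bu^{n+1}-\cdots\big)$ cancel \emph{exactly} (compare \eqref{eq:econv1} with \eqref{eq:exi}, and \eqref{eq:e5} with \eqref{eq:e6}); $e_\xi$ and $e_\eta$ are then estimated simultaneously with $e_\Bu$ in one coupled argument, with no smallness assumed. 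This cancellation is the entire design purpose of \eqref{eq:SAV3}--\eqref{eq:SAV4}, including the mean-reverting term $-g(\Bu^n)\delta W^n$. A second structural device you omit: the surviving convection term ($J_3$ in the paper) produces $C\tau\|\Bu(t_n)\|_1^2\|e_\Bu^n\|^2$, a Gronwall coefficient that is random and unbounded, so ``take expectations, then Gronwall'' cannot pass through it; the paper instead applies the discrete Gronwall inequality \emph{pathwise first} and only then takes expectation, taming the resulting factor $\exp\big(C\tau\sum_n(1+\|\Bu(t_n)\|_1^2)\big)$ by Cauchy--Schwarz and the exponential integrability of Lemma \ref{lem:eiu} --- which your proposal never invokes. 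Without either the cancellation or this two-stage Gronwall, the bound \eqref{eq:final} is out of reach.
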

\begin{proof}
	
	We shall use the discrete Gronwall's inequality twice in the proof, and thus divide the proof into two parts. 
	
	{\bf Part 1: Preparations before taking expectation}
	
	Integrating \eqref{model:NS1} over $[t_n, t_{n+1}]$, we have
	\begin{align}\label{eq:u1}
		&\Bu(t_{n+1})-\Bu(t_n)+\int_{t_n}^{t_{n+1}}\xi(s)[\Bv(s)\cdot\nabla] \Bu(s)ds -\int_{t_n}^{t_{n+1}} \Delta\Bu(s)ds+\int_{t_n}^{t_{n+1}}\nabla p(s)ds\notag\\
		&=\int_{t_n}^{t_{n+1}} \eta(s)g(\Bu(s))dW(s).
	\end{align}
	Subtracting \eqref{eq:u1} from \eqref{eq:SAV1b}, we can have
	\begin{align}
		& \tilde{e}_\Bu^{n+1}-e_\Bu^n-\tau\Delta \tilde{\Be}_\Bu^{n+1}+\int_{t_n}^{t_{n+1}}\xi^{n+1}[{\Bv}^{n}\cdot\nabla]{\Bu}^{n}-\xi(s)[\Bv(s)\cdot\nabla]\Bu(s)ds+\tau\nabla p^n-\int_{t_n}^{t_{n+1}}\nabla p(s)ds
		\notag\\
		&\qquad=\int_{t_n}^{t_{n+1}} \Delta {\Bu}(t_{n+1})-\Delta \Bu(s)ds+\int_{t_n}^{t_{n+1}} [\eta^{n+1}g(\Bu^n)-\eta(s)g(\Bu(s))]dW(s).
		%& {\Be}^{n+1}-\tau\nabla p^{n+1}={\tilde \Be}^{n+1}-\tau\nabla p^n. \label{eq:e2} 
	\end{align}
	Taking inner product with $2\tilde{e}_\Bu^{n+1}$ and integration by parts,  we derive
	\begin{align}\label{eq:e1}
		&\|\tilde{e}_\Bu^{n+1}\|^2-\|e_\Bu^n\|^2+\|\tilde{e}_\Bu^{n+1}-e_\Bu^n\|^2+2\tau\|\nabla \tilde{e}_\Bu^{n+1}\|^2+\bigg(\tau\nabla p^n-\int_{t_n}^{t_{n+1}}\nabla p(s)ds, \ 2\tilde{e}_\Bu^{n+1}\bigg) \notag\\
		&=-2\int_{t_n}^{t_{n+1}}(\xi^{n+1}[{\Bv}^{n}\cdot\nabla]{\Bu}^{n}-\xi(s)[\Bv(s)\cdot\nabla]\Bu(s), \tilde{e}_\Bu^{n+1})ds
		+2\int_{t_n}^{t_{n+1}} (\nabla \Bu(s)-\nabla {\Bu}(t_{n+1}), \nabla\tilde{e}_\Bu^{n+1})ds\notag\\
		&\quad\quad +2\bigg(\int_{t_n}^{t_{n+1}} \eta^{n+1}g(\Bu^n)-\eta(s)g(\Bu(s))dW(s), \tilde{e}_\Bu^{n+1}\bigg).
	\end{align}
	Moreover, we obtain from \eqref{eq:SAV2} that
	\begin{align}\label{eq:eetilde}
		e_\Bu^{n+1}-\tilde{e}_\Bu^{n+1}+\tau (\nabla p^{n+1}-\nabla p^n)=0,
	\end{align}
	which taking inner product with $e_\Bu^{n+1}+\tilde{e}_\Bu^{n+1}$ implies
	\begin{align}\label{eq:e2}
		\|e_\Bu^{n+1}\|^2-\|\tilde{e}_\Bu^{n+1}\|^2+\tau (\nabla p^{n+1}-\nabla p^n,  \tilde{e}_\Bu^{n+1})=0. 
	\end{align}
	Next, summing up \eqref{eq:e1} and \eqref{eq:e2} gives
	\begin{align}\label{eq:e3}
		&\|{e}_\Bu^{n+1}\|^2-\|e_\Bu^n\|^2+\|\tilde{e}_\Bu^{n+1}-e_\Bu^n\|^2+2\tau\|\nabla \tilde{e}_\Bu^{n+1}\|^2+\bigg(\tau\nabla p^{n+1}+\tau\nabla p^n-2\int_{t_n}^{t_{n+1}}\nabla p(s)ds, \tilde{e}_\Bu^{n+1}\bigg) \notag\\
		&\quad=-2\int_{t_n}^{t_{n+1}}(\xi^{n+1}[{\Bv}^{n}\cdot\nabla]{\Bu}^{n}-\xi(s)[\Bv(s)\cdot\nabla]\Bu(s), \tilde{e}_\Bu^{n+1})ds
		+2\int_{t_n}^{t_{n+1}} (\nabla \Bu(s)-\nabla {\Bu}(t_{n+1}), \nabla\tilde{\Be}_u^{n+1})ds\notag\\
		&\quad\quad +2\bigg(\int_{t_n}^{t_{n+1}} \eta^{n+1}g(\Bu^n)-\eta(s)g(\Bu(s))dW(s), \tilde{e}_\Bu^{n+1}\bigg).
	\end{align}
	Now, we are in a position to bound terms in \eqref{eq:e3} in turn. 
	
	{\it (i) Estimate of the pressure term}
	
	Since $\nabla\cdot e_\Bu^{n+1}=0$, we have by \eqref{eq:eetilde} and integration by parts
	\begin{align}
		&\bigg(\tau\nabla p^{n+1}+\tau\nabla p^n-2\int_{t_n}^{t_{n+1}}\nabla p(s)ds, \tilde{e}_\Bu^{n+1}\bigg) \notag\\
		&=\bigg(\tau \nabla p^{n+1}+\tau \nabla p^n-2\int_{t_n}^{t_{n+1}} \nabla p(s)ds, \ e_\Bu^{n+1}+\tau (\nabla p^{n+1}-\nabla p^n)\bigg)\notag\\
		&=\bigg(\tau \nabla p^{n+1}+\tau \nabla p^n-2\int_{t_n}^{t_{n+1}} \nabla p(s)ds, \tau (\nabla p^{n+1}-\nabla p^n)\bigg)\notag
	\end{align}
	\begin{align}
		&=\bigg(\tau \nabla e_p^{n+1}+\tau\nabla e_p^n+\int_{t_n}^{t_{n+1}} (\nabla p(t_{n+1})-\nabla p(s))ds+\int_{t_n}^{t_{n+1}} (\nabla p(t_{n})-\nabla p(s))ds, \notag\\
		&\qquad \tau (\nabla e_p^{n+1}-\nabla e_p^n)+\tau (\nabla p(t_{n+1})-\nabla p(t_n))\bigg)\notag\\
		&=\tau^2\|\nabla e_p^{n+1}\|^2-\tau^2\|\nabla e_p^n\|^2+I_1+I_2+I_3.
	\end{align}
	We have noted that $I_1, I_2, I_3$ will be moved to the right-hand side of \eqref{eq:e3}. By the Young's inequality, we easily deduce that
	\begin{align}
		|I_1|&=|(\tau \nabla e_p^{n+1}+\tau\nabla e_p^n, \tau (\nabla p(t_{n+1})-\nabla p(t_n)))|\label{eq:eI1}\\
		&\leq \tau^3(\|\nabla e_p^{n+1}\|^2+\|\nabla e_p^n\|^2)+C\tau\|\nabla p(t_{n+1})-\nabla p(t_n)\|^2; \notag\\
		%&\leq \tau^3(\mathbf{E}\|\nabla e_p^{n+1}\|^2+\mathbf{E}\|\nabla e_p^n\|^2)+C\tau^{1+\eta}.
		|I_2|&=\bigg|\bigg(\int_{t_n}^{t_{n+1}} (\nabla p(t_{n+1})-\nabla p(s))ds+\int_{t_n}^{t_{n+1}} (\nabla p(t_{n})-\nabla p(s))ds, 
		\tau (\nabla e_p^{n+1}-\nabla e_p^n)\bigg)\bigg|\label{eq:eI2}\\
		&\leq \tau^3(\|\nabla e_p^{n+1}\|^2+\|\nabla e_p^n\|^2)+C\tau\|\nabla p(t_{n+1})-\nabla p(t_n)\|^2; \notag\\
		|I_3|&=\bigg|\bigg(\int_{t_n}^{t_{n+1}} (\nabla p(t_{n+1})-\nabla p(s))ds+\int_{t_n}^{t_{n+1}} (\nabla p(t_{n})-\nabla p(s))ds, \tau (\nabla p(t_{n+1})-\nabla p(t_n))\bigg)\bigg|\notag\\
		&\leq C\tau^2\|\nabla p(t_{n+1})-\nabla p(t_n)\|^2. \label{eq:eI3}
	\end{align}
	{\it (ii) Estimate of the convection term}
	
	Using the fact that $\xi(s)\equiv 1$, we split the convection term as follows.
	\begin{align}\label{eq:econv1}
		&-\int_{t_n}^{t_{n+1}}(\xi^{n+1}[{\Bv}^{n}\cdot\nabla]{\Bu}^{n}-[\Bv(s)\cdot\nabla]\Bu(s), 2\tilde{e}_\Bu^{n+1})ds \notag\\
		&=-\int_{t_n}^{t_{n+1}}e_\xi^{n+1}([{\Bv}^{n}\cdot\nabla]{\Bu}^{n}, 2\tilde{e}_\Bu^{n+1})ds-\int_{t_n}^{t_{n+1}}([{\Bv}^{n}\cdot\nabla]{\Bu}^{n}-[\Bv(s)\cdot\nabla]\Bu(s), 2\tilde{e}_\Bu^{n+1})ds,
	\end{align}
	where the first term is difficult to bound. Fortunately, the discretization of auxiliary equation on $\xi$ \eqref{eq:SAV3} compensates for it. 
	
	We subtract both sides of \eqref{eq:SAV3} by the trivial identity $1-1=0$ and then multiply by $2e_\xi^{n+1}$ to get
	\begin{align}\label{eq:exi}
		|e_\xi^{n+1}|^2&-|e_\xi^n|^2+|e_\xi^{n+1}-e_\xi^n|^2=2\tau e_\xi^{n+1}  b(\Bv^n, \Bu^n, \tilde{\Bu}^{n+1}) \notag\\
		&=2\tau e_\xi^{n+1} b(\Bv^n, \Bu^n, \tilde{e}_\Bu^{n+1})+2\tau e_\xi^{n+1}b(\Bv^n, \Bu^n, \Bu(t_{n+1})-\Bu(t_n))-2\tau e_\xi^{n+1}b(\Bv^n,\Bu^n, e_\Bu^n)\notag\\
		&\leq 2\tau e_\xi^{n+1}b(\Bv^n, \Bu^n, \tilde{e}_\Bu^{n+1})+2\tau |e_\xi^{n+1}|\|\Bv_n\|_2 \|\Bu^n\| \|\Bu(t_{n+1})-\Bu(t_n)\|_1\notag\\
		&\quad-2\tau e_\xi^{n+1}b(\Bv^n, \Bu(t_n), e_\Bu^n),
	\end{align}
	where we have used the bound of the trilinear term \eqref{eq:bddb} and  the identity $b(\Bv^n, \Bu^n, \Bu^n)=0$.
	
	Thus, a combination of \eqref{eq:econv1} and \eqref{eq:exi} gives
	\begin{align}
		&|e_\xi^{n+1}|^2-|e_\xi^n|^2+|e_\xi^{n+1}-e_\xi^n|^2-\int_{t_n}^{t_{n+1}}(\xi^{n+1}[{\Bv}^{n}\cdot\nabla]{\Bu}^{n}-[\Bv(s)\cdot\nabla]\Bu(s), 2\tilde{e}_\Bu^{n+1})ds \notag\\
		&\leq -\int_{t_n}^{t_{n+1}}([{\Bv}^{n}\cdot\nabla]{\Bu}^{n}-[\Bv(s)\cdot\nabla]\Bu(s), 2\tilde{e}_\Bu^{n+1})ds+2\tau  |e_\xi^{n+1}|\|\Bv_n\|_2 \|\Bu^n\| \|\Bu(t_{n+1})-\Bu(t_n)\|_1\notag\\
		&\quad-2\tau e_\xi^{n+1}b(\Bv^n, \Bu(t_n), e_\Bu^n)\notag\\
		&:=J_1+J_2+J_3.
	\end{align}
	Next, let us bound these three terms.  Recall that $v$ is explicitly known and smooth. Hence, $\|\Bv(t)\|_2\leq C$, $e_\Bv^n=0$ and $\|\Bv(t_n)-\Bv(s)\|_2\leq C\tau, \ s\in [t_n, t_{n+1}]$.  Applying \eqref{eq:bddb} again,  we can have
	\begin{align}
		J_1&=-2\tau  b(e_\Bv^n, \Bu^n, \tilde{e}_\Bu^{n+1})-2\int_{t_n}^{t_{n+1}}b(\Bv(t_n)-\Bv(s), \Bu^n,\tilde{e}_\Bu^{n+1})ds\notag\\
		&\quad-2 \int_{t_n}^{t_{n+1}}b(\Bv(s), \Bu^n-\Bu(t_n),\tilde{e}_\Bu^{n+1})ds-2\int_{t_n}^{t_{n+1}} b(\Bv(s), \Bu(t_n)-\Bu(s),\tilde{e}_\Bu^{n+1})ds\notag\\
		%&\leq 2c\tau \|e_\Bv^n\|_2[ \|\Bu^n\| \|\nabla\tilde{e}_\Bu^{n+1}\|]
		&\leq 2c\int_{t_n}^{t_{n+1}} \|\Bv(t_n)-\Bv(s)\|_2\|\Bu^n\|\|\nabla\tilde{e}_\Bu^{n+1}\|\notag\\
		&\quad+\|\Bv(s)\|_2 \|\Bu^n-\Bu(t_n)\| \|\nabla\tilde{e}_\Bu^{n+1}\|+\|\Bv(s)\|_2 \|\Bu(t_n)-\Bu(s)\| \|\nabla\tilde{e}_\Bu^{n+1}\|ds\notag\\
		&\leq \frac{\tau}{2}\|\nabla \tilde{e}_\Bu^{n+1}\|^2+C\tau^2\|\Bu^n\|^2+C\tau \|e_\Bu^n\|^2+C\int_{t_n}^{t_{n+1}}\|\Bu(t_n)-\Bu(s)\|^2ds.
		%&\leq \frac{\tau}{2}\|\nabla \tilde{e}_\Bu^{n+1}\|^2+C\tau^{1+2\theta}+C\tau \|e_\Bu^n\|^2;
	\end{align}
	Similarly, we can obtain estimates of $J_2$ and $J_3$. 
	\begin{align}
		J_2&\leq C\tau  |e_\xi^{n+1}|\|\Bu^n\| \|\Bu(t_{n+1})-\Bu(t_n)\|_1\notag\\
		&\leq C\tau \|\Bu(t_{n+1})-\Bu(t_n)\|_1^2\|\Bu^n\|^2+C\tau |e_\xi^{n+1}|^2;\notag\\
		J_3&\leq C\tau |e_\xi^{n+1}| \|\Bu(t_n)\|_1\|e_\Bu^n\|\leq C\tau\|\Bu(t_n)\|_1^2\|e_\Bu^n\|^2+C\tau|e_\xi^{n+1}|^2.
	\end{align}
	
	Therefore,
	\begin{align}\label{eq:con}
		&{-}\int_{t_n}^{t_{n+1}}(\xi^{n+1}[{\Bv}^{n}\cdot\nabla]{\Bu}^{n}-[\Bv(s)\cdot\nabla]\Bu(s), 2\tilde{e}_\Bu^{n+1})ds+|e_\xi^{n+1}|^2-|e_\xi^n|^2+|e_\xi^{n+1}-e_\xi^n|^2 \notag\\
		&\leq  \frac{\tau}{2}\|\nabla \tilde{e}_\Bu^{n+1}\|^2+C\tau^2\|\Bu^n\|^2+C\tau \|e_\Bu^n\|^2+C\int_{t_n}^{t_{n+1}}\|\Bu(t_n)-\Bu(s)\|^2ds\notag\\
		&\quad+C\tau \|\Bu(t_{n+1})-\Bu(t_n)\|_1^2\|\Bu^n\|^2+C\tau |e_\xi^{n+1}|^2+C\tau\|\Bu(t_n)\|_1^2\|e_\Bu^n\|^2.
	\end{align}
	It is important to note that the last term is the primary source of difficulty in our estimate and necessitates special treatment.
	
	%\begin{align}
	%\mathbf{E} J_2&\leq C\tau\sqrt{\mathbf{E} |e_\xi^{n+1}|^2\|\Bu^n\|^2} \sqrt{\mathbf{E}\|\Bu(t_{n+1})-\Bu(t_n)\|^2}\notag\\
	%&\leq C\tau \sqrt{\mathbf{E} (1+|\xi^{n+1}|^2)\|\Bu^n\|^2} \sqrt{\mathbf{E}\|\Bu(t_{n+1})-\Bu(t_n)\|^2}\notag\\
	%&\leq C\tau^{1+\theta} \sqrt{2+\mathbf{E} |\xi^{n+1}|^4+\mathbf{E}\|\Bu^n\|^4} \notag\\
	%&\leq C\tau^{1+\theta};
	%\end{align}
	%{\color{red}
		%\begin{align}
		%\mathbf{E}J_3&\leq C\tau\mathbf{E} [|e_\xi^{n+1}| \|\Bu(t_n)\|_1 \|e_\Bu^n\|]\leq C\tau \sqrt{\mathbf{E} |e_\xi^{n+1}|^2\|\Bu(t_n)\|_1^2}\sqrt{\mathbf{E}\|e_\Bu^n\|^2}\notag\\
		%&\leq C\tau \sqrt{2+\mathbf{E} |\xi^{n+1}|^4+\mathbf{E}\|\Bu(t_n)\|_1^4}\sqrt{\mathbf{E}\|e_\Bu^n\|^2}\notag\\
		%&\leq C\tau \sqrt{\mathbf{E}\|e_\Bu^n\|^2}.
		%\end{align}
		%}
	
	{\it (iii) Estimate of diffusion term}
	
	It is clear that
	\begin{align}\label{eq:diff}
		&2\int_{t_n}^{t_{n+1}} \|\nabla \Bu(s)-\nabla\Bu(t_{n+1})\| \|\nabla\tilde{e}_\Bu^{n+1}\|ds
		\leq 2\int_{t_n}^{t_{n+1}}\|\nabla \Bu(s)-\nabla\Bu(t_{n+1})\|^2ds+\frac{\tau}{2}\|\nabla \tilde{e}_\Bu^{n+1}\|^2.
	\end{align}
	
	{\it (iv) Estimate of stochastic term}
	
	Recall the fact that $\eta(s)\equiv 1$, 
	\begin{align}\label{eq:e5}
		&2\bigg(\int_{t_n}^{t_{n+1}} { [}\eta^{n+1}g(\Bu^n)-\eta(s)g(\Bu(s))dW(s), \tilde{e}_\Bu^{n+1}\bigg)\\
		&=2\bigg(\int_{t_n}^{t_{n+1}} e_\eta^{n+1}g(\Bu^n)dW(s), \tilde{e}_\Bu^{n+1}\bigg)+2\bigg(\int_{t_n}^{t_{n+1}} (g(\Bu^n)-g(\Bu(s)))dW(s), \tilde{e}_\Bu^{n+1}\bigg)\notag\\
		&=2\bigg(e_\eta^{n+1}g(\Bu^n)\delta W^n, \tilde{e}_\Bu^{n+1}-e_\Bu^n-\int_{t_n}^{t_{n+1}}[g(\Bu^n)-g(\Bu(s))]dW(s)\bigg)\notag\\
		&\quad+2\bigg(e_\eta^{n+1}g(\Bu^n)\delta W^n, {e}_\Bu^{n}+\int_{t_n}^{t_{n+1}}[g(\Bu^n)-g(\Bu(s))]dW(s)\bigg)\notag\\
		&\quad+2\bigg(\int_{t_n}^{t_{n+1}} (g(\Bu^n)-g(\Bu(s))){ dW(s)}, \tilde{e}_\Bu^{n+1}-e_\Bu^n\bigg)+2\bigg(\int_{t_n}^{t_{n+1}} (g(\Bu^n)-g(\Bu(s))dW(s), {e}_\Bu^{n}\bigg). \notag
	\end{align}
	
	Following a similar fashion as \eqref{eq:exi}, we need to balance the first term on the right-hand side of \eqref{eq:e5}.  Subtracting the trivial identity $0$ from both sides of \eqref{eq:SAV4}, and multiplying both sides of the resulted equation by $2e_\eta^{n+1}$ result in
	\begin{align}\label{eq:e6}
		&|e_\eta^{n+1}|^2-|e_\eta^n|^2+|e_\eta^{n+1}-e_\eta^n|^2=-2e_\eta^{n+1}(g(\Bu^n)\delta W^n, \tilde{\Bu}^{n+1}-\Bu^n-g(\Bu^n)\delta W^n)\\
		&=-2e_\eta^{n+1}\bigg(g(\Bu^n)\delta W^n, \tilde{e}_\Bu^{n+1}-e_\Bu^n-\int_{t_n}^{t_{n+1}}[g(\Bu^n)-g(\Bu(s))]dW(s)\bigg)\notag\\
		&\quad-2e_\eta^{n+1}\bigg(g(\Bu^n)\delta W^n, \Bu(t_{n+1})-\Bu(t_n)-\int_{t_n}^{t_{n+1}}g(\Bu(s))dW(s)\bigg).\notag
	\end{align}
	
	Note that the first term of right-hand side of \eqref{eq:e5} cancels with that of  \eqref{eq:e6}.  
	Thus, summing \eqref{eq:e5} and \eqref{eq:e6}, and applying the elementary inequality $2ab\leq \frac{1}{2}a^2+2b^2$ give
	\begin{align}\label{eq:e8}
		&|e_\eta^{n+1}|^2-|e_\eta^n|^2+|e_\eta^{n+1}-e_\eta^n|^2+2\bigg(\int_{t_n}^{t_{n+1}} \eta^{n+1}g(\Bu^n)-\eta(s){ g}(\Bu(s))dW(s), \tilde{e}_\Bu^{n+1}\bigg)\notag\\
		&=2\bigg(e_\eta^{n+1}g(\Bu^n)\delta W^n, {e}_\Bu^{n}+\int_{t_n}^{t_{n+1}}[g(\Bu^n)-g(\Bu(s))]dW(s)\bigg)\notag\\
		&\quad+2\bigg(\int_{t_n}^{t_{n+1}} (g(\Bu^n)-g(\Bu(s)))dW(s), \tilde{e}_\Bu^{n+1}-e_\Bu^n\bigg)
		+2\bigg(\int_{t_n}^{t_{n+1}} (g(\Bu^n)-g(\Bu(s))dW(s), {e}_\Bu^{n}\bigg)\notag\\
		&\quad-2e_\eta^{n+1}\bigg(g(\Bu^n)\delta W^n, \Bu(t_{n+1})-\Bu(t_n)-\int_{t_n}^{t_{n+1}}g(\Bu(s))dW(s)\bigg)\notag \\
		%&\leq 2\big((e_\eta^{n+1}-e_\eta^n)\Bg(\Bu^n)\delta W^n, {e}_\Bu^{n}\big)+2\big(e_\eta^n\Bg(\Bu^n)\delta W^n, {e}_\Bu^{n}\big)\notag\\
		&\leq \frac{1}{2}\big|e_\eta^{n+1}-e_\eta^n|^2+4\bigg| \bigg(g(\Bu^n)\delta W^n, {e}_\Bu^{n}+\int_{t_n}^{t_{n+1}}[g(\Bu^n)-g(\Bu(s))]dW(s)\bigg)\bigg|^2+2\big(e_\eta^ng(\Bu^n)\delta W^n, {e}_\Bu^{n}\big)\notag\\
		&\quad+2\bigg(e_\eta^ng(\Bu^n)\delta W^n, \int_{t_n}^{t_{n+1}}(g(\Bu^n)-g(\Bu(s)))dW(s)\bigg)
		+\frac{1}{2}\|\tilde{e}_\Bu^{n+1}-e_\Bu^n\|^2\notag\\
		&\quad
		+2\bigg\|\int_{t_n}^{t_{n+1}} (g(\Bu^n)-g(\Bu(s)))dW(s)\bigg\|^2+2\bigg(\int_{t_n}^{t_{n+1}} (g(\Bu^n)-g(\Bu(s))dW(s), {e}_\Bu^{n}\bigg)\notag\\
		&\quad+\frac{1}{2}\big|e_\eta^{n+1}-e_\eta^n|^2+2\bigg|\bigg(g(\Bu^n)\delta W^n, \Bu(t_{n+1})-\Bu(t_n)-\int_{t_n}^{t_{n+1}}g(\Bu(s))dW(s)\bigg)\bigg|^2\notag\\
		&\quad-   
		2\bigg(e_\eta^{n}g(\Bu^n)\delta W^n, \Bu(t_{n+1})-\Bu(t_n)-\int_{t_n}^{t_{n+1}}g(\Bu(s))dW(s)\bigg).
		%2\sqrt{\mathbf{E}\|g(\Bu^n)\delta W^n\|^2} \sqrt{\mathbf{E}\|\Bu(t_{n+1})-\Bu(t_n)\|^2|e_\eta^{n+1}|^2}\notag\\
		%&\leq \frac{1}{2}\mathbf{E}|e_\eta^{n+1}-e_\eta^n|^2+2\mathbf{E}|(g(\Bu^n)\delta W^n, e_\Bu^n)|^2+\frac{1}{2}\mathbf{E}\|\tilde{e}_\Bu^{n+1}-e_\Bu^n\|^2
		%+2cTr(Q)\int_{t_n}^{t_{n+1}} \|\Bu^n-\Bu(s)\|^2ds\notag\\
		%&\quad+c\tau^{1/2} \sqrt[4]{\mathbf{E}\|\Bu(t_{n+1})-\Bu(t_n)\|^4} \sqrt[4]{\mathbf{E}|e_\eta^{n+1}|^4}\notag\\
		%&\leq \frac{1}{2}\mathbf{E}|e_\eta^{n+1}-e_\eta^n|^2+2\mathbf{E}\|g(\Bu^n)\delta W^n\|^2 \mathbf{E} \|e_\Bu^n\|^2+\frac{1}{2}\mathbf{E}\|\tilde{e}_\Bu^{n+1}-e_\Bu^n\|^2+2cTr(Q)\int_{t_n}^{t_{n+1}}\mathbf{E} \|\Bu(t_n)-\Bu(s)\|^2ds\notag\\
		%&\quad+2cTr(Q)\tau\mathbf{E}\|e_\Bu^n\|^2+c\tau^{1/2} \sqrt[4]{\mathbf{E}\|\Bu(t_{n+1})-\Bu(t_n)\|^4} \sqrt[4]{\mathbf{E}|e_\eta^{n+1}|^4}\notag\\
		%&\leq \frac{1}{2}\mathbf{E}|e_\eta^{n+1}-e_\eta^n|^2+C\tau\mathbf{E}\|e_\Bu^n\|^2+\frac{1}{2}\mathbf{E}\|\tilde{e}_\Bu^{n+1}-e_\Bu^n\|^2+C\tau^{1+2\theta}+C\tau^{\frac{1}{2}+2\theta}. 
	\end{align}
	
	Combine the results of estimate of pressure \eqref{eq:eI1}-\eqref{eq:eI3}, estimate of convection term \eqref{eq:con}, estimate of diffusion term \eqref{eq:diff} and estimate of stochastic term \eqref{eq:e8}, and
	substituting them into \eqref{eq:e3}, we obtain
	\begin{align*}
		&\|{e}_\Bu^{n+1}\|^2-\|e_\Bu^n\|^2+\frac{1}{2}\|\tilde{e}_\Bu^{n+1}-e_\Bu^n\|^2+\tau^2\|\nabla e_p^{n+1}\|^2-\tau^2\|\nabla e_p^n\|^2\notag\\
		&\quad+|e_\xi^{n+1}|^2-|e_\xi^n|^2+|e_\xi^{n+1}-e_\xi^n|^2+|e_\eta^{n+1}|^2-|e_\eta^n|^2\notag\\
		&\leq C\tau\|\Bu(t_n)\|_1^2 \|e_\Bu^n\|^2+ { C}\tau\|\nabla p(t_{n+1})-\nabla p(t_n)\|^2+C\tau^2\|\Bu^n\|^2\notag\\
		&\quad+C\int_{t_n}^{t_{n+1}} \|\Bu(t_n)-\Bu(s)\|^2ds+C\tau \|\Bu(t_{n+1})-\Bu(t_n)\|_1^2\|\Bu^n\|^2\notag\\
		&\quad+2\int_{t_n}^{t_{n+1}} \|\nabla\Bu(s)-\nabla\Bu(t_{n+1})\|^2ds +4\bigg| \bigg(g(\Bu^n)\delta W^n, {e}_\Bu^{n}+\int_{t_n}^{t_{n+1}}[g(\Bu^n)-g(\Bu(s))]dW(s)\bigg)\bigg|^2\notag\\
	\end{align*}
	\begin{align}
		&\quad+2\big(e_\eta^n{ g}(\Bu^n)\delta W^n, {e}_\Bu^{n}\big)+2\bigg(e_\eta^ng(\Bu^n)\delta W^n, \int_{t_n}^{t_{n+1}}[g(\Bu^n)-g(\Bu(s))dW(s)]\bigg)\notag\\
		&\quad+2\bigg\|\int_{t_n}^{t_{n+1}} (g(\Bu^n)-g(\Bu(s)))dW(s)\bigg\|^2+2\bigg(\int_{t_n}^{t_{n+1}} (g(\Bu^n)-g(\Bu(s))dW(s), {e}_\Bu^{n}\bigg)\notag\\		
		&\quad+2\bigg|{ \bigg(g(\Bu^n)\delta W^n, \Bu(t_{n+1})-\Bu(t_n)-\int_{t_n}^{t_{n+1}}g(\Bu(s))dW(s)\bigg)}\bigg|^2\notag\\
		&\quad-2\bigg(e_\eta^{n}g(\Bu^n)\delta W^n, \Bu(t_{n+1})-\Bu(t_n)-\int_{t_n}^{t_{n+1}}g(\Bu(s))dW(s)\bigg)\notag\\
		&\quad+C\tau \|e_\Bu^n\|^2+C\tau |e_\xi^{n+1}|^2+\tau^3(\|\nabla e_p^{n+1}\|^2+\|\nabla e_p^n\|^2)+C\tau^2\|\nabla p(t_{n+1})-\nabla p(t_n)\|^2.
	\end{align}
	Since the terms in the last line are of higher order, we omit them henceforth to simplify the proof.
	
	Summing over $n$, we have
	\begin{align}
		&\|{e}_\Bu^{k+1}\|^2+\frac{1}{2}\sum\limits_{n=0}^k\|\tilde{e}_\Bu^{n+1}-e_\Bu^n\|^2+\tau^2\|\nabla e_p^{k+1}\|^2
		+|e_\xi^{k+1}|^2+\sum\limits_{n=0}^k|e_\xi^{n+1}-e_\xi^n|^2+|e_\eta^{k+1}|^2\notag\\
		&\leq C\tau\sum\limits_{n=0}^k(1+\|\Bu(t_n)\|_1^2) \|e_\Bu^n\|^2+ { C}\tau\sum\limits_{n=0}^k\|\nabla p(t_{n+1})-\nabla p(t_n)\|^2+C\tau^2\sum\limits_{n=0}^k\|\Bu^n\|^2\notag\\
		&\quad+C\sum\limits_{n=0}^k\int_{t_n}^{t_{n+1}} \|\Bu(t_n)-\Bu(s)\|^2ds+C\tau\sum\limits_{n=0}^k \|\Bu(t_{n+1})-\Bu(t_n)\|_1^2\|\Bu^n\|^2\notag\\
		&\quad+2\sum\limits_{n=0}^k\int_{t_n}^{t_{n+1}} \|\nabla\Bu(s)-\nabla\Bu(t_{n+1})\|^2ds +4\sum\limits_{n=0}^k\bigg| \bigg({ g}(\Bu^n)\delta W^n, {e}_\Bu^{n}+\int_{t_n}^{t_{n+1}}[g(\Bu^n)-g(\Bu(s))]dW(s)\bigg)\bigg|^2\notag\\
		&\quad+2\sum\limits_{n=0}^k\big(e_\eta^n{ g}(\Bu^n)\delta W^n, {e}_\Bu^{n}\big)
		+2\sum\limits_{n=0}^k\bigg(e_\eta^n{g}(\Bu^n)\delta W^n,    \int_{t_n}^{t_{n+1}}[g(\Bu^n)-g(\Bu(s))]dW(s)\bigg)\notag\\
		&\quad+2\sum\limits_{n=0}^k\bigg\|\int_{t_n}^{t_{n+1}} ({ g}(\Bu^n)-{ g}(\Bu(s)))dW(s)\bigg\|^2
		+2\sum\limits_{n=0}^k\bigg(\int_{t_n}^{t_{n+1}} ({ g}(\Bu^n)-{ g}(\Bu(s))dW(s), {e}_\Bu^{n}\bigg)\notag\\
		&\quad+2\sum\limits_{n=0}^k{ \bigg|{ \bigg(g(\Bu^n)\delta W^n, \Bu(t_{n+1})-\Bu(t_n)-\int_{t_n}^{t_{n+1}}g(\Bu(s))dW(s)\bigg)}\bigg|^2}\notag\\
		&\quad-2\sum\limits_{n=0}^k\bigg(e_\eta^n g(\Bu^n)\delta W^n, \Bu(t_{n+1})-\Bu(t_n)-\int_{t_n}^{t_{n+1}}g(\Bu(s){)}dW(s)\bigg)\notag\\
		&:=C\tau\sum\limits_{n=0}^k(1+\|\Bu(t_n)\|_1^2) \|e_\Bu^n\|^2+A.
	\end{align}
	
	Now,  using the discrete Gronwall's lemma and taking expectation, we obtain
	\begin{align}\label{eq:maineu}
		&\mathbf{E}\|{e}_\Bu^{k+1}\|^2+\mathbf{E}|e_\xi^{k+1}|^2+\mathbf{E}|e_\eta^{k+1}|^2\notag\\
		&\leq \mathbf{E}\bigg[\exp\bigg({ C}\tau\sum\limits_{n=0}^k(1+\|\Bu(t_n)\|_1^2)\bigg)A\bigg]\notag\\
		&\leq \sqrt{\mathbf{E}\exp\bigg({ C}\tau\sum\limits_{n=0}^k(1+\|\Bu(t_n)\|_1^2)\bigg)} \sqrt{\mathbf{E}A^2}\leq C\sqrt{\mathbf{E}A^2},
	\end{align}
	where we have used Lemma \ref{lem:eiu}. 
	
	It should be pointed out that our constants $C$ so far is independent of $\Bu$ and $p$. 
	\vskip .1in
	{\bf Part 2: Estimates after taking expectation}
	
	It remains to estimate $\mathbf{E}A^2$. Using the elementary inequality 
	\eqref{eq:elem1} again,
	we can have
	\begin{align}
		\mathbf{E} A^2&\leq 12 \mathbf{E} \bigg[ \left({ C}\tau\sum\limits_{n=0}^k\|\nabla p(t_{n+1})-\nabla p(t_n)\|^2\right)^2+\left(C\tau^2\sum\limits_{n=0}^k\|\Bu^n\|^2\right)^2\notag\\
		&\quad+ \left(C\sum\limits_{n=0}^k\int_{t_n}^{t_{n+1}} \|\Bu(t_n)-\Bu(s)\|^2ds\right)^2+\left(C\tau\sum\limits_{n=0}^k \|\Bu(t_{n+1})-\Bu(t_n)\|_1^2\|\Bu^n\|^2\right)^2\notag\\
		&\quad
		+\bigg(2\sum\limits_{n=0}^k\int_{t_n}^{t_{n+1}} \|\nabla\Bu(s)-\nabla\Bu(t_{n+1})\|^2ds\bigg)^2\notag\\
		&\quad +\bigg(4\sum\limits_{n=0}^k\bigg| \bigg(g(\Bu^n)\delta W^n, {e}_\Bu^{n}+\int_{t_n}^{t_{n+1}}[g(\Bu^n)-g(\Bu(s))]dW(s)\bigg)\bigg|^2\bigg)^2\notag\\
		&\quad+\bigg(2\sum\limits_{n=0}^k\big(e_\eta^n g(\Bu^n)\delta W^n, {e}_\Bu^{n}\big)\bigg)^2
		+\bigg(2\sum\limits_{n=0}^k\bigg(e_\eta^ng(\Bu^n)\delta W^n,    \int_{t_n}^{t_{n+1}}[g(\Bu^n)-g(\Bu(s))]dW(s)\bigg)\bigg)^2\notag\\
		&\quad+\bigg(2\sum\limits_{n=0}^k\bigg\|\int_{t_n}^{t_{n+1}} (g(\Bu^n)-g(\Bu(s)))dW(s)\bigg\|^2\bigg)^2
		+\bigg(2\sum\limits_{n=0}^k\bigg(\int_{t_n}^{t_{n+1}} (g(\Bu^n)-g(\Bu(s))dW(s), {e}_\Bu^{n}\bigg)\bigg)^2\notag\\
		&\quad+\bigg(2\sum\limits_{n=0}^k\bigg|{ \bigg(g(\Bu^n)\delta W^n, \Bu(t_{n+1})-\Bu(t_n)-\int_{t_n}^{t_{n+1}}g(\Bu(s))dW(s)\bigg)}\bigg|^2\bigg)^2\notag\\
		&\quad+\bigg(2\sum\limits_{n=0}^k\bigg(e_\eta^n g(\Bu^n)\delta W^n, \Bu(t_{n+1})-\Bu(t_n)-\int_{t_n}^{t_{n+1}}g(\Bu(s){ )}dW(s)\bigg)\bigg)^2\bigg]\notag\\
		&:=12\sum\limits_{k=1}^{12}\mathbf{E} A_{k}. 
	\end{align}
	Now, we estimate these $12$ terms one by one.
	
	The estimates of $A_1$-$A_5$ are simple, and we tackle them together. 
	An application of Assumption \ref{assump:3} and Lemma \ref{lem:sta}, and \eqref{eq:conu} leads to
	\begin{align}
		\mathbf{E} A_{1}&=\mathbf{E}\left({ C}\tau\sum\limits_{n=0}^k\|\nabla p(t_{n+1})-\nabla p(t_n)\|^2\right)^2
		\leq { C}T\tau\sum\limits_{n=0}^k \mathbf{E}\|\nabla p(t_{n+1})-\nabla p(t_n)\|^4
		\leq C\tau^{2\theta};  \\
		\mathbf{E}A_2&\leq C\tau^3\mathbf{E} \sum\limits_{n=0}^k\|\Bu^n\|^4\leq C\tau^2;  \\
		\mathbf{E} A_3&\leq C\sum\limits_{n=0}^k \int_{t_n}^{t_{n+1}} \mathbf{E}\|\Bu(t_n)-\Bu(s)\|^4ds\leq C\tau^{4\theta}; \\
		\mathbf{E}A_4&\leq C\tau\sum\limits_{n=0}^k \mathbf{E} \|\Bu(t_{n+1})-\Bu(t_n)\|_1^4\|\Bu^n\|^4
		\leq C\tau\sum\limits_{n=0}^k \sqrt{\mathbf{E} \|\Bu(t_{n+1})-\Bu(t_n)\|_1^8}\sqrt{\mathbf{E}\|\Bu^n\|^8} \notag\\
		&\leq C\tau^{4\theta};  \\
		\mathbf{E}A_5&\leq C\sum\limits_{n=0}^k \int_{t_n}^{t_{n+1}} \mathbf{E}\|\nabla \Bu(s)-\nabla\Bu(t_{n+1})\|^4ds\leq C\tau^{4\theta}. 
	\end{align}
	By standard BDG inequality and Assumption \ref{assump:3} and Corollary \ref{cor:up},
	\begin{align}
		\mathbf{E}A_6&=\mathbf{E} \bigg(4\sum\limits_{n=0}^k\bigg| \bigg(g(\Bu^n)\delta W^n, {e}_\Bu^{n}+\int_{t_n}^{t_{n+1}}[g(\Bu^n)-g(\Bu(s))]dW(s)\bigg)\bigg|^2\bigg)^2\notag\\
		&\leq 64\mathbf{E}\bigg(\sum\limits_{n=0}^k\big| \big(g(\Bu^n)\delta W^n, {e}_\Bu^{n}\big)\big|^2\bigg)^2+64\mathbf{E}\bigg(\sum\limits_{n=0}^k\bigg| \bigg(g(\Bu^n)\delta W^n, \int_{t_n}^{t_{n+1}}[g(\Bu^n)-g(\Bu(s))]dW(s)\bigg)\bigg|^2\bigg)^2\notag\\
		&=64\sum\limits_{n=0}^k\mathbf{E} |(g(\Bu^n)\delta W^n, {e}_\Bu^{n}\big)|^4+128\sum\limits_{i\neq j}^k \mathbf{E} |(g(\Bu^i)\delta W^i, {e}_\Bu^{i}\big)|^2
		\mathbf{E}|(g(\Bu^j)\delta W^j, {e}_\Bu^{j}\big)|^2\notag\\
		&\quad+64k\sum\limits_{n=0}^k \mathbf{E}\bigg| \bigg(\int_{t_n}^{t_{n+1}}g(\Bu^n) dW(s), \int_{t_n}^{t_{n+1}}[g(\Bu^n)-g(\Bu(s))]dW(s)\bigg)\bigg|^4\notag\\
		&\leq C\tau^2\sum\limits_{n=0}^k \mathbf{E}\|e_{\Bu}^n\|^4+C\sum\limits_{i\neq j}^k \tau^2 \mathbf{E}\|e_\Bu^i\|^2 \mathbf{E} \|e_\Bu^j\|^2\notag\\
		&\quad+64k\sum\limits_{n=0}^k \mathbf{E}\bigg[\bigg\|\int_{t_n}^{t_{n+1}}g(\Bu^n)dW(s)\bigg\|^4\bigg\|\int_{t_n}^{t_{n+1}}[g(\Bu^n)-g(\Bu(s))]dW(s)\bigg\|^4\bigg]\notag\\
		&\leq C\tau^2\sum\limits_{n=0}^k (\mathbf{E} \|\Bu(t_n)\|^4+\mathbf{E}\|\Bu^n\|^4)+C\sum\limits_{i\neq j}^k \tau^2 \mathbf{E}\|e_\Bu^i\|^2 \mathbf{E} \|e_\Bu^j\|^2\notag\\
		&\quad+64k\sum\limits_{n=0}^k \sqrt{\mathbf{E}\bigg\|\int_{t_n}^{t_{n+1}}g(\Bu^n)dW(s)\bigg\|^8}\sqrt{\mathbf{E}\bigg\|\int_{t_n}^{t_{n+1}}[g(\Bu^n)-g(\Bu(s))]dW(s)\bigg\|^8}\notag\\
		&\leq C\tau^2\sum\limits_{n=0}^k (\mathbf{E} \|\Bu(t_n)\|^4+\mathbf{E}\|\Bu^n\|^4)+C\sum\limits_{i\neq j}^k \tau^2 \mathbf{E}\|e_\Bu^i\|^2 \mathbf{E} \|e_\Bu^j\|^2+C\tau^2\notag\\
		%&\leq C\tau+\sum\limits_{i=0}^k \tau \mathbf{E}\|e_\Bu^i\|^2 (\tau \sum\limits_{j=0}^k \mathbf{E}\|\Bu(t_j)\|^2+\tau \sum\limits_{j=0}^k\mathbf{E}\|\Bu^j\|^2)\notag\\
		&\leq C\tau+C\bigg(\tau\sum\limits_{n=0}^k \mathbf{E} \|e_\Bu^n\|^2\bigg)^2.
	\end{align}
	A simple application of the Young's inequality implies the bound of $A_7$. 
	\begin{align}
		\mathbf{E}A_7&=\sum\limits_{n=0}^k\mathbf{E} [e_\eta^n(g(\Bu^n)\delta W^n, e_\Bu^n)]^2
		=\sum\limits_{n=0}^k \mathbf{E}|e_\eta^n|^2 \mathbf{E}|(g(\Bu^n)\delta W^n, e_\Bu^n)|^2
		%&\leq \max\limits_n \mathbf{E}|e_\eta^n|^2 \sum\limits_{n=0}^k \mathbf{E}|g(\Bu^n)\delta W^n, e_\Bu^n)|^2\notag\\
		\leq C_g^2\tau\sum\limits_{n=0}^k \mathbf{E}|e_\eta^n|^2 \mathbf{E}\|e_\Bu^n\|^2\notag\\
		%&\leq C\tau\max\limits_n \mathbf{E}|e_\eta^n|^2 \sum\limits_{n=0}^k \mathbf{E}\|e_\Bu^n\|^2\notag\\  
		&\leq \frac{1}{4} \big[\max\limits_{0\leq n \leq k} \mathbf{E}\|e_\Bu^n\|^2\big]^2+C\bigg(\tau \sum\limits_{n=0}^k \mathbf{E}|e_\eta^n|^2\bigg)^2.
		%---old way---
		%&\leq \sqrt{\sum\limits_{n=0}^k \left(\mathbf{E}|e_\eta^n|^2\right)^2 } \sqrt{\sum\limits_{n=0}^k\left[\mathbf{E}(g(\Bu^n)\delta W^n,e_\Bu^n)^2  \right]^2}\notag\\
		%&\leq \sqrt{\tau\sum\limits_{n=0}^k \left(\mathbf{E}|e_\eta^n|^2\right)^2 } \sqrt{\tau\sum\limits_{n=0}^k\left(\mathbf{E}\|e_\Bu^n\|^2  \right)^2}\notag\\
		%&\leq C\sqrt{\tau\sum\limits_{n=0}^k\left(\mathbf{E}\|e_\Bu^n\|^2  \right)^2}.
	\end{align}
	\begin{align}
		\text{Similarly,} \;\; 
		\mathbf{E}A_8&=4\mathbf{E}\bigg(\sum\limits_{n=0}^k\bigg(e_\eta^ng(\Bu^n)\delta W^n,    \int_{t_n}^{t_{n+1}}[g(\Bu^n)-g(\Bu(s))]dW(s)\bigg)\bigg)^2\notag\\
		&=4\mathbf{E}\sum\limits_{n=0}^k\bigg(e_\eta^ng(\Bu^n)\delta W^n,    \int_{t_n}^{t_{n+1}}[g(\Bu^n)-g(\Bu(s))]dW(s)\bigg)^2\notag\\
		%&\leq 4k\sum\limits_{n=0}^k \mathbf{E} \bigg(e_\eta^ng(\Bu^n)\delta W^n,    \int_{t_n}^{t_{n+1}}[g(\Bu^n)-g(\Bu(s))]dW(s)\bigg)^2\notag\\
		&\quad+8\sum\limits_{i\neq j} \mathbf{E}\bigg(e_\eta^ig(\Bu^i)\delta W^i,    \int_{t_i}^{t_{i+1}}[g(\Bu^i)-g(\Bu(s))]dW(s)\bigg)\notag\\
		&\quad\times\mathbf{E}\bigg(e_\eta^jg(\Bu^j)\delta W^j,    \int_{t_j}^{t_{j+1}}[g(\Bu^j)-g(\Bu(s))]dW(s)\bigg)\hskip 2in\notag\\
		&:=A_{81}+A_{82}.
	\end{align}
	By Assumption \ref{assump:1}, we easily have
	\begin{align}
		A_{81}
		&\leq 4\sum\limits_{n=0}^k \mathbf{E}|e_\eta^n|^2 \mathbf{E}\bigg[\|g(\Bu^n)\delta W^n\|^2\bigg\|\int_{t_n}^{t_{n+1}}[g(\Bu^n)-g(\Bu(s))]dW(s)\bigg\|^2\bigg]\notag\\
		&\leq 4 \sum\limits_{n=0}^k \mathbf{E}|e_\eta^n|^2\sqrt{\mathbf{E}\|g(\Bu^n)\delta W^n\|^4}\sqrt{\mathbf{E}\bigg\|\int_{t_n}^{t_{n+1}}[g(\Bu^n)-g(\Bu(s))]dW(s)\bigg\|^4}\notag\\
		&\leq C\tau^2 \sum\limits_{n=0}^k \mathbf{E}|e_\eta^n|^2\leq C\tau^2+C\bigg(\tau\sum\limits_{n=0}^k\mathbf{E}|e_\eta^n|^2\bigg)^2;
	\end{align}
	%and again by Assumption \ref{assump:1} and $Tr(Q)<\infty$, we can have 
	and by Ito isometry \cite[Proposition 2.10]{Kruse14} and Assumption \ref{assump:1}, 
	\begin{align}
		A_{82}&\leq 8\bigg[\sum\limits_{n=0}^k \mathbf{E}\bigg(e_\eta^ng(\Bu^n)\delta W^n,    \int_{t_n}^{t_{n+1}}[g(\Bu^n)-g(\Bu(s))]dW(s)\bigg) \bigg]^2\notag\\
		&=8\bigg[\sum\limits_{n=0}^k \mathbf{E} |e_\eta^n| \mathbf{E}\bigg(g(\Bu^n)\delta W^n,    \int_{t_n}^{t_{n+1}}[g(\Bu^n)-g(\Bu(s))]dW(s)\bigg) \bigg]^2\notag\\
		&\leq 8\bigg[\sum\limits_{n=0}^k \mathbf{E} |e_\eta^n| \mathbf{E} \int_{t_n}^{t_{n+1}}Tr(g(\Bu^n)Q^{1/2} ((g(\Bu^n)-g(\Bu(s))Q^{1/2}))^*)ds \bigg]^2\notag\\
		%&\leq C\bigg(\sum\limits_{n=0}^k \mathbf{E} |e_\eta^n|  \int_{t_n}^{t_{n+1}}\mathbf{E}|g(\Bu^n)-g(\Bu(s))|ds \bigg)^2\notag\\
		%&\leq C\bigg(\sum\limits_{n=0}^k \mathbf{E} |e_\eta^n|  \int_{t_n}^{t_{n+1}}\mathbf{E}|g(\Bu^n)-g(\Bu(t_n))+g(\Bu(t_n))-g(\Bu(s))|ds \bigg)^2\notag\\
		%&\leq C\bigg(\tau\sum\limits_{n=0}^k \mathbf{E} |e_\eta^n|  \mathbf{E}\|e_\Bu^n\|+\tau^{1+\theta}\sum\limits_{n=0}^k\mathbf{E}|e_\eta^n|\bigg)^2\notag\\
		%&\leq C\bigg(\tau\sum\limits_{n=0}^k \sqrt{\mathbf{E} |e_\eta^n|^2}  \sqrt{\mathbf{E}\|e_\Bu^n\|^2}+\tau\sum\limits_{n=0}^k\sqrt{\mathbf{E}|e_\eta^n|^2}\tau^\theta\bigg)^2\notag\\
		%&\leq C\bigg(\tau\sum\limits_{n=0}^k \mathbf{E} |e_\eta^n|^2+  \tau\sum\limits_{n=0}^k\mathbf{E}\|e_\Bu^n\|^2+\tau^{2\theta}\bigg)^2\notag\\
		&\leq C\bigg(\tau\sum\limits_{n=0}^k \mathbf{E} |e_\eta^n|^2\bigg)^2.
	\end{align}
	Hence,
	\begin{align}
		\mathbf{E}A_8&\leq C\tau^2+C\bigg(\tau\sum\limits_{n=0}^k \mathbf{E} |e_\eta^n|^2\bigg)^2.
	\end{align}
	Similarly, by the standard BDG inequality (Lemma \ref{lem:BDG}) and \eqref{eq:rem1}, we have
	\begin{align}
		\mathbf{E}A_9&=4\mathbf{E} \bigg(\sum\limits_{n=0}^k\bigg\|\int_{t_n}^{t_{n+1}} ({ g}(\Bu^n)-{g}(\Bu(s)))dW(s)\bigg\|^2\bigg)^2\notag\\
		&=4\sum\limits_{n=0}^k \mathbf{E} \bigg\|\int_{t_n}^{t_{n+1}} ({ g}(\Bu^n)-{ g}(\Bu(s)))dW(s)\bigg\|^4\notag\\
		&\quad+8\sum\limits_{i,j=0, i\neq j}^k \mathbf{E} \bigg\|\int_{t_i}^{t_{i+1}} ({ g}(\Bu^i)-{ g}(\Bu(s)))dW(s)\bigg\|^2 \mathbf{E} \bigg\|\int_{t_j}^{t_{j+1}} ({g}(\Bu^j)-{ g}(\Bu(s)))dW(s)\bigg\|^2\notag\\
		&\leq C\sum\limits_{n=0}^k \bigg(\int_{t_n}^{t_{n+1}}\mathbf{E}\|g(\Bu^n)-g(\Bu(s))\|_{\mathcal{L}_2^0}^2ds \bigg)^2\notag\\
		&\quad+C\sum\limits_{i,j=0, i\neq j}^k \int_{t_i}^{t_{i+1}} \mathbf{E} \|{ g}(\Bu^i)-{ g}(\Bu(s))\|_{\mathcal{L}_2^0}^2ds \int_{t_j}^{t_{j+1}} \mathbf{E} \|{ g}(\Bu^j)-{ g}(\Bu(s))\|_{\mathcal{L}_2^0}^2ds\notag\\
		&\leq C\sum\limits_{n=0}^k \tau^2 (\mathbf{E}\|e_\Bu^n\|^2+\tau^{2\theta})^2+C\tau^2\sum\limits_{i,j=0, i\neq j}^k (\mathbf{E}\|e_\Bu^i\|^2+\tau^{2\theta})
		(\mathbf{E}\|e_\Bu^j\|^2+\tau^{2\theta})\notag\\
		&\leq C\bigg(\tau\sum\limits_{n=0}^k  (\mathbf{E}\|e_\Bu^n\|^2+\tau^{2\theta})^2\bigg)^2.
	\end{align}
	Using  Assumption \ref{assump:3} and Ito isometry again, we have
	\begin{align}
		\mathbf{E}A_{10}&=\mathbf{E}\bigg(2\sum\limits_{n=0}^k\bigg(\int_{t_n}^{t_{n+1}} (g(\Bu^n)-g(\Bu(s))dW(s), {e}_\Bu^{n}\bigg)\bigg)^2\notag\\
		&=4\sum\limits_{n=0}^k\mathbf{E}\bigg(\int_{t_n}^{t_{n+1}} (g(\Bu^n)-g(\Bu(s))dW(s), {e}_\Bu^{n}\bigg)^2\notag\\
		&\leq 4\sum\limits_{n=0}^k \mathbf{E}\bigg\|\int_{t_n}^{t_{n+1}} (g(\Bu^n)-g(\Bu(s))dW(s)\bigg\|^2 \mathbf{E}\|e_\Bu^n\|^2\notag\\
		&\leq C\sum\limits_{n=0}^k\int_{t_n}^{t_{n+1}} \mathbf{E}\|g(\Bu^n)-g(\Bu(s))\|_{\mathcal{L}_2^0}^2ds \mathbf{E}\|e_\Bu^n\|^2\notag\\
		&\leq C\sum\limits_{n=0}^k\tau(\mathbf{E}\|e_\Bu^n\|^2+\tau^{2\theta}) \mathbf{E}\|e_\Bu^n\|^2\notag\\
		&\leq \frac{1}{4}\big[\max\limits_{0\leq n\leq k}\mathbf{E}\|e_\Bu^n\|^2\big]^2+C\bigg(\sum\limits_{n=0}^k\tau(\mathbf{E}\|e_\Bu^n\|^2+\tau^{2\theta})\bigg)^2.
	\end{align}
	Before giving an estimate of $A_{11}$, let us find a continuity result of $\Bu$. By \eqref{eq:conu} and standard BDG inequallity, we can get
	\begin{align}
		&\mathbf{E}\bigg\|\Bu(t_{n+1})-\Bu(t_n)-\int_{t_n}^{t_{n+1}}g(\Bu(s))dW(s)\bigg\|^8 \notag\\
		&\leq C\mathbf{E}\|\Bu(t_{n+1})-\Bu(t_n)\|^8+C\mathbf{E}\bigg\|\int_{t_n}^{t_{n+1}}g(\Bu(s))dW(s)\bigg\|^8\notag\\
		&\leq C\tau^{8\theta}+C\mathbf{E}\bigg(\int_{t_n}^{t_{n+1}}\|g(\Bu(s))\|_{\mathcal{L}_2^0}^2ds\bigg)^{4}\leq C\tau^{8\theta}+C\tau^4.
	\end{align}
	Therefore,
	\begin{align}
		\mathbf{E}A_{11} &=\mathbf{E}\bigg({ 2}\sum\limits_{n=0}^k\bigg|{ \bigg(g(\Bu^n)\delta W^n, \Bu(t_{n+1})-\Bu(t_n)-\int_{t_n}^{t_{n+1}}g(\Bu(s))dW(s)\bigg)}\bigg|^2\bigg)^2\notag\\
		&\leq { 4(k+1)}\sum\limits_{n=0}^k\mathbf{E}\bigg[ \|g(\Bu^n)\delta W^n\|^4 \bigg\|\Bu(t_{n+1})-\Bu(t_n)-\int_{t_n}^{t_{n+1}}g(\Bu(s))dW(s)\bigg\|^4\bigg]\notag\\
		&\leq { 4(k+1)}\sum\limits_{n=0}^k \sqrt{\mathbf{E} \|g(\Bu^n)\delta W^n\|^8} \sqrt{\mathbf{E}\bigg\|\Bu(t_{n+1})-\Bu(t_n)-\int_{t_n}^{t_{n+1}}g(\Bu(s))dW(s)\bigg\|^8}\notag\\
		&\leq C\tau^{4\theta}+C\tau^2. 
	\end{align}
	
	Since the true solution satisfies
	\begin{align}\label{eq:ut1}
		\Bu(t_{n+1})-\Bu(t_n)=\int_{t_n}^{t_{n+1}}\Delta\Bu(s)ds+\int_{t_n}^{t_{n+1}} [\Bv(s)\cdot\nabla]\Bu(s)ds+\int_{t_n}^{t_{n+1}}g(\Bu(s))dW(s),
	\end{align}
	applying Assumption \ref{assump:1} and Cauchy-Schwartz inequality, we obtain
	\begin{align}\label{eq:a12}
		\mathbf{E}A_{12}&=4\mathbf{E}\bigg(\sum\limits_{n=0}^k{ \bigg(}e_\eta^{n}g(\Bu^n)\delta W^n, \Bu(t_{n+1})-\Bu(t_n)-\int_{t_n}^{t_{n+1}}g(\Bu(s))dW(s){ \bigg)} \bigg)^2\notag\\
		&=4\mathbf{E}\bigg(\sum\limits_{n=0}^k{ \bigg(}e_\eta^{n}g(\Bu^n)\delta W^n, \int_{t_n}^{t_{n+1}}\Delta\Bu(s)ds+\int_{t_n}^{t_{n+1}} [\Bv(s)\cdot\nabla]\Bu(s)ds { \bigg)}\bigg)^2\notag\\
		&=4\mathbf{E}\bigg(\sum\limits_{n=0}^k{ \bigg(}e_\eta^{n}(-\Delta)^{\frac{1}{2}}g(\Bu^n)\delta W^n, \int_{t_n}^{t_{n+1}}(-\Delta)^{\frac{1}{2}}\Bu(s)ds+\int_{t_n}^{t_{n+1}} (-\Delta)^{-\frac{1}{2}}[\Bv(s)\cdot\nabla]\Bu(s)ds{ \bigg)} \bigg)^2\notag\\
		%&\leq \mathbf{E} \bigg[\max\limits_n \bigg\|\Bu(t_{n+1})-\Bu(t_n)-\int_{t_n}^{t_{n+1}}g(\Bu(s))dW(s)\bigg\|^2 \sum\limits_{n=0}^k \|e_\eta^ng(\Bu^n)\delta W^n\|^2 \bigg]\notag\\
		%&\leq \sqrt{\mathbf{E}\max\limits_n\|\Bu(t_{n+1})-\Bu(t_n)\|^4} \sqrt{\mathbf{E} \bigg[ \sum\limits_{n=0}^k \|e_\eta^ng(\Bu^n)\delta W^n\|^2\bigg]^2}\notag\\
		%&\leq \sqrt{\mathbf{E}\max\limits_n\|\Bu(t_{n+1})-\Bu(t_n)\|^4} \sqrt{ k\sum\limits_{n=0}^k \mathbf{E}\|e_\eta^ng(\Bu^n)\delta W^n\|^4}\notag\\
		%&\leq C\tau^{2\theta}.
		&\leq 4\mathbf{E} \Bigg[\sum\limits_{n=0}^k \|e_\eta^{n}(-\Delta)^{\frac{1}{2}}g(\Bu^n)\delta W^n\|^2  \sum\limits_{n=0}^k \bigg\|\int_{t_n}^{t_{n+1}}(-\Delta)^{\frac{1}{2}}\Bu(s)ds+\int_{t_n}^{t_{n+1}} (-\Delta)^{-\frac{1}{2}}[\Bv(s)\cdot\nabla]\Bu(s)ds \bigg\|^2\bigg]\notag\\
		&\leq 2\mathbf{E} \bigg(\sum\limits_{n=0}^k \|e_\eta^{n}(-\Delta)^{\frac{1}{2}}g(\Bu^n)\delta W^n\|^2\bigg)^2\notag\\
		&\quad+2\mathbf{E}\bigg(\sum\limits_{n=0}^k \bigg\|\int_{t_n}^{t_{n+1}}(-\Delta)^{\frac{1}{2}}\Bu(s)ds+\int_{t_n}^{t_{n+1}} (-\Delta)^{-\frac{1}{2}}[\Bv(s)\cdot\nabla]\Bu(s)ds \bigg\|^2\bigg)^2\notag\\
		&:={ \mathbf{E}A_{121}+\mathbf{E}A_{122}}.
	\end{align}
	Expanding the summation and applying our stability result from Lemma \ref{lem:sta} and Assumpton \ref{assump:1}, we arrive at
	\begin{align}
		\mathbf{E}A_{121}&=2\sum\limits_{n=0}^k \mathbf{E}\|e_\eta^n{ (-\Delta)^{\frac{1}{2}}}	
		g(\Bu^n)\delta W^n\|^4+2\sum\limits_{i\neq j, i,j=0}^k\mathbf{E}\|e_\eta^i{ (-\Delta)^{\frac{1}{2}}}g(\Bu^i)\delta W^i\|^2\mathbf{E}\|e_\eta^j{ (-\Delta)^{\frac{1}{2}}}g(\Bu^j)\delta W^j\|^2\notag\\
		&\leq C\sum\limits_{n=0}^k\mathbf{E}|e_\eta^n|^4\tau^2+{ C}\bigg(\tau\sum\limits_{n=0}^k\mathbf{E}|e_\eta^n|^2\bigg)^2\notag\\
		&\leq C\sum\limits_{n=0}^k \mathbf{E}(1+|\eta^n|^4)\tau^2+{ C}\bigg(\tau\sum\limits_{n=0}^k\mathbf{E}|e_\eta^n|^2\bigg)^2\notag\\
		&\leq C\tau+{ C}\bigg(\tau\sum\limits_{n=0}^k\mathbf{E}|e_\eta^n|^2\bigg)^2.
	\end{align}
	The estimate of $\mathbf{E}A_{122}$ requires the following estimate (cf. \cite{CarelliP12,Giga85}):
	$$\|(-\Delta)^{-\frac{1}{2}}P_{\mathbf{H}}[\Bv\cdot\nabla]\Bu\|\leq C\|(-\Delta)^{\frac{1}{2}}\Bv\|\|(-\Delta)^{\frac{1}{2}}\Bu\|.$$
	Hence, by Lemma \ref{lem:eiu} and assumption on $v$, 
	\begin{align*}
		\mathbf{E}A_{122}&=2\mathbf{E}\bigg(\sum\limits_{n=0}^k \bigg\|\int_{t_n}^{t_{n+1}}(-\Delta)^{\frac{1}{2}}\Bu(s)ds+\int_{t_n}^{t_{n+1}} (-\Delta)^{-\frac{1}{2}}[\Bv(s)\cdot\nabla]\Bu(s)ds \bigg\|^2\bigg)^2\notag\\
		&\leq 8\mathbf{E}\bigg(\sum\limits_{n=0}^k\bigg\|\int_{t_n}^{t_{n+1}}(-\Delta)^{\frac{1}{2}}\Bu(s)ds\bigg\|^2+\bigg\|\int_{t_n}^{t_{n+1}} (-\Delta)^{-\frac{1}{2}}[\Bv(s)\cdot\nabla]\Bu(s)ds\bigg\|^2\bigg)^2\notag\\
		&\leq 8\mathbf{E}\bigg(\tau\sum\limits_{n=0}^k\int_{t_n}^{t_{n+1}}(\|(-\Delta)^{\frac{1}{2}}\Bu(s)\|^2+{ C}\|(-\Delta)^{\frac{1}{2}}\Bv(s)\|^2\|(-\Delta)^{\frac{1}{2}}\Bu(s)\|^2)ds\bigg)^2\notag\\
	\end{align*}
	\begin{align}
		&\leq 8\tau^2\mathbf{E}\bigg(\int_0^{t_{k+1}} (\|(-\Delta)^{\frac{1}{2}}\Bu(s)\|^2ds+{ C}\|(-\Delta)^{\frac{1}{2}}\Bu(s)\|^2\|(-\Delta)^{\frac{1}{2}}\Bv(s)\|^2)ds\bigg)^2\notag\\
		&\leq 8\tau^2 \mathbf{E}[\sup\limits_t({ C}\|(-\Delta)^{\frac{1}{2}}\Bv(t)\|^2+1)]\mathbf{E}\bigg(\int_0^{t_{k+1}}\|(-\Delta)^{\frac{1}{2}}\Bu(s)\|^2\bigg)^2\notag\\
		&\leq 8\tau^2 \mathbf{E}[\sup\limits_t(\|(-\Delta)^{\frac{1}{2}}\Bv(t)\|^2+1)]\mathbf{E}\exp\bigg(\int_0^{t_{k+1}}\|(-\Delta)^{\frac{1}{2}}\Bu(s)\|^2ds\bigg)\notag\\
		&\leq C\tau^2. 
	\end{align}
	
	We collect estimates of {$A_1$-$A_{12}$}, and substitute them into \eqref{eq:maineu} to have
	\begin{align}
		&\mathbf{E}\|{e}_\Bu^{k+1}\|^2+\mathbf{E}|e_\xi^{k+1}|^2+\mathbf{E}|e_\eta^{k+1}|^2\notag\\
		&\leq C\bigg(\tau^{2\theta}+\tau\sum\limits_{n=0}^k(\mathbf{E}\|e_\Bu^n\|^2+\mathbf{E}|e_\eta^n|^2)\bigg)+\frac{1}{2}\max\limits_{0\leq n\leq k}\mathbf{E}\|e_\Bu^n\|^2.
	\end{align}
	Taking $\max$  on the left hand side and cancelling similar terms on the right hand side, we obtain
	\begin{align}
		&\max\limits_{0\leq n\leq k}[\mathbf{E}\|{e}_\Bu^{n+1}\|^2+\mathbf{E}|e_\xi^{n+1}|^2+\mathbf{E}|e_\eta^{n+1}|^2]\notag\\
		&\leq C\bigg(\tau^{2\theta}+\tau\sum\limits_{n=0}^k(\mathbf{E}\|e_\Bu^n\|^2+\mathbf{E}|e_\eta^n|^2)\bigg).
	\end{align}
	Then, the discrete Gronwall's inequality implies the result. 
\end{proof}
\begin{rem}
	From the estimate of $A_{12}$, it is evident that the mean-reverting term $g(\Bu^n)\delta W^n$ in \eqref{eq:SAV4} is crucial, as it allows for the derivation of the first two lines of \eqref{eq:a12} using \eqref{eq:ut1}. See also Remark \ref{rem1}. 
\end{rem}

\section{Numerical experiments}
In this section, two numerical examples are provided for illustration of our main theoretical result \eqref{eq:final}. 

\subsection{Accuracy test on homogeneous Dirichlet boundary condition} In this test, we shall freeze $T=0.2$, and take $g(\Bu)=I$ and $\text{diag}(2I-{\cos(\Bu)})$ respectively in \eqref{model:NS}. The initial condition is chosen as
\begin{equation}
	\Bu_0({ \boldsymbol{x}})=
	\begin{pmatrix}
		-128 x_1^2(x_1-1)^2 x_2(x_2-1)(2 x_2-1) \\
		128 x_2^2(x_2-1)^2 x_1(x_1-1)(2 x_1-1)
	\end{pmatrix}
\end{equation}
and the Q-Wiener process is approximated as
\begin{align}
	W(t)\approx\sum\limits_{j_1=1}^4\sum\limits_{j_2=1}^4 \frac{1}{(j_1+j_2)^{1+\frac{\epsilon}{2}}} \Be_{j_1j_2}(\Bx)\beta_{j_1j_2}(t),
\end{align}
with $\epsilon=1e-4$
and 
$
\Be_{j_1, j_2}(\Bx)=\left(\sin \left(j_1 \pi x_1\right) \sin \left(j_2 \pi x_2\right), \sin \left(j_1 \pi x_1\right) \sin \left(j_2 \pi x_2\right)\right)^T.
$

We employ the Fourier spectral method with $40$ modes in each spatial direction, ensuring that the spatial discretization error remains negligible. Since the exact solution is unavailable, we use the numerical solution with $\tau_0 = 1/12800$ as a reference surrogate. The temporal errors of the velocity and pressure at time $T$ are then approximated as
\begin{align}\label{numer:err}
	e_\Bu^n &:= \sqrt{\mathbf{E}\|\Bu(T,\cdot)-\Bu^n\|^2}\approx\sqrt{\frac{1}{300}\sum_{\ell=1}^{300} \|\Bu^n(\omega_\ell)-\Bu(T,\omega_\ell)\|^2}, \notag\\
	e_{p}^n &:= \left(\mathbf{E}\left[\left\|\int_0^T p(s)\mathrm{d}s - \tau \sum_{k=1}^{n} p^k\right\|^2\right]\right)^{1/2}
	\approx\left(\frac{1}{300} \sum_{\ell=1}^{300}\left\|\tau_0 \sum_{k=1}^{T/\tau_0} p(k\tau_0,\omega_\ell)-\tau\sum_{k=1}^{T/\tau} p^k(\omega_\ell)\right\|^2\right)^{1/2},
\end{align}
where $T = n\tau$. The definition of $e_p^n$ follows the measurement introduced in \cite{FengV22}. Under these settings, the numerical results for both choices of $g(\Bu)$ are summarized in Table~\ref{table1}.

It is evident that, in both cases, the temporal convergence rate of the velocity slightly surpasses $\mathcal{O}(\sqrt{\tau})$ whereas the pressure error demonstrates a convergence rate slightly below, or at best comparable to $\mathcal{O}(\sqrt{\tau})$. This behavior may stem from the inaccuracy of our so-called ``exact" solution and its resonance with numerical solutions computed at different step sizes $\tau$.  
\renewcommand{\arraystretch}{1.5}
\begin{table}
	\centering
	\begin{tabular}{c|c|c|c|c|c|c|c|c}
		\hline \multirow{2}{*}{$\tau$} & \multicolumn{4}{|c|}{$g(\Bu)=1$ } & \multicolumn{4}{c}{{ $g(\Bu)=2-\cos(\Bu)$} } \\
		\cline { 2 - 9 } & $e_\Bu^n$ & order &$e_p^n$ &order& $e_\Bu^n$ & order &$e_p^n$& order\\
		\hline $\frac{1}{200}$  & { $1.16\mathrm{e}-02$}  & { -} & $6.18\mathrm{e}-02$ & { -} &    $1.21\mathrm{e}-02$   & { -} & $6.23\mathrm{e}-02$ & { -} \\ 
		$\frac{1}{400}$  &   $7.60 \mathrm{e}-03$  &  0.61 &$4.24\mathrm{e}-02$ & 0.54 &   $7.67\mathrm{e}-03$  & { 0.66} & $4.26\mathrm{e}-02$& {0.55}\\ 
		$\frac{1}{800}$  &  $4.97 \mathrm{e}-03$  &  0.61  &$2.99\mathrm{e}-02$ & 0.50&  $4.86\mathrm{e}-03$ &   { 0.66}  & $2.95\mathrm{e}-02$& { 0.53}\\ 
		$\frac{1}{1600}$  & $3.03 \mathrm{e}-03$  & 0.71  &$2.15\mathrm{e}-02$ & 0.48&  $3.09\mathrm{e}-03$ &   { 0.65}  & $2.14\mathrm{e}-02$& {0.47}\\ 
		$\frac{1}{3200}$  &  $1.96 \mathrm{e}-03$  &  0.63 &$1.51\mathrm{e}-02$ & 0.51&  $1.98\mathrm{e}-03$ &   { 0.64}   &$1.49\mathrm{e}-02$ & {0.52}\\
		\hline
	\end{tabular}
	\captionsetup{width=0.9\hsize}
	\caption{Convergence order of $\Bu$ and $p$ of the proposed method for both choices of $g$.}\label{table1}
\end{table}

Additionally, both auxiliary processes, 
$\xi$ and $\eta$ evolve with mean 1 and small variances, as shown in Figs. \ref{fig3} and \ref{fig4}. For brevity, we present the results at 
$T=0.2$ with $
g(\Bu)=1$ for different time steps. Similar patterns are observed in the other case.
\begin{figure}[htbp]
	\centering
	\includegraphics[width=0.82\textwidth]{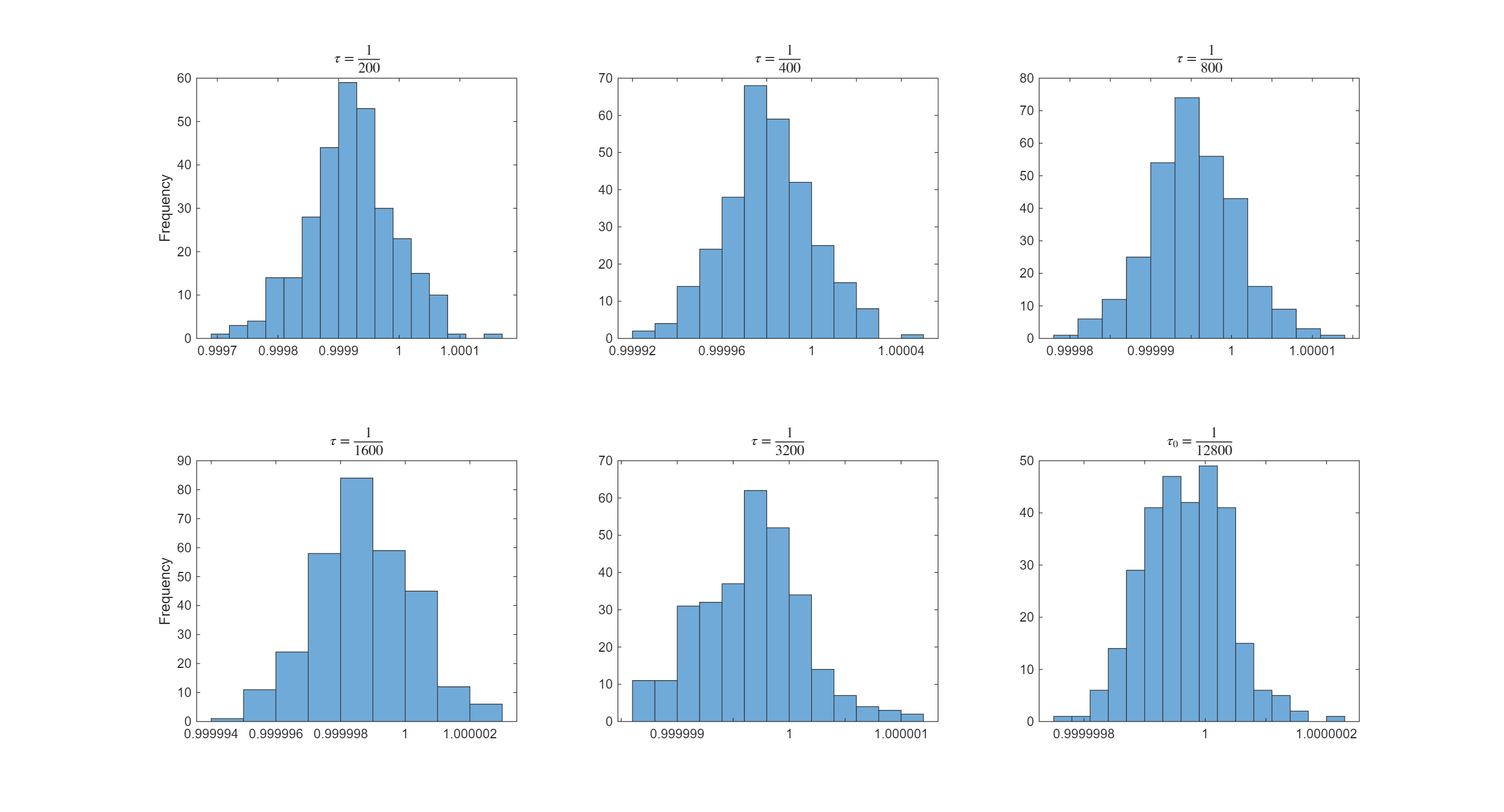}\vskip -0.2in
	\caption{Distribution of $\xi$ at $T$ with $g(\Bu)=1$ for various timesteps.} 
	\label{fig3}
\end{figure}

\begin{figure}[htbp]
	\centering
	\includegraphics[width=0.82\textwidth]{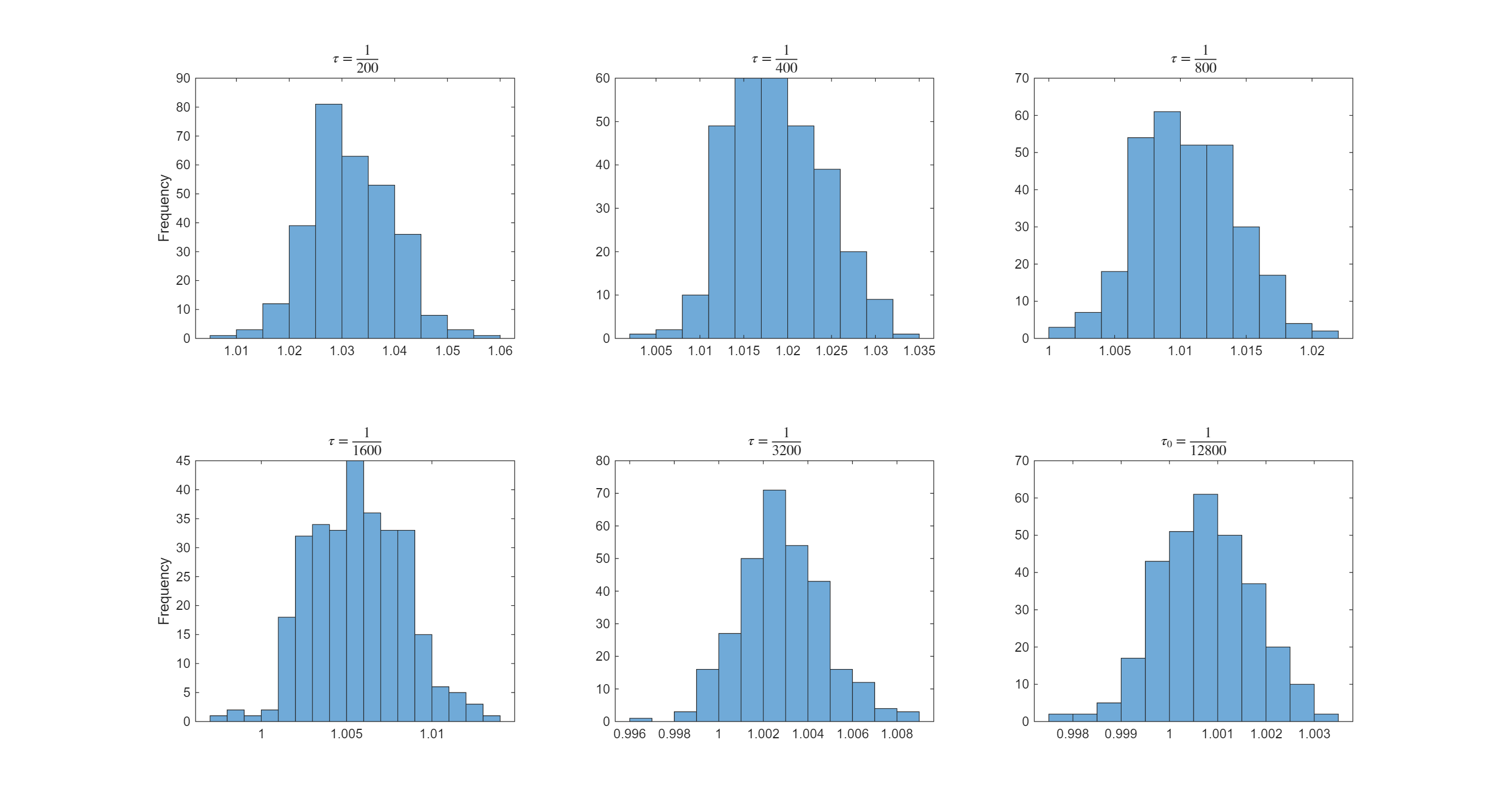}\vskip -0.2in
	\caption{Distribution of $\eta$ at $T$ with $g(\Bu)=1$ for various timesteps.} 
	\label{fig4}
\end{figure}

\subsection{Stochastic shear layer roll-up problem}
In this subsection, we study the shear layer roll-up flow perturbed by noise on the domain $[0,1]^2$, with doubly periodic-boundary condition and the following initial condition:
\begin{equation}
	\Bu_0({ \boldsymbol{x}})=
	\begin{pmatrix}
		\begin{cases}
			\tanh(30(x_2-0.25)), \ \ \text{if} \ x_2\leq 0.5\\
			\tanh(30(0.75-x_2)),\ \ \text{if}\ x_2>0.5,
		\end{cases}\\
		0.05\sin(2\pi x_1)
	\end{pmatrix}.
\end{equation}
It is worth noting that, in both our numerical analysis and the preceding numerical experiment, the effect of the Reynolds number was suppressed by fixing it to $1$. In the present test, however, we set the Reynolds number to $10^4$, which corresponds to replacing $\Delta \Bu$ with $(1\mathrm{e}{-4})\Delta \Bu$ in \eqref{model:NS}. Moreover, we set $\epsilon=1e-3$ in $W(t)$, and
use 128 Fourier modes for space discretization. 

We compute the mean of $300$ independent realizations at time points $t=0,0.4,0.8,1.2$, and present the evolution of vorticity contours in Fig.~\ref{fig2}. The top row corresponds to the deterministic case, where the shear layer gradually rolls up, forming spiral structures that evolve into a fully developed vortex. The bottom row shows the same initial configuration under additive noise $g(\Bu)=0.1$. Here, the noise significantly alters the dynamics: the rolling-up process is disrupted, and a vortex forms without the characteristic spirals, highlighting the intricate influence of stochastic perturbations.

%As indicated in \cite{FischerM01, Xu06}, the 
%\begin{figure}[ht]
%\centering
% %\vskip -1.5in
%\resizebox{140mm}{90mm}{\includegraphics{Shear}} %\vskip -1.5in
%\caption{Time evolution of vorticity contours at $t=0, 0.4, 0.8,1.2$. The top row: deterministic case. The bottom row: stochastic case with $g(\Bu)=0.3$.} \label{fig2}
%\end{figure}

\begin{figure*}[htbp]
	\begin{minipage}[t]{0.24\linewidth}
		\centerline{\includegraphics[scale=0.19]{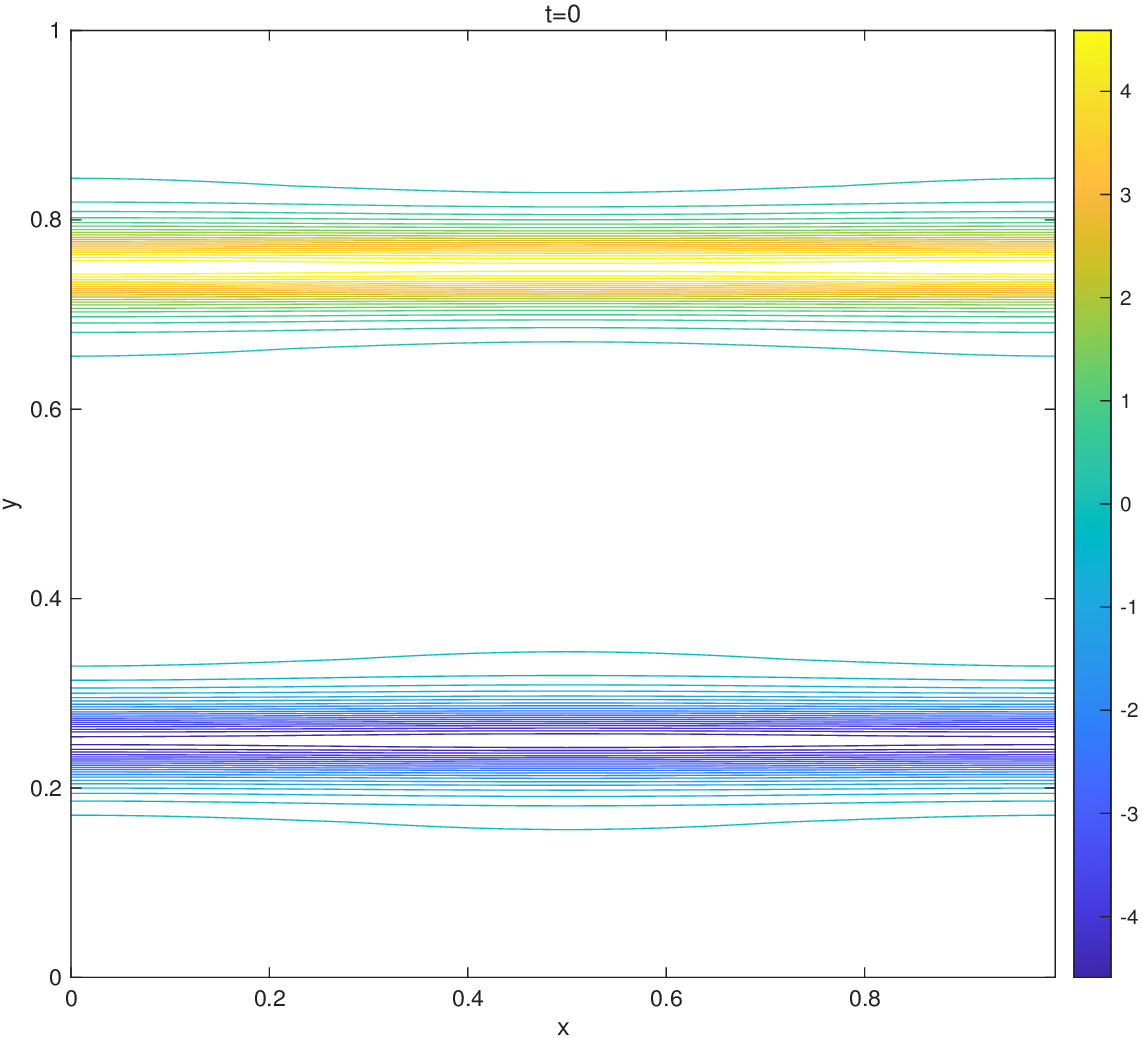}}
	\end{minipage}
	\begin{minipage}[t]{0.24\linewidth}
		\centerline{\includegraphics[scale=0.19]{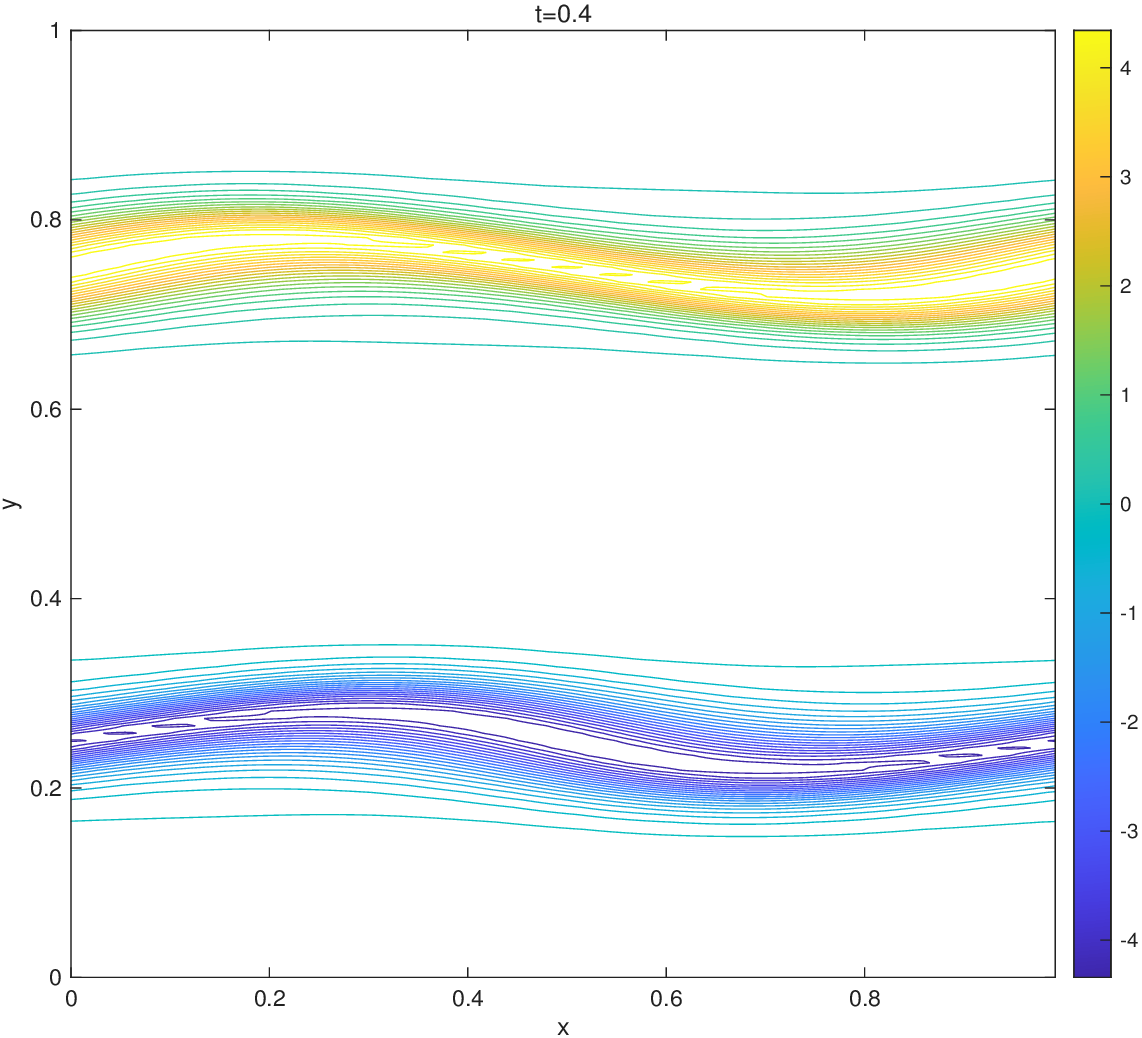}}
	\end{minipage}
	\begin{minipage}[t]{0.24\linewidth}
		\centerline{\includegraphics[scale=0.19]{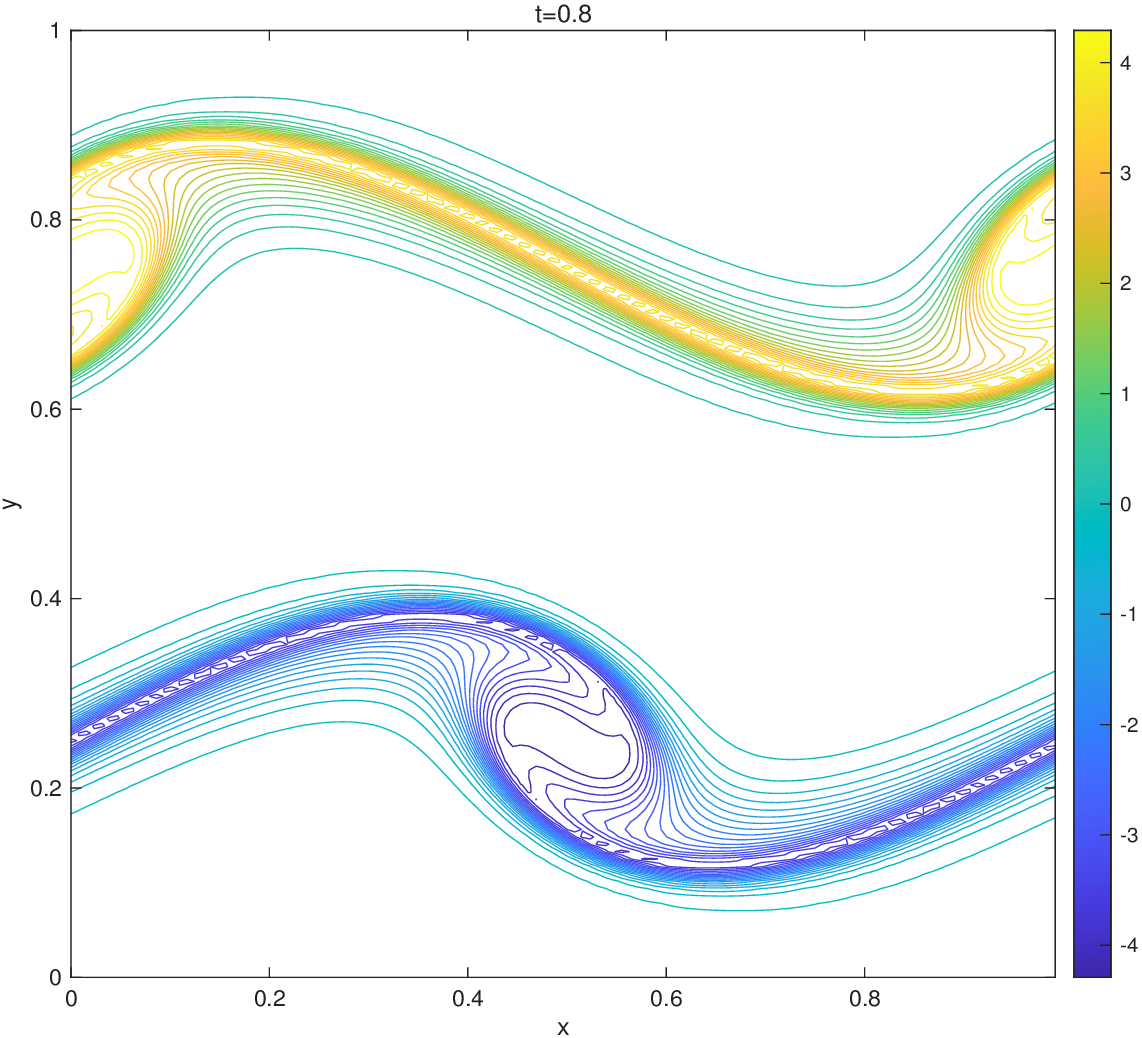}}
	\end{minipage}
	\begin{minipage}[t]{0.24\linewidth}
		\centerline{\includegraphics[scale=0.19]{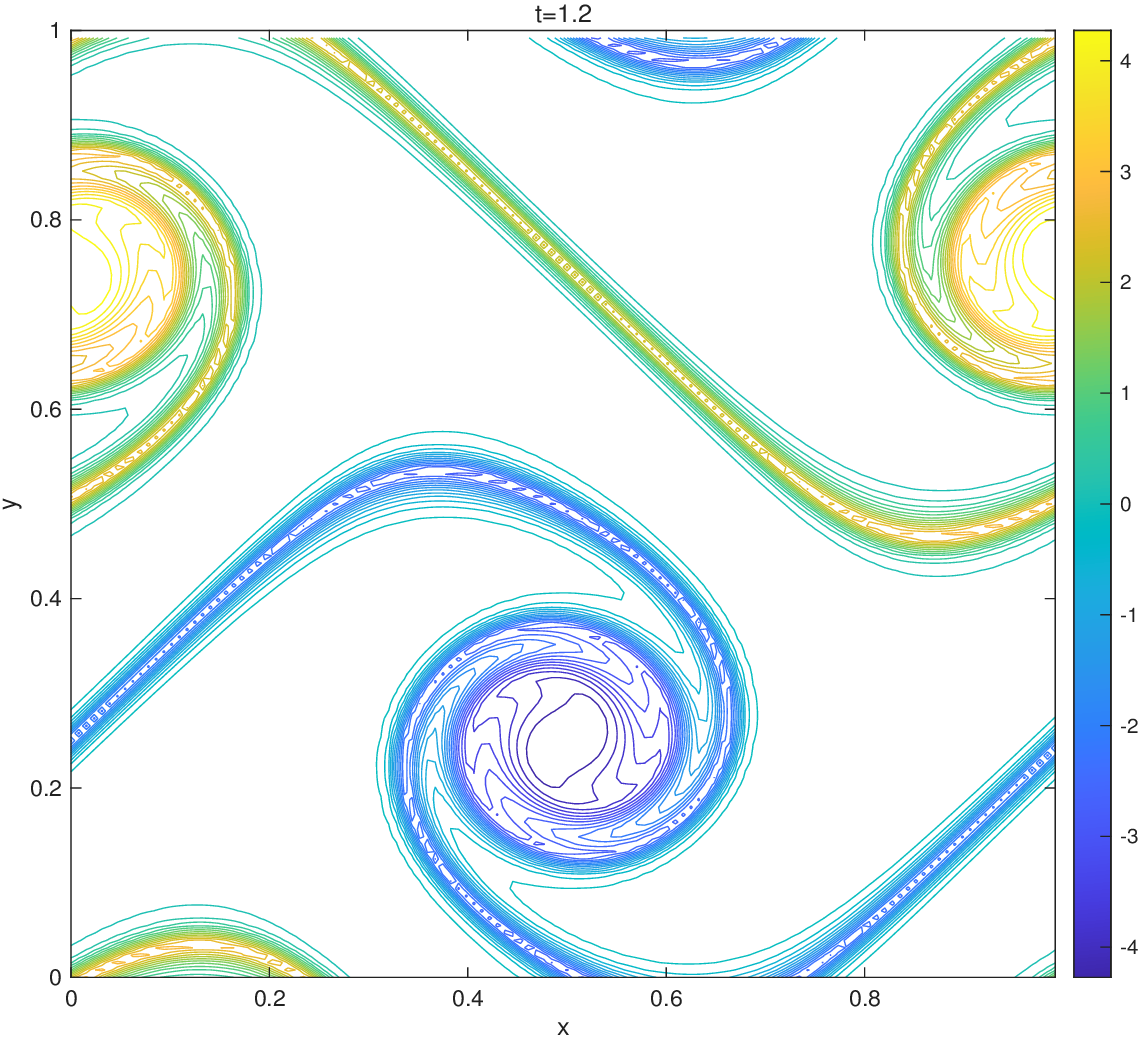}}
	\end{minipage}
	\vskip 3mm
	\begin{minipage}[t]{0.24\linewidth}
		\centerline{\includegraphics[scale=0.19]{vor_t0.eps}}
	\end{minipage}
	\begin{minipage}[t]{0.24\linewidth}
		\centerline{\includegraphics[scale=0.19]{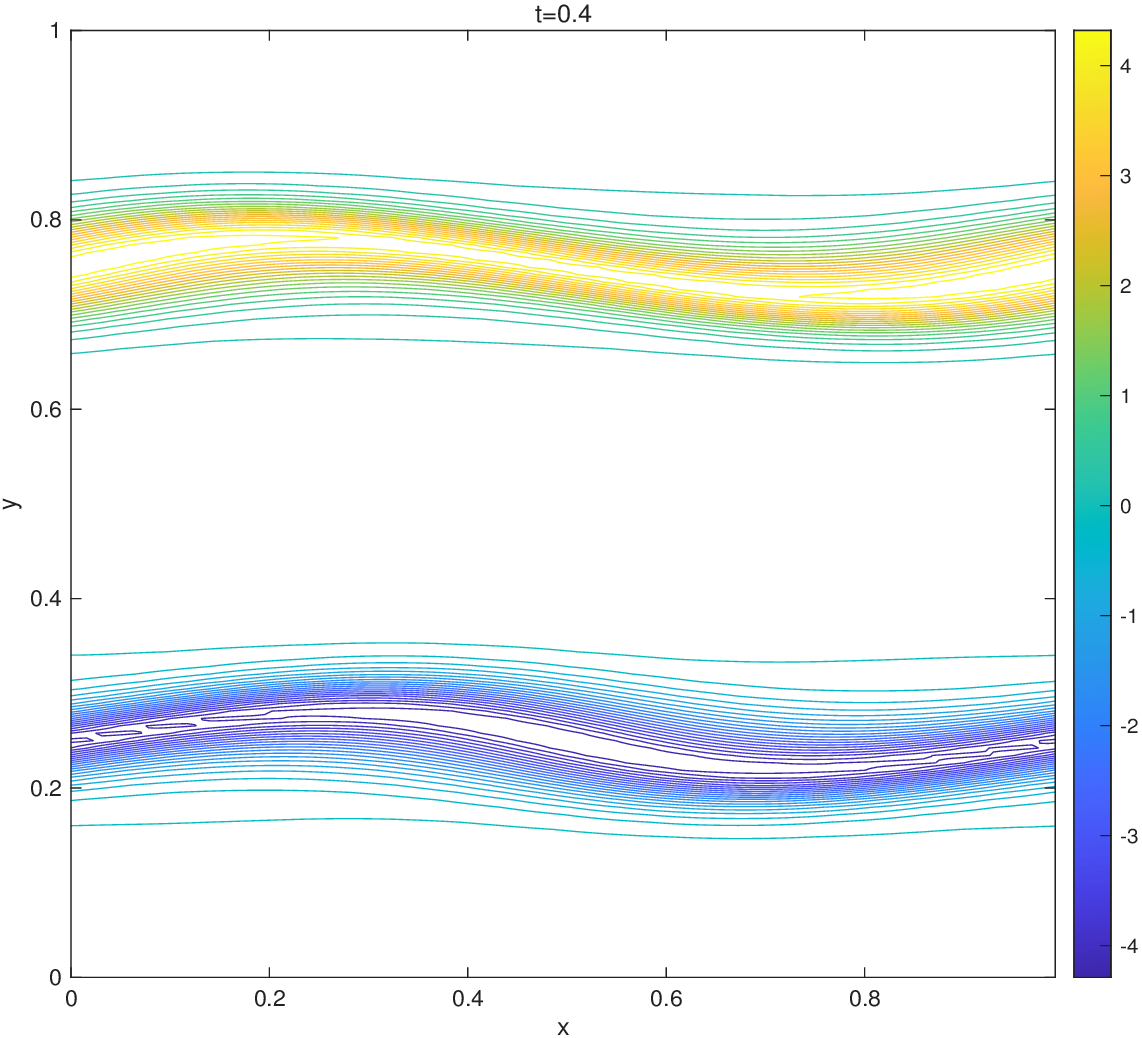}}
	\end{minipage}
	\begin{minipage}[t]{0.24\linewidth}
		\centerline{\includegraphics[scale=0.19]{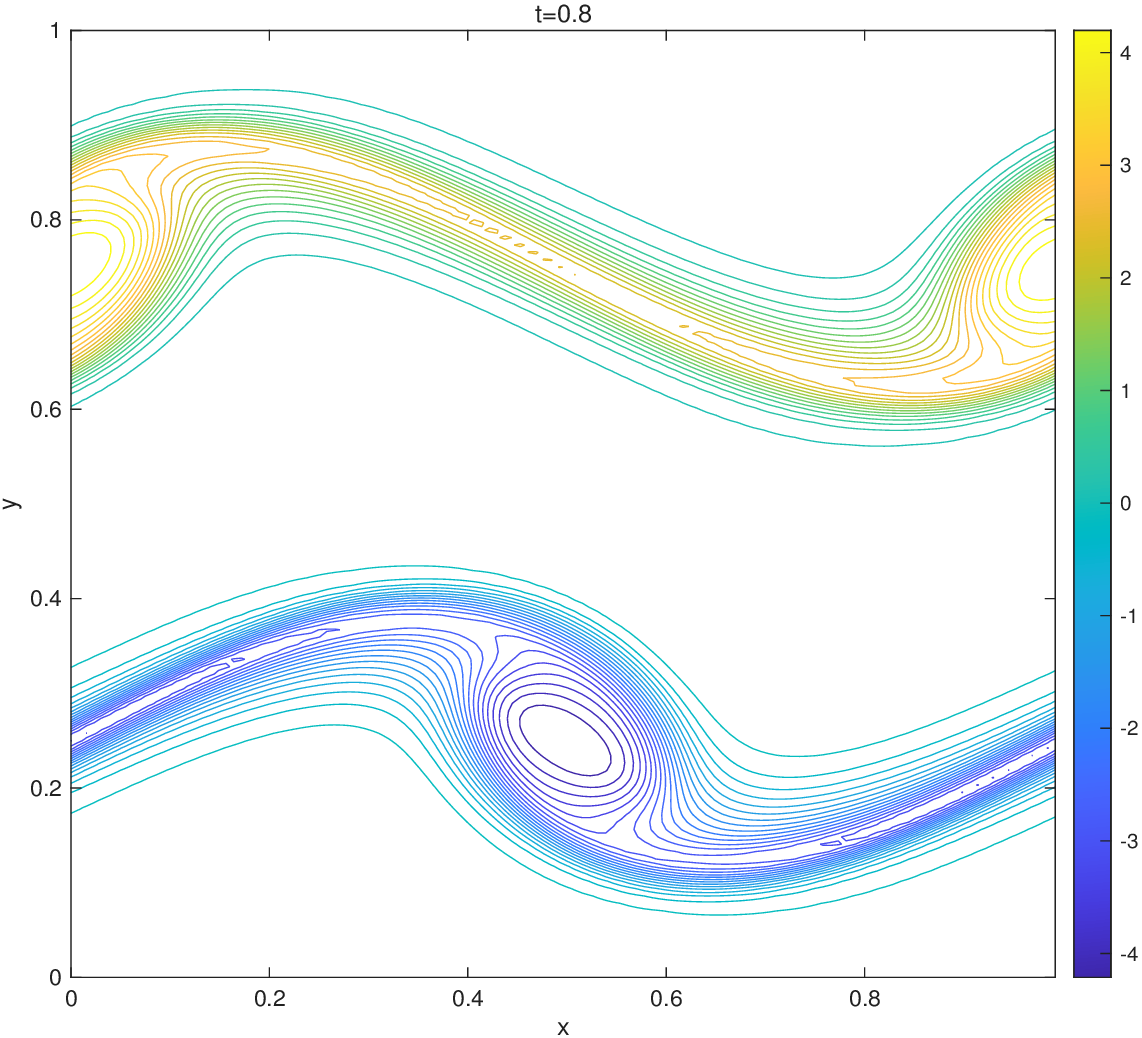}}
	\end{minipage}
	\begin{minipage}[t]{0.24\linewidth}
		\centerline{\includegraphics[scale=0.19]{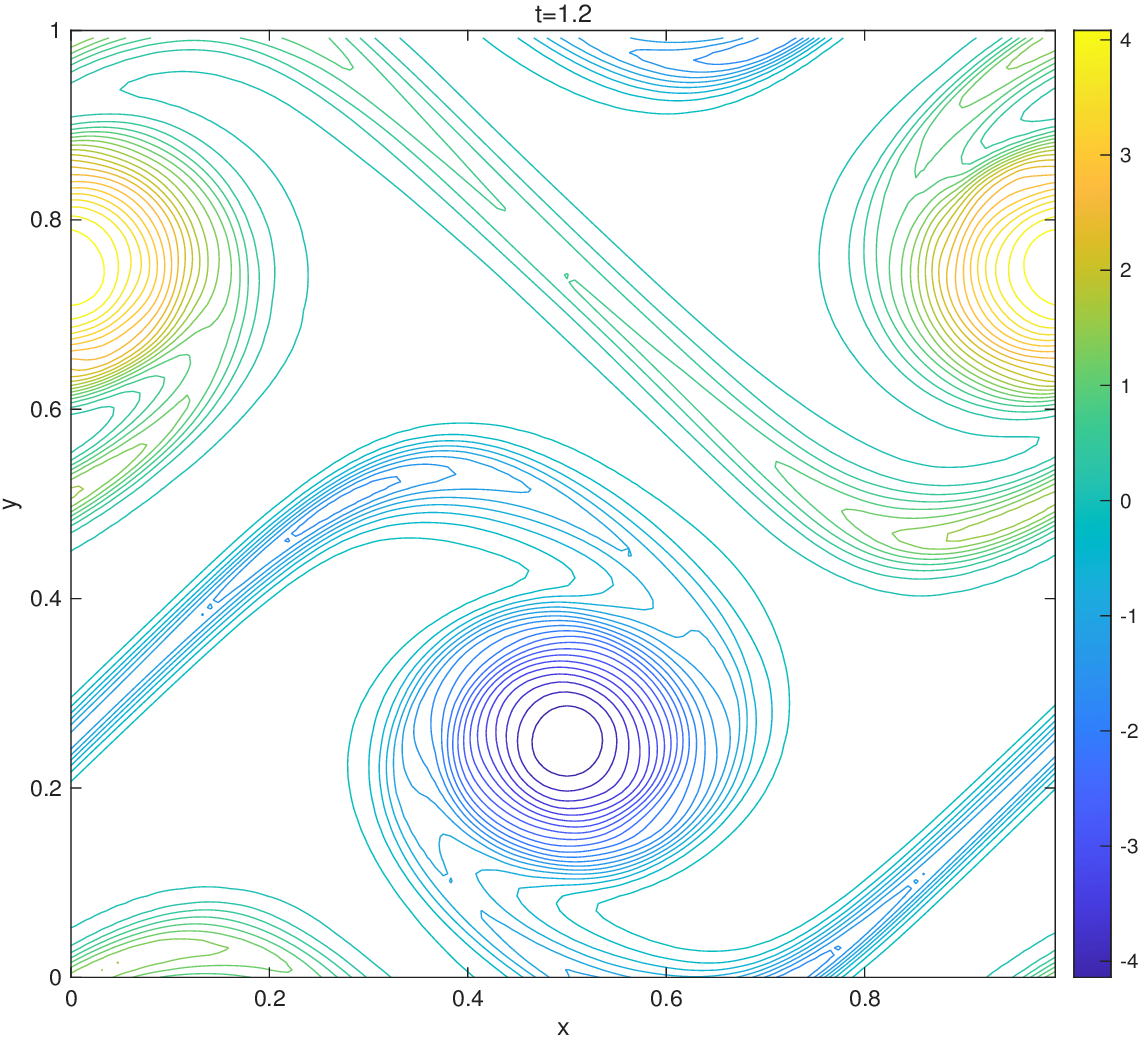}}
	\end{minipage}
	\caption{Time evolution of vorticity contours at $t=0, 0.4, 0.8,1.2$. The top row: deterministic case. The bottom row: stochastic case with $g(\Bu)={ 0.1}$.
	}\label{fig2}
\end{figure*}

\section{Concluding remarks}
We have proposed an efficient pressure-correction scheme based on auxiliary variables for the stochastic Navier-Stokes equations under Dirichlet boundary condition. This approach generalizes the periodic framework considered in \cite{CarelliP12}. The proposed scheme is linear, decoupled, and fully explicit, yet maintains unconditional stability, with an adaptive mean-reverting mechanism naturally embedded in the auxiliary variables. To the best of our knowledge, this is the first work on stochastic Navier-Stokes equations that treats the nonlinear term explicitly while preserving unconditional stability.

Our convergence analysis is currently confined to the linearized stochastic system \eqref{model:NS1}. This limitation arises because the quadrilinear term $\xi^{n+1}([\Bu^n\cdot\nabla]\Bu^n, \tilde{\Bu}^{n+1})$ introduced in the discretization of \eqref{model:NS} couples the auxiliary variable $\xi$ with the velocity $\Bu$, rendering the analysis substantially more challenging. By contrast, fully implicit or semi-linearized schemes involve only trilinear terms. Overcoming this difficulty will likely require localization techniques similar to those employed in \cite{CarelliP12}, and we leave this challenge for future investigation.

\bibliographystyle{plain}
\bibliography{Ref}

\end{document}